\newlength\dlf
	\def\captionfont{\setb@se{11pt}\protect\footnotesize}
    \def\captionfont{\protect\footnotesize}
    \newcommand{\iprd}[2]{\left( #1 , #2 \right)}
    \newcommand{\Tauh}{\mathcal{T}_h}
    \newcommand{\aiprd}[2]{a\left( #1 , #2 \right)}
    \newcommand{\bform}[3]{b\left( #1 , #2, #3 \right)}
    \newcommand{\Bform}[3]{B\left( #1 , #2, #3 \right)}
    \newcommand{\cform}[2]{c\left( #1 , #2 \right)}
    \newcommand{\Soh}{\mathring{S}_h}
    \newcommand{\Xh}{{\bf X}_h}
    \newcommand{\Vh}{{\bf V}_h}
	\def\norm#1#2{\left\| #1 \right\|_{#2}}
	\newcommand{\dtau}{\delta_\tau}
	\newcommand{\ddtau}{\delta_{\tau}^2}
	\newcommand{\phih}{\phi_h}
	\newcommand{\phitil}{\tilde{\phi}_h}
	\newcommand{\phich}{\check{\phi}_h}
	\newcommand{\bphih}{\bar{\phi}_h}
	\newcommand{\bbuh}{\bar{\bf u}_h}
	\newcommand{\bu}{{\bf u}}
	\newcommand{\bv}{{\bf v}}
	\newcommand{\bw}{{\bf w}}
	\newcommand{\buh}{{\bf u}_h}
	\newcommand{\butil}{\tilde{\bu}_h}
	\newcommand{\muh}{\mu_h}
	\newcommand{\hf}{\frac{1}{2}}
	\newcommand{\eA}{\mathcal{E}_a}
	\newcommand{\eh}{\mathcal{E}_h}
	\newcommand{\e}{\mathcal{E}}
	\newcommand{\eAphi}{\mathcal{E}_a^{\phi}}
	\newcommand{\ehphi}{\mathcal{E}_h^{\phi}}
	\newcommand{\ephi}{\mathcal{E}^{\phi}}
	\newtheorem{thm}{Theorem}[section]
	\newtheorem{prop}[thm]{Proposition}
	\newtheorem{lem}[thm]{Lemma}
	\newtheorem{rem}[thm]{Remark}
	\newtheorem{asmp}[thm]{Assumption}
\begin{document}
\title{Convergence Analysis and Error Estimates for a Second Order Accurate Finite Element Method for the Cahn-Hilliard-Navier-Stokes System}

	\author{
Amanda E. Diegel\thanks{Department of Mathematics, Louisiana State University, Baton Rouge, LA 70803 (adiegel@lsu.edu)} 
	\and
Cheng Wang\thanks{Department of Mathematics, The University of Massachusetts, North Dartmouth, MA 02747 (cwang1@umassd.edu)} 
	\and
Xiaoming Wang\thanks{Department of Mathematics, Florida State University, Tallahassee, FL 32306 (wxm@math.fsu.edu)}
	\and
Steven M. Wise\thanks{Department of Mathematics, The University of Tennessee, Knoxville, TN 37996 (swise@math.utk.edu)}}

	\maketitle
	
	\numberwithin{equation}{section}
	
\begin{abstract}
In this paper, we present a novel second order in time mixed finite element scheme for the Cahn-Hilliard-Navier-Stokes equations with matched densities. The scheme combines a standard second order Crank-Nicholson method for the Navier-Stokes equations and a modification to the Crank-Nicholson method for the Cahn-Hilliard equation. In particular, a second order Adams-Bashforth extrapolation and a trapezoidal rule are included to help preserve the energy stability natural to the Cahn-Hilliard equation. We show that our scheme is unconditionally energy stable with respect to a modification of the continuous free energy of the PDE system. Specifically, the discrete phase variable is shown to be bounded in $\ell^\infty \left(0,T;L^\infty\right)$ and the discrete chemical potential bounded in $\ell^\infty \left(0,T;L^2\right)$, for any time and space step sizes, in two and three dimensions, and for any finite final time $T$. We subsequently prove that these variables along with the fluid velocity converge with optimal rates in the appropriate energy norms in both two and three dimensions.
	\end{abstract}
	
{\bf Keywords:} Cahn-Hilliard equation, Navier-Stokes, mixed finite element methods, convex splitting, energy stability, error estimates, second order

\medskip

{\bf AMS subject classifications.} 35K35 35K55 65M12 65M60

	\section{Introduction}
In this paper, we prove error estimates for a fully discrete, second order in time, finite element method for the Cahn-Hilliard-Navier-Stokes (\emph{CHNS}) model for two-phase flow. Let $\Omega\subset \mathbb{R}^d$, $d=2,3$, be an open polygonal or polyhedral domain. For all $\phi \in H^1(\Omega), \bu \in {\bf L}^2(\Omega)$, consider the energy 
	\begin{align}
E(\phi, \bu) = \int_{\Omega} \left\{\frac{1}{4\varepsilon} \left(\phi^2 - 1\right)^2 + \frac{\varepsilon}{2} |\nabla \phi|^2 + \frac{1}{2\gamma} |\bu |^2 \right\} d\bf{x},
	\label{eq:Ginzburg-Landau-energy-modified}
	\end{align}
where $\phi$ represents a concentration field, $\bu$ represents fluid velocity, and $\varepsilon$ is a positive constant. The \emph{CHNS} system is a gradient flow of this energy~\cite{gurtin96, han15, hohenberg77, kay08}:
	\begin{subequations}
	\begin{align}
\partial_t \phi + \nabla \phi \cdot \bu = \varepsilon \nabla \cdot \left(M(\phi) \nabla \mu\right), &\quad \text{in} \,\Omega_T ,
	\label{eq:chns-mixed-a}
	\\
\mu = \varepsilon^{-1}\left(\phi^3-\phi\right)-\varepsilon \Delta \phi, &\quad \text{in} \,\Omega_T,   
	\label{eq:chns-mixed-b}
	\\
\partial_t \bu - \eta \Delta \bu + \bu \cdot \nabla \bu + \nabla p =  \gamma \mu \nabla \phi, &\quad \text{in} \, \Omega_T,
	\label{eq:chns-mixed-c}
	\\
\nabla \cdot \bu = 0, & \quad \text{in} \, \Omega_T,
	\label{eq:chns-mixed-d}
	\\
\partial_n \phi =\partial_n \mu = 0, \bu = {\bf 0} &\quad \text{on} \, \partial \Omega\times (0,T) ,
	\label{eq:chns-mixed-e}
	\end{align}
	\end{subequations}
where $M(\phi)>0$ is a mobility, $\eta = \frac{1}{Re}$ where $Re$ is the Reynolds number, $\gamma = \frac{1}{We^*}$ where $We^*$ is the modified Weber number that measures relative strengths of kenetic and surface energies, and $\mu$ is the chemical potential:
	\begin{align}
\mu := \delta_\phi E = \frac{1}{\varepsilon} \left(\phi^3 - \phi\right) - \varepsilon \Delta \phi.
	\end{align} 
Here $\delta_\phi E$ denotes the variational derivative of \eqref{eq:Ginzburg-Landau-energy-modified} with respect to $\phi$. The equilibria are the pure phases $\phi = \pm 1$. The boundary conditions are of local thermodynamic equilibrium and no-flux/no-flow/no-slip type. 

A weak formulation of \eqref{eq:chns-mixed-a} -- \eqref{eq:chns-mixed-e} may be written as follows: find $(\phi,\mu,\bu,p)$ such that
	\begin{subequations}
	\begin{align}
\phi \in& \, L^\infty\left(0,T;H^1(\Omega)\right)\cap  L^4\left(0,T;L^\infty(\Omega)\right),
	\\
\partial_t \phi \in& \, L^2 \left(0,T; H^{-1}(\Omega)\right), 
	\\
\mu \in& \, L^2 \left(0,T;H^1(\Omega)\right),
	\\
\bu \in& \, L^2 \left(0,T;{\bf H}^1_0(\Omega)\right) \cap L^\infty\left(0,T; {\bf L}^2(\Omega)\right),
	\\
\partial_t \bu \in& \, L^2 \left(0,T;{\bf H}^{-1}(\Omega)\right),
	\\
p \in& \, L^2\left(0,T;L^2_0(\Omega)\right),
	\end{align}
	\end{subequations}
and there hold for almost all $t\in (0,T)$ 
	\begin{subequations}
	\begin{align}
\langle \partial_t \phi ,\nu \rangle + \varepsilon \,\aiprd{\mu}{\nu} + \bform{\phi}{\bu}{\nu} &= 0, \quad \forall \,\nu \in H^1(\Omega),
	\label{eq:weak-chns-a} 
	\\
\iprd{\mu}{\psi}-\varepsilon \,\aiprd{\phi}{\psi} - \varepsilon^{-1}\iprd{\phi^3-\phi}{\psi}  &= 0, \quad \forall \,\psi\in H^1(\Omega),
	\label{eq:weak-chns-b}
	\\
\langle \partial_t \bu, \bv \rangle + \eta \, \aiprd{\bu}{\bv} + \Bform{\bu}{\bu}{\bv} - \cform{\bv}{p} - \gamma \, \bform{\phi}{\bv}{\mu} &=0, \quad \forall \, \bv \in {\bf H}^1_0(\Omega),
	\label{eq:weak-chns-c}
	\\
\cform{\bu}{q} &=0, \quad \forall \, q \in L^2_0(\Omega),
	\label{eq:weak-chns-d}
	\end{align}
	\end{subequations}
where
	\begin{align}
\aiprd{u}{v} := \iprd{\nabla u}{\nabla v}, \quad \bform{\psi}{\bv}{\nu} := \iprd{\nabla \psi \cdot \bv}{\nu}, 
	\\
\cform{\bv}{q} := \iprd{\nabla \cdot \bv}{q},\quad \Bform{\bu}{\bv}{{\bf w}} := \hf \left[\iprd{\bu \cdot \nabla \bv}{{\bf w}} - \iprd{\bu \cdot \nabla {\bf w}}{\bv} \right],
	\end{align}
with the ``compatible" initial data
	\begin{align}
\phi(0) &= \phi_0 \in H^2_N(\Omega) := \left\{v\in H^2(\Omega) \,\middle| \,\partial_n v = 0 \,\mbox{on} \,\partial\Omega \right\},
	\nonumber
	\\
\bu(0) &= \bu_0 \in {\bf V} := \left\{\bv \in {\bf H}^1_0(\Omega) | \iprd{\nabla \cdot \bv}{q} = 0, \forall q \in L^2_0(\Omega)\right\},
	\end{align}
and we have taken $M(\phi) \equiv 1$ for simplicity. Observe that the homogeneous Neumann boundary conditions associated with the phase variables $\phi$ and $\mu$ are natural in this mixed weak formulation of the problem. We define the space $L^2_0$ as the subspace of functions of $L^2$ that have mean zero. Furthermore, we state the following definitions of which the first is non-standard: $H^{-1}(\Omega) := \left(H^1(\Omega)\right)^*$, ${\bf H}_0^1(\Omega):= \left[H_0^1(\Omega)\right]^d$, ${\bf H}^{-1}(\Omega) := \left({\bf H}_0^1(\Omega)\right)^*$, and $\langle  \, \cdot \, , \, \cdot \, \rangle$ as the duality paring between $H^{-1}$ and $H^1$ in the first instance  and the duality paring between ${\bf H}^{-1}(\Omega)$ and $\left({\bf H}_0^1(\Omega)\right)^*$ in the second. The notation $\Phi(t) := \Phi(\, \cdot \, , t)\in X$ views a spatiotemporal function as a map from the time interval $[0,T]$ into an appropriate Banach space, $X$.  We use the standard notation for function space norms and inner products. In particular, we let $\norm{u}{}:=\norm{u}{L^2}$ and $\iprd{u}{v} := \iprd{u}{v}_{L^2}$, for all $u,v\in L^2(\Omega)$.

The existence of weak solutions to \eqref{eq:weak-chns-a} -- \eqref{eq:weak-chns-d} is well known.  See, for example,~\cite{liu03}. It is likewise straightforward to show that weak solutions of \eqref{eq:weak-chns-a} -- \eqref{eq:weak-chns-d} dissipate the energy \eqref{eq:Ginzburg-Landau-energy-modified}. In other words, \eqref{eq:chns-mixed-a} -- \eqref{eq:chns-mixed-e} is a mass-conservative gradient flow with respect to the energy \eqref{eq:Ginzburg-Landau-energy-modified}. Precisely, for any $t\in[0,T]$, we have the energy law
	\begin{equation}
E(\phi(t), \bu(t)) +\int_0^t \varepsilon\norm{\nabla\mu(s)}{}^2 + \frac{\eta}{\gamma} \norm{\nabla \bu(s)}{}^2 ds = E(\phi_0, \bu_0) ,
	\label{eq:pde-energy-law}
	\end{equation}
and where mass conservation (for almost every $t\in[0,T]$, $\iprd{\phi(t)-\phi_0}{1} = 0$) of the system \eqref{eq:weak-chns-a} -- \eqref{eq:weak-chns-d} is shown by observing that $\bform{\phi}{\bu}{1} = 0$, for all $\phi\in L^2(\Omega)$ and all $\bu \in {\bf V}$.  

Numerical methods for modeling two-phase flow via phase field approximation has been extensively investigated. See, for example, \cite{chen16a, chen16, elliott11, feng06, feng07b, feng12, grun13, grun14, kay08, kay07, kim04, liu03, liu16, shen10b, shen10a, shen15, wangx12, wangx13, wise10}, and the references therein).   Of the most recent, Shen and Yang~\cite{shen15} proposed two new numerical schemes for the Cahn-Hilliard-Navier-Stokes equations, one based on stabilization and the other based on convex splitting. Their new schemes have the advantage of being totally decoupled, linear, and unconditionally energy stable. Additionally, their schemes are adaptive in time and they provide numerical experiments which suggest that their schemes are at least first order accurate in time. However, no rigorous error analysis was presented. 

Abels~\emph{et.~al.}~\cite{abels12} introduce a thermodynamically consistent generalization to the Cahn-Hilliard-Navier-Stokes model for the case of non-matched densities based on a solenoidal velocity field.  The authors demonstrate that their model satisfies a free energy inequality and conserves mass. The work of Abels~\emph{et.~al.} builds on the pioneering paper of Lowengrub and Truskinovsky~\cite{lowengrub98} who use a mass-concentration formulation of the problem. Perhaps the fundamental difference between the approaches is that the model of Lowengrub and Truskinovsy end up with a velocity field that is not divergence free, in contrast with that of Abels~\emph{et.~al.}. For this reason, and others, developing suitable numerical schemes for the model in~\cite{lowengrub98} is a difficult task, but see the recent work of~\cite{guo14}. Garcke \emph{et.~al.}~\cite{garcke16} present a new time discretization scheme for the numerical simulation for the model in~\cite{abels12}. They show that their scheme satisfies a discrete in time energy law and go on to develop a fully discrete model which preserves that energy law. They are furthermore able to show existence of solutions to both the time discrete and fully discrete schemes. Again, however, no rigorous error analysis is undertaken for either of these schemes. Gr\"{u}n~\emph{et al.}~\cite{grun13,grun14} provide another numerical scheme for the non-matched density model and they carry out an abstract convergence analysis for their scheme. Rigorous error analysis (with, say, optimal order error bounds) for models with non-matched densities seems to be a difficult prospect, but a very interesting line of inquiry for the future.

Most of the papers referenced above present first order accurate in time numerical schemes. Second order in time numerical schemes provide an obvious advantage over first order schemes by decreasing the amount of numerical error. On the other hand, second-order (in time) methods are almost universally more difficult to analyze than first-order methods. A few such methods have been developed in recent years~\cite{chen16, han15, hintermueller13, hua11}. Most notably, Han and Wang~\cite{han15} present a second order in time, uniquely solvable, unconditionally stable numerical scheme for the \emph{CHNS} equations with match density. Their scheme is based on a second order convex splitting methodology for the Cahn-Hilliard equation and pressure-projection for the Navier-Stokes equation. The authors show that the scheme satisfies a modified discrete energy law which mimics the continuous energy law and prove that their scheme is uniquely solvable. However, no rigorous error analysis is presented and stability estimates are restricted to those gleaned from the energy law. The overall scheme is based on the Crank Nicholson time discretization and a second order Adams Bashforth extrapolation. Chen and Shen~\cite{chen16} have very recently refined the scheme of Han and Wang~\cite{han15}.

In this paper, we study a second-order in time mixed finite element scheme for the \emph{CHNS} system of equations with matched densities. The method essentially combines the recently analyzed second-order method for the Cahn-Hilliard equation from~\cite{diegel15b,guo16} and the pioneering second-order (in time) linear, Crank-Nicholson methodology for the Navier-Stokes equations found in~\cite{baker76}. The Cahn-Hilliard scheme from~\cite{diegel15b,guo16} is based on convex splitting and some key modifications of the Crank-Nicholson framework. The mixed finite element version of the scheme was analyzed rigorously in~\cite{diegel15b}.  The scheme herein is coupled, meaning the Cahn-Hilliard and Navier-Stokes must be solved simultaneously. But, the method is almost linear, with only a single weak nonlinearity present from the chemical potential equation. In particular, second order Adams-Bashforth extrapolations are used to linearize some terms and maintain the accuracy of the method, without compromising the unconditional energy stability and unconditional solvability of the scheme. The convergence analysis of a fully decoupled scheme, such as those in~\cite{chen16,han15} is far more challenging. The present work may be viewed as a first step towards analyzing such methods.

Theoretical justification for the convergence analysis and error estimates of numerical schemes applied to phase field models for fluid flow equations has attracted a great deal of attention in recent years. In particular, the recent work~\cite{diegel15} provides an analysis for an optimal error estimate (in the energy norms) for a first-order-accurate convex splitting finite element scheme applied to the Cahn-Hilliard-Nonsteady-Stokes system. The key point of that convergence analysis is the derivation of the maximum norm bound of the phase variable, which becomes available due to the discrete $\ell^2 (0,T; H^1)$ stability bound of the velocity field, at the numerical level. However, a careful examination shows that the same techniques from~\cite{diegel15} cannot be directly applied to the second-order-accurate numerical scheme studied in this paper. The primary difficulty is associated with the $3/4$ and $1/4$ coefficient distribution in the surface diffusion for the phase variable, at time steps $t^{n+1}$, $t^{n-1}$, respectively. In turn, an $\ell^\infty (0,T; H^2)$ estimate for the phase variable could not be derived via the discrete Gronwall inequality in the standard form. 

We therefore present an alternate approach to recover this $\ell^\infty (0,T; H^2)$ estimate for the phase field variable. A backward in time induction estimate for the $H^2$ norm of the phase field variable is applied. In addition, its combination with the $\ell^\infty (0,T; L^2)$ estimate for the chemical potential leads to an inequality involving a double sum term, with the second sum in the form of $\sum_{j=1}^m (\frac13)^{m-j}$. Subsequently, we apply a very non-standard discrete Gronwall inequality, namely Lemma~\ref{lem-discrete-gronwall-2} in Appendix~\ref{appen:A}, so that an $\ell^\infty (0,T; H^2)$ bound for the numerical solution of the phase variable is obtained. Moreover, the growth of this bound is at most linear in time, which is a remarkable result. 

It turns out that this stability bound greatly facilitates the second order convergence analysis in the energy norms for the numerical scheme presented in this paper. We point out that because of the $\ell^\infty (0,T; H^2)$ bound for the discrete phase variable, we are able to carry out the analysis on the Navier-Stokes part of the system that is much in the spirit of that which appears in Baker's groundbreaking paper~\cite{baker76}.  Due to the increased complexity of numerical calculations and the appearance of the nonlinear convection terms, a few more technical lemmas are required for the analysis included in this paper compared to the work presented in~\cite{diegel15} for the Cahn-Hilliard-Nonsteady-Stokes system. The use of these lemmas results in a numerical scheme which attains optimal convergence estimates in the appropriate energy norms: $\ell^\infty (0,T; H^1)$ for the phase variable and $\ell^2 (0,T; H^1)$ for the chemical potential. Moreover, such convergence estimates are unconditional: no scaling law is required between the time step size $\tau$ and the spatial grid size $h$. 

The remainder of the paper is organized as follows. In Section~\ref{sec:defn-and-properties}, we define our second order (in time) mixed finite element scheme and prove that the scheme is unconditionally stable and solvable with respect to both the time and space step sizes. In Section~\ref{sec:error estimates}, we provide a rigorous error analysis for the scheme under suitable regularity assumptions for the PDE solution. Finally, a few discrete Gronwall inequalities are reviewed and analyzed in Appendix~\ref{appen:A}.

	\section{A Second-Order-in-Time, Mixed Finite Element Scheme}
	\label{sec:defn-and-properties}
	
	\subsection{Definition of the Scheme}
	\label{subsec-defn}

Let $M$ be a positive integer and $0=t_0 < t_1 < \cdots < t_M = T$ be a uniform partition of $[0,T]$, with $\tau = t_{i+1}-t_{i}$ and $i=0,\ldots ,M-1$.  Suppose ${\mathcal T}_h = \left\{ K \right\}$ is a conforming, shape-regular, quasi-uniform family of triangulations of $\Omega$.  For $r\in\mathbb{Z}^+$, define $\mathcal{M}_r^h := \left\{v\in C^0(\Omega) \, \middle| \,v|_K \in {\mathcal P}_r(K), \,\forall \,\,  K\in \mathcal{T}_h \right\}\subset H^1(\Omega)$ and $\mathcal{M}_{r,0}^h:= \mathcal{M}_r^h \cap H_0^1(\Omega)$.

For a given positive integer $q$, we define the following:
	\begin{align*}
S_h &:= \mathcal{M}_q^h, 
	\\
\Soh &:= S_h\cap L_0^2(\Omega), 
	\\
\Xh &:= \left\{\bv \in \left[C^0(\Omega)\right]^d \ \middle| \ v_i \in \mathcal{M}_{q+1,0}^h, i = 1, \cdots, d \right\},
	\\
\Vh &:= \left\{ \bv \in \Xh \  \middle| \ \iprd{\nabla \cdot \bv}{w} = 0, \forall w \in \Soh\right\}.
	\end{align*}	
With the finite element spaces defined above, our mixed second-order convex splitting scheme is defined as follows:  for any $1\le m\le M$, given  $\phih^{m}, \phih^{m-1} \in S_h, \buh^m, \buh^{m-1} \in \Xh, p_h^{m} \in \Soh$ find $\phih^{m+1},\muh^{m+\hf} \in S_h, \buh^{m+1} \in \Xh,$ and $p_h^{m+1} \in \Soh$ such that
	\begin{subequations}
	\begin{align}
\iprd{\dtau \phih^{m+\hf}}{\nu} + \varepsilon \,\aiprd{\muh^{m+\hf}}{\nu} + \bform{\phitil^{m+\hf}}{\bbuh^{m+\hf}}{\nu} &= \, 0 , \forall \, \nu \in S_h,
	\label{eq:scheme-a}
	\\
\frac{1}{\varepsilon} \,\iprd{\chi\left(\phih^{m+1},\phih^m\right)}{\psi} - \frac{1}{\varepsilon} \iprd{\phitil^{m+\hf}}{\psi} + \varepsilon \,\aiprd{\phich^{m+\hf}}{\psi} - \iprd{\muh^{m+\hf}}{\psi} &= \, 0 , \forall \, \psi \in S_h,
	\label{eq:scheme-b}
	\\
\iprd{\dtau \buh^{m+\hf}}{\bv} + \eta \, \aiprd{\bbuh^{m+\hf}}{\bv} + \Bform{\butil^{m+\hf}}{\bbuh^{m+\hf}}{\bv} - \cform{\bv}{\bar{p}_h^{m+\hf}} &
	\nonumber
	\\
- \gamma \, \bform{\phitil^{m+\hf}}{\bv}{\muh^{m+\hf}} &= \, 0, \forall \,  \bv \in \Xh,
	\label{eq:scheme-c}
	\\
\cform{\bbuh^{m+\hf}}{q} &= \, 0, \forall \, q \in \Soh,
	\label{eq:scheme-d}
	\end{align}
	\end{subequations}
where
	\begin{align}
\dtau \phih^{m+\hf} &:= \frac{\phih^{m+1} - \phih^{m}}{\tau}, \quad \bphih^{m+\hf} := \hf \phih^{m+1} + \hf \phih^m, \quad \phitil^{m+\hf} := \frac32 \phih^m - \hf \phih^{m-1},
	\nonumber
	\\
\phich^{m+\hf} &:= \frac34 \phih^{m+1} + \frac14 \phih^{m-1}, \quad  \chi\left(\phih^{m+1},\phih^m\right) := \hf \left(\left(\phih^{m+1}\right)^2 + \left(\phih^{m}\right)^2\right) \bphih^{m+\hf}.
	\end{align}
The notation involving the pressure and velocity approximations are similar. For initial conditions we take
	\begin{align}
\phih^0 := R_h \phi_0, \ \phih^1 := R_h \phi(\tau), \quad \buh^0 := {\bf P}_h \bu_0, \ \buh^1 := {\bf P}_h \bu(\tau), \quad p_h^0 :=   P_h p_0,\ p_h^1 :=   P_h p(\tau),
	\end{align}
where $R_h: H^1(\Omega) \to S_h$ is the Ritz projection,
	\begin{equation}
\aiprd{R_h\phi - \phi}{\xi} = 0, \quad \forall \, \xi\in S_h, \quad \iprd{R_h \phi-\phi}{1}=0,
	\label{eq:Ritz-projection}
	\end{equation}
and $( {\bf P}_h, P_h) : {\bf V} \times L^2_0 \to \Vh \times \Soh$ is the Stokes projection,
	\begin{align}
\eta \, \aiprd{{\bf P}_h \bu - \bu}{\bv} - \cform{\bv}{P_h p - p} &= 0, \quad \forall \, \bv \in \Xh,
	\nonumber
	\\
\cform{{\bf P}_h \bu - \bu}{q} &= 0, \quad \forall \, q \in \Soh.
	\label{eq:Stokes-projection}
	\end{align}
It will be useful for our stability analyses to define the chemical potential at the $\frac{1}{2}$ time step via
	\begin{equation}
\iprd{\muh^{\hf}}{\psi} :=  \frac{1}{\varepsilon} \,\iprd{\chi\left(\phih^1,\phih^0\right)}{\psi} - \frac{1}{\varepsilon} \iprd{\bphih^{\hf}}{\psi} + \varepsilon \,\aiprd{\bphih^{\hf}}{\psi}, \quad  \forall \, \psi \in S_h.
	\label{eq:scheme-b-init}
	\end{equation}
We also define the residual function $\rho_h^\hf\in S_h$ that solves
	\begin{equation}
\iprd{ \rho_h^{\hf}}{\nu} :=    \iprd{\dtau \phih^{\hf}}{\nu}  + \varepsilon \,\aiprd{\muh^{\hf}}{\nu} + \bform{\bphih^{\hf}}{\bbuh^{\hf}}{\nu}, \quad  \forall \, \nu \in S_h.
	\label{eq:scheme-a-init}
	\end{equation}
While we do not expect the residual $\rho_h^\hf$ to be identically zero for finite $h,\tau > 0$, it will be stable in the relevant norms with the assumption of sufficiently regular PDE solutions.

	\begin{rem}
We have assumed exact expressions for $\phih^1$ and $\buh^1$. This is done to manage the length of the manuscript.  We can employ a separate initialization scheme, but the analysis becomes far more tedious. See, for example, \cite{diegel15b}. We point out that, because of the properties of the Ritz projection,
	\begin{equation}
\iprd{\phi_h^0}{1} = \iprd{\phi_h^1}{1},
	\end{equation}
under the natural assumption that $\iprd{\phi(0)}{1} = \iprd{\phi(\tau)}{1}$. Furthermore, note that this implies, $\iprd{ \dtau \phih^{\hf}}{1} = 0$.
	\end{rem}

	\begin{prop}
Suppose that $\psi\in S_h$ and $\bv \in \Vh$ are arbitrary. Then
	\begin{equation}
\bform{\psi}{\bv}{1} = 0.
	\end{equation}
	\end{prop}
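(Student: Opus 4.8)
The plan is to reduce $\bform{\psi}{\bv}{1}$ to a divergence pairing via integration by parts and then exploit the defining properties of the discrete divergence-free space $\Vh$. By the definition of the trilinear form, $\bform{\psi}{\bv}{1} = \iprd{\nabla\psi\cdot\bv}{1} = \int_\Omega \nabla\psi\cdot\bv \, d\mathbf{x}$. Using the product rule $\nabla\cdot(\psi\bv) = \nabla\psi\cdot\bv + \psi\,\nabla\cdot\bv$ together with the divergence theorem, I would rewrite this as
\[
\bform{\psi}{\bv}{1} = \int_{\partial\Omega} \psi\,(\bv\cdot\mathbf{n})\,ds - \int_\Omega \psi\,\nabla\cdot\bv\,d\mathbf{x}.
\]
Since $\bv\in\Xh$ has every component in $\mathcal{M}_{q+1,0}^h\subset H_0^1(\Omega)$, it vanishes on $\partial\Omega$, so the boundary integral drops out and $\bform{\psi}{\bv}{1} = -\iprd{\psi}{\nabla\cdot\bv} = -\cform{\bv}{\psi}$.

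It then remains to show $\cform{\bv}{\psi} = 0$. The one subtlety is that the discrete incompressibility constraint built into $\Vh$ only guarantees $\iprd{\nabla\cdot\bv}{w} = 0$ for test functions $w\in\Soh$, i.e.\ for \emph{mean-zero} members of $S_h$, whereas the given $\psi\in S_h$ need not have zero average. I would therefore split $\psi = \overline{\psi} + (\psi - \overline{\psi})$, where $\overline{\psi} := |\Omega|^{-1}\iprd{\psi}{1}$ denotes the mean. Because $S_h = \mathcal{M}_q^h$ contains the constants, the fluctuation $\psi - \overline{\psi}$ lies in $S_h$ and has zero mean, hence $\psi - \overline{\psi}\in\Soh$, and the defining property of $\Vh$ gives $\iprd{\nabla\cdot\bv}{\psi-\overline{\psi}} = 0$.

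For the constant part, $\iprd{\nabla\cdot\bv}{\overline{\psi}} = \overline{\psi}\int_\Omega \nabla\cdot\bv\,d\mathbf{x} = \overline{\psi}\int_{\partial\Omega}\bv\cdot\mathbf{n}\,ds = 0$, again using that $\bv$ vanishes on $\partial\Omega$. Adding the two contributions yields $\cform{\bv}{\psi} = 0$ and therefore $\bform{\psi}{\bv}{1} = 0$. There is no genuine obstacle here; the only point requiring care is the mean-value decomposition, which is forced precisely because the discrete divergence-free condition is tested against $\Soh$ rather than all of $S_h$. This argument is the discrete counterpart of the continuous identity $\bform{\phi}{\bu}{1} = 0$ invoked to establish mass conservation in \eqref{eq:pde-energy-law}.
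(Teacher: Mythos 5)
Your proof is correct and follows essentially the same route as the paper's: integrate by parts (with the boundary term vanishing since $\bv\in{\bf H}_0^1$), decompose $\psi$ into its mean $\overline{\psi}$ and the fluctuation $\psi-\overline{\psi}\in\Soh$, then kill the constant part by the divergence theorem and the fluctuation part by the defining property of $\Vh$. The subtlety you flag — that the discrete incompressibility constraint is only tested against $\Soh$, forcing the mean-value split — is exactly the point the paper's proof turns on.
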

	\begin{proof}
Using integration-by-parts, we get
	\begin{align}
\bform{\psi}{\bv}{1}= \iprd{\nabla\psi}{\bv} = -\iprd{\psi}{\nabla\cdot\bv} 
=  & \ -\overline{\psi}\iprd{1}{\nabla\cdot\bv} -\iprd{\psi-\overline{\psi}}{\nabla\cdot\bv} 
	\nonumber
	\\
= & \ -\overline{\psi}\cform{1}{\bv} -\cform{\psi-\overline{\psi}}{\bv} = 0.
	\end{align}
Observe that $\cform{1}{\bv} =0$ by the divergence theorem, using $\bv\cdot{\bf n}=0$ on $\partial\Omega$, and $\cform{\psi-\overline{\psi}}{\bv}=0$ since $\psi-\overline{\psi}\in\Soh$ and $\bv\in \Vh$.
	\end{proof}
	
	\begin{rem}
The last result relies on the fact that $\psi-\overline{\psi}\in \Soh$. In other words, the phase field space should be a subspace of the pressure space, which is restrictive. If this does not hold, mass conservation is lost. It may, however, be possible to prove what we wish using a variation of the trilinear form $b$. For example, we may take the alternate form
	\[
b(\psi,\bv,q) := \iprd{\nabla\psi\cdot\bv}{\nu}+\iprd{\nabla\cdot\bv}{\psi \nu}.
	\]
This allows us to decouple the pressure space from the phase space. The analysis of this case will be considered in a future work.
	\end{rem}

	\begin{rem}
For the Stokes projection, if the family of meshes satisfy certain reasonable properties, we have 
	\begin{equation}
\norm{{\bf P}_h {\bf u}-{\bf u}}{} + h\norm{\nabla\left({\bf P}_h {\bf u}-{\bf u}\right)}{} + h\norm{P_h p - p}{} \le C h^{s+1}\left(\left|{\bf u}\right|_{H^{s+1}} + \left|p\right|_{H^s}\right) ,
	\label{stokes-estimate}
	\end{equation}
provided that $(\bu,p)\in {\bf H}_0^1(\Omega)\cap {\bf H}^{s+1}(\Omega)\times H^s(\Omega)$, for all $0\le s \le q+1$. In fact, for our analysis, we do not need the optimal case $s = q+1$. We only require that the sub-optimal case $s = q$ holds, in other words, we will only assume $(\bu,p)\in {\bf H}_0^1(\Omega)\cap {\bf H}^{q+1}(\Omega)\times H^q(\Omega)$. See Assumption~\ref{asmp:initial-stability-3}.
	\end{rem}

Following similar arguments to what are given in \cite{diegel15}, we get the following theorem, which we state without proof:
	\begin{thm}
For any $1\le m\le M-1$, the fully discrete scheme \eqref{eq:scheme-a} -- \eqref{eq:scheme-d} is uniquely solvable and mass conservative, \emph{i.e.}, $\iprd{\phi_h^m-\phi^0}{1}{} = 0$.
	\end{thm}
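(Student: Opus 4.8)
The plan is to establish the two assertions separately: mass conservation follows immediately from the Proposition proved above, while unique solvability follows from the convex-splitting structure of the scheme.

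For \textbf{mass conservation}, I would test \eqref{eq:scheme-a} with $\nu = 1$. Since $\aiprd{\muh^{m+\hf}}{1} = \iprd{\nabla \muh^{m+\hf}}{\nabla 1} = 0$, and since the constraint \eqref{eq:scheme-d} forces $\bbuh^{m+\hf}\in\Vh$ (it lies in $\Xh$ and satisfies $\cform{\bbuh^{m+\hf}}{q}=0$ for all $q\in\Soh$), the Proposition gives $\bform{\phitil^{m+\hf}}{\bbuh^{m+\hf}}{1}=0$. What remains is $\iprd{\dtau\phih^{m+\hf}}{1}=0$, i.e. $\iprd{\phih^{m+1}}{1}=\iprd{\phih^m}{1}$. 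Combining this identity, valid for each $1\le m\le M-1$, with the relation $\iprd{\phih^0}{1}=\iprd{\phih^1}{1}$ from the Remark, and with the mean-preserving property $\iprd{\phih^0}{1}=\iprd{\phi_0}{1}$ of the Ritz projection in \eqref{eq:Ritz-projection}, a straightforward induction yields $\iprd{\phih^m-\phi^0}{1}=0$ for all $m$.

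For \textbf{unique solvability}, I would first reduce the system. Restricting the velocity test functions to $\bv\in\Vh$ annihilates $\cform{\bv}{\bar p_h^{m+\hf}}$, decoupling the pressure, which can be recovered afterwards from the discrete inf-sup (LBB) stability of the pair $(\Xh,\Soh)$. Equation \eqref{eq:scheme-b} expresses $\muh^{m+\hf}$ explicitly through $\phih^{m+1}$ (via $\x(\phih^{m+1},\phih^m)$ and $\phich^{m+\hf}=\frac34\phih^{m+1}+\frac14\phih^{m-1}$), so the chemical potential may also be eliminated. This leaves a finite-dimensional system for $(\phih^{m+1},\buh^{m+1})$ on $\Soh\times\Vh$, the mean of $\phih^{m+1}$ being already fixed by the mass-conservation step. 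The decisive observation is that $\x(\phih^{m+1},\phih^m)=\frac14\big((\phih^{m+1})^2+(\phih^m)^2\big)(\phih^{m+1}+\phih^m)$ is the discrete gradient of the convex potential $\tfrac14\phi^4$ and is monotone nondecreasing in its first argument. Together with the positive-definite implicit surface-diffusion term $\varepsilon\aiprd{\phich^{m+\hf}}{\,\cdot\,}$ (the $\frac34$ weight on $\phih^{m+1}$ being positive) and the positive kinetic and viscous contributions $\tfrac1\tau\iprd{\,\cdot\,}{\,\cdot\,}$ and $\eta\aiprd{\,\cdot\,}{\,\cdot\,}$, the reduced system is the Euler–Lagrange equation of a strictly convex, coercive functional on $\Soh\times\Vh$. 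The direct method then delivers a unique minimizer, hence a unique solution; equivalently one may verify that the reduced operator is strongly monotone and invoke a standard monotone-operator or topological-degree argument in finite dimensions.

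The main obstacle I anticipate is the bookkeeping of the coupling between the Cahn–Hilliard and Navier–Stokes blocks. Here the key point, which I would check carefully, is that the explicit extrapolated factors $\phitil^{m+\hf}$ and $\butil^{m+\hf}$ make the coupling terms $\bform{\phitil^{m+\hf}}{\bbuh^{m+\hf}}{\,\cdot\,}$ and $\gamma\,\bform{\phitil^{m+\hf}}{\,\cdot\,}{\muh^{m+\hf}}$ \emph{linear} in the unknowns, while $\Bform{\butil^{m+\hf}}{\bbuh^{m+\hf}}{\,\cdot\,}$ is skew-symmetric; consequently, when tested against the solution (or against a difference of two putative solutions) these terms contribute nothing that could spoil coercivity or monotonicity. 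The only remaining subtlety, the degeneracy of $a(\,\cdot\,,\,\cdot\,)$ on constants, is absorbed by the mass-conservation constraint, which fixes the mean of $\phih^{m+1}$ and confines the argument to the mean-zero space $\Soh$. Notably, no scaling between $\tau$ and $h$ is needed anywhere in this argument.
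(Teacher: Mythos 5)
Your mass-conservation argument is correct and is precisely the intended one: test \eqref{eq:scheme-a} with $\nu=1\in S_h$, note that \eqref{eq:scheme-d} places $\bbuh^{m+\hf}$ in $\Vh$ so the Proposition kills $\bform{\phitil^{m+\hf}}{\bbuh^{m+\hf}}{1}$, and close the induction using the Remark and the mean-preserving property in \eqref{eq:Ritz-projection}. For context: the paper states this theorem without proof, deferring to the first-order Cahn--Hilliard--Darcy--Stokes analysis of \cite{diegel15}, so the solvability part must be compared against that template.

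For unique solvability there is a genuine flaw in your headline route: the reduced system on $\Soh\times\Vh$ is \emph{not} the Euler--Lagrange equation of any functional, convex or otherwise, so the direct method cannot be invoked. A smooth finite-dimensional map is a gradient only if its Jacobian is symmetric, and symmetry fails here for exactly the two features you single out as the cancellation mechanisms: the cross-coupling terms $+\,\bform{\phitil^{m+\hf}}{\bbuh^{m+\hf}}{\nu}$ and $-\gamma\,\bform{\phitil^{m+\hf}}{\bv}{\muh^{m+\hf}}$ enter with opposite signs (after the $\gamma$-rescaling they are skew partners, not transposes of one another), and $\Bform{\butil^{m+\hf}}{\cdot}{\cdot}$ is skew-symmetric in its last two arguments. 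Cancellation in energy identities and symmetry of the Jacobian are mutually exclusive unless the terms vanish identically. This is also exactly where the present scheme departs from \cite{diegel15}: there the velocity equation is a symmetric Stokes/Darcy solve, so eliminating the velocity adds a \emph{symmetric} positive semidefinite operator to the Cahn--Hilliard block and a convex-minimization argument survives; here the extrapolated convective term makes the discrete velocity operator positive definite but nonsymmetric, so its inverse injects a nonsymmetric operator into the phase system and the variational structure is destroyed.

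What does work is the argument you relegate to an ``equivalently'' --- but the two routes are not equivalent, and only this one is available. Moreover, the monotonicity must be measured in the correct pairing, the same one as in Lemma~\ref{lem-energy-law}, not against $\delta\phi$ itself. Concretely: let $(\phi_i,\mu_i,\bu_i,p_i)$, $i=1,2$, be two solutions sharing the same history (so $\phitil^{m+\hf}$, $\butil^{m+\hf}$ coincide), and set $\delta\phi:=\phi_1-\phi_2$, $\delta\mu:=\mu_1-\mu_2$, $\delta\bu:=\bu_1-\bu_2$, $\delta p:=p_1-p_2$. Test the four difference equations with $\delta\mu$, $\tau^{-1}\delta\phi$, $\tfrac{1}{2\gamma}\delta\bu$, and $\tfrac{1}{2\gamma}\delta p$, respectively. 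The coupling terms cancel, $\Bform{\butil^{m+\hf}}{\delta\bu}{\delta\bu}=0$, the pressure terms cancel, and one is left with
\begin{equation*}
\varepsilon\norm{\nabla\delta\mu}{}^2+\frac{3\varepsilon}{4\tau}\norm{\nabla\delta\phi}{}^2
+\frac{1}{\varepsilon\tau}\iprd{\chi\left(\phi_1,\phih^m\right)-\chi\left(\phi_2,\phih^m\right)}{\delta\phi}
+\frac{1}{2\gamma\tau}\norm{\delta\bu}{}^2+\frac{\eta}{4\gamma}\norm{\nabla\delta\bu}{}^2=0,
\end{equation*}
in which the third term is nonnegative by the monotonicity of $\chi$ in its first argument (your verification of that fact is correct). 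Hence $\delta\bu={\bf 0}$ and $\nabla\delta\phi=0$; mass conservation fixes the constant, so $\delta\phi=0$; then \eqref{eq:scheme-b} forces $\delta\mu=0$, and the discrete inf-sup condition for $(\Xh,\Soh)$ --- which you invoke and which the paper implicitly assumes --- gives $\delta p=0$. Existence follows in finite dimensions from the same testing structure (coercivity) via Browder--Minty or a Brouwer-based zero-of-a-vector-field lemma, provided the operator is set up with the phase component measured in the discrete $H^{-1}$ inner product $\iprdmone{\cdot}{\cdot}$, which is the standard Cahn--Hilliard device hiding behind your phrase ``tested against the solution.'' If you rewrite the solvability part committing to this monotonicity argument and drop the convex-functional claim, the proof is complete.
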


	\subsection{Unconditional Energy Stability}
	\label{subsec-energy-stability}

We now show that the solutions to our scheme enjoy stability properties that are similar to those of the PDE solutions, and moreover, these properties hold regardless of the sizes of $h$ and $\tau$.  The first property, the unconditional energy stability, is a direct result of the convex decomposition. 

Consider the modified energy 
	\[
F(\phi, \psi, \bu) := E(\phi, \bu) + \frac{1}{4\varepsilon} \norm{\phi - \psi}{}^2 + \frac{\varepsilon}{8} \norm{\nabla \phi - \nabla \psi}{}^2,
	\]
where $E(\phi, \bu)$ is defined as above.
	
	\begin{lem}
	\label{lem-energy-law}
Let $(\phih^{m+1}, \muh^{m+\hf}, \buh^{m+1}, p_h^{m+1}) \in S_h\times S_h \times \Xh \times \Soh$ be the unique solution of  \eqref{eq:scheme-a} -- \eqref{eq:scheme-d}, for $1\le m\le M-1$.  Then the following energy law holds for any $h,\,  \tau >0$:
	\begin{align}
&F\left(\phih^{\ell+1}, \phih^{\ell}, \buh^{\ell+1} \right) + \tau \sum_{m=1}^\ell \left(\varepsilon \norm{\nabla\muh^{m+\hf}}{}^2 + \frac{\eta}{\gamma} \norm{\nabla \bbuh^{m+\hf}}{}^2 \right)
	\nonumber
	\\
&\hspace{.3cm}+  \sum_{m=1}^\ell \Bigg[ \frac{1}{4\varepsilon} \norm{\phih^{m+1} - 2 \phih^m + \phih^{m-1}}{}^2  + \frac{\varepsilon}{8} \norm{\nabla \phih^{m+1} - 2 \nabla \phih^m + \nabla \phih^{m-1}}{}^2 \Biggr] = F\left(\phih^1, \phih^0, \buh^1 \right),
	\label{eq:ConvSplitEnLaw}
	\end{align}
for all $1 \leq \ell \leq M-1$.
	\end{lem}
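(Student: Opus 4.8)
My plan is to derive the energy law by testing the scheme equations \eqref{eq:scheme-a}--\eqref{eq:scheme-d} with carefully chosen test functions, summing the resulting identities, and then summing over the time index $m$ to telescope the energy differences. The guiding principle is that this is a convex-splitting scheme, so the choices of test functions should be the discrete analogues of what produces the energy dissipation in the continuous law \eqref{eq:pde-energy-law}.

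First I would fix an index $m$ with $1\le m\le \ell$ and make the following test-function choices. In \eqref{eq:scheme-a} I take $\nu = \muh^{m+\hf}$; in \eqref{eq:scheme-b} I take $\psi = \dtau\phih^{m+\hf}$ (so that the chemical-potential term pairs up against the first equation); in \eqref{eq:scheme-c} I take $\bv = \bbuh^{m+\hf}$; and in \eqref{eq:scheme-d} I take $q = \bar p_h^{m+\hf}$ in order to eliminate the pressure term $\cform{\bbuh^{m+\hf}}{\bar p_h^{m+\hf}}$. Adding \eqref{eq:scheme-a} and \eqref{eq:scheme-b} cancels the $\iprd{\muh^{m+\hf}}{\dtau\phih^{m+\hf}}$ terms, leaving the surface-diffusion and double-well contributions paired against $\dtau\phih^{m+\hf}$, plus the convection term $\bform{\phitil^{m+\hf}}{\bbuh^{m+\hf}}{\muh^{m+\hf}}$. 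In \eqref{eq:scheme-c} the term $\gamma\,\bform{\phitil^{m+\hf}}{\bbuh^{m+\hf}}{\muh^{m+\hf}}$ appears, so after scaling the Cahn--Hilliard part by $\gamma$ (or dividing the Navier--Stokes identity by $\gamma$) these coupling terms cancel exactly, which is the discrete manifestation of the energy exchange between the phase and fluid subsystems. What survives on the dissipation side is $\varepsilon\norm{\nabla\muh^{m+\hf}}{}^2$ and $\tfrac{\eta}{\gamma}\norm{\nabla\bbuh^{m+\hf}}{}^2$, matching the dissipation terms in \eqref{eq:ConvSplitEnLaw}.

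The main work, and the step I expect to be the genuine obstacle, is the algebraic identity for the convex-splitting terms involving $\chi(\phih^{m+1},\phih^m)$, the surface-diffusion average $\phich^{m+\hf}$, and the linear term $\phitil^{m+\hf}$, when paired against $\dtau\phih^{m+\hf}$. The goal is to show that
\[
\iprd{\chi(\phih^{m+1},\phih^m) - \phitil^{m+\hf}}{\phih^{m+1}-\phih^m} + \varepsilon^2\,\aiprd{\phich^{m+\hf}}{\phih^{m+1}-\phih^m}
\]
equals the exact difference of the modified energy $F$ at consecutive levels plus the nonnegative numerical-dissipation terms $\tfrac{1}{4\varepsilon}\norm{\phih^{m+1}-2\phih^m+\phih^{m-1}}{}^2$ and $\tfrac{\varepsilon}{8}\norm{\nabla\phih^{m+1}-2\nabla\phih^m+\nabla\phih^{m-1}}{}^2$. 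For the double-well part I would verify the polynomial identity $\tfrac12\big((\phih^{m+1})^2+(\phih^m)^2\big)\bphih^{m+\hf}\cdot(\phih^{m+1}-\phih^m) = \tfrac14\big((\phih^{m+1})^4-(\phih^m)^4\big)$ pointwise, which telescopes cleanly; the extrapolation term $\phitil^{m+\hf}$ supplies the $-\tfrac{1}{2\varepsilon}|\phih|^2$ contribution together with the $\tfrac{1}{4\varepsilon}\norm{\cdot}{}^2$ second-difference remainder. The surface term requires expanding $\phich^{m+\hf}=\tfrac34\phih^{m+1}+\tfrac14\phih^{m-1}$ and writing it, via $\phih^{m+1}=\bphih^{m+\hf}+\tfrac12(\phih^{m+1}-\phih^m)$-type manipulations, as the trapezoidal difference of $\tfrac{\varepsilon}{2}\norm{\nabla\phih}{}^2$ plus the $\tfrac{\varepsilon}{8}$ second-difference remainder that realizes the $\tfrac{\varepsilon}{8}\norm{\nabla\phih^{m+1}-\nabla\phih^m}{}^2$ part of $F$; getting the coefficients $\tfrac34,\tfrac14$ to reproduce exactly these two pieces is the delicate bookkeeping.

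Once these two local identities are established, the proof concludes by summing over $m=1,\dots,\ell$. All the energy terms $E$ and the $\norm{\phih^{m+1}-\phih^m}{}^2$, $\norm{\nabla\phih^{m+1}-\nabla\phih^m}{}^2$ contributions telescope, leaving $F(\phih^{\ell+1},\phih^\ell,\buh^{\ell+1})$ on the left and $F(\phih^1,\phih^0,\buh^1)$ on the right, while the second-difference remainders and the dissipation integrals accumulate into the sums displayed in \eqref{eq:ConvSplitEnLaw}. The velocity contribution is handled by the standard identity $\iprd{\dtau\buh^{m+\hf}}{\bbuh^{m+\hf}} = \tfrac{1}{2\tau}\big(\norm{\buh^{m+1}}{}^2-\norm{\buh^m}{}^2\big)$ and the skew-symmetry of $B$, which gives $\Bform{\butil^{m+\hf}}{\bbuh^{m+\hf}}{\bbuh^{m+\hf}}=0$, so the trilinear convection term vanishes identically and contributes nothing to the energy balance.
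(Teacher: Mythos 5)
Your proposal is correct and follows the paper's proof essentially verbatim: the same test functions ($\nu=\muh^{m+\hf}$, $\psi=\dtau\phih^{m+\hf}$, $\bv=\tfrac{1}{\gamma}\bbuh^{m+\hf}$, $q=\tfrac{1}{\gamma}\bar{p}_h^{m+\hf}$), the same exact cancellation of the coupling terms $\bform{\phitil^{m+\hf}}{\bbuh^{m+\hf}}{\muh^{m+\hf}}$ and of the trilinear/pressure terms, the same two algebraic identities for the $\chi$--$\phitil$ combination and for the $\tfrac34$--$\tfrac14$ surface average (the paper's \eqref{eq:identity-nonlinear} and \eqref{eq:identity-check}), and the same telescoping sum $\tau\sum_{m=1}^{\ell}$. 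One trivial bookkeeping slip: in your displayed ``goal'' identity you have multiplied the scheme's terms by $\varepsilon\tau$, so the right-hand side should then be $\varepsilon$ times the $F$-difference plus $\varepsilon$ times the dissipation terms; the paper avoids this by keeping the $1/\varepsilon$ and $\varepsilon$ coefficients in place and summing at the end.
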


	\begin{proof}
Setting $\nu= \muh^{m+\hf}$ in \eqref{eq:scheme-a}, $\psi = \dtau\phih^{m+\hf}$ in \eqref{eq:scheme-b}, $\bv = \frac{1}{\gamma} \bbuh^{m+\hf}$ in \eqref{eq:scheme-c}, and $q = \frac{1}{\gamma} \bar{p}_h^{m+\hf}$ in \eqref{eq:scheme-d} gives
	\begin{align}
\iprd{\dtau \phih^{m+\hf}}{\muh^{m+\hf}} + \varepsilon \norm{\nabla\muh^{m+\hf}}{}^2 +  \bform{\phitil^{m+\hf}}{\bbuh^{m+\hf}}{\muh^{m+\hf}} &=   0,
	\label{eq:tested-energy-1}
	\\
	\nonumber
\frac{1}{\varepsilon} \,\iprd{\chi\left(\phih^{m+1},\phih^m\right)}{\dtau \phih^{m+\hf}} - \frac{1}{\varepsilon} \iprd{\phitil^{m+\hf}}{\dtau \phih^{m+\hf}} \quad &
	\\
+ \, \varepsilon \,\aiprd{\phich^{m+\hf}}{\dtau \phih^{m+\hf}}  - \iprd{\muh^{m+\hf}}{\dtau \phih^{m+\hf}} &= 0,
	\label{eq:tested-energy-2-a}
	\\
\frac{1}{\gamma} \iprd{\dtau \buh^{m+\hf}}{\bbuh^{m+\hf}} + \frac{\eta}{\gamma}  \, \norm{\nabla \bbuh^{m+\hf}}{}^2 + \frac{1}{\gamma} \Bform{\butil^{m+\hf}}{\bbuh^{m+\hf}}{\bbuh^{m+\hf}} \quad &
	\nonumber
	\\
- \, \frac{1}{\gamma} \cform{\bbuh^{m+\hf}}{\bar{p}_h^{m+\hf}} - \bform{\phitil^{m+\hf}}{\bbuh^{m+\hf}}{\muh^{m+\hf}} &= 0,
	\label{eq:tested-energy-2-b}
	\\
\frac{1}{\gamma} \cform{\bbuh^{m+\hf}}{\bar{p}_h^{m+\hf}} &= 0.
	\label{eq:tested-energy-2-c}
	\end{align}
Combining \eqref{eq:tested-energy-1} -- \eqref{eq:tested-energy-2-c}, using the following identities 
	\begin{align}
& \iprd{\chi\left(\phih^{m+1},\phih^m\right)}{\dtau \phih^{m+\hf}} - \iprd{\phitil^{m+\hf}}{\dtau \phih^{m+\hf}} = \, \frac1{4\tau} \left( \norm{\left(\phih^{m+1}\right)^2-1}{}^2 - \norm{\left(\phih^{m}\right)^2-1}{}^2\right)
	\nonumber
	\\
&\hspace{8.75cm} + \frac1{4\tau} \left( \norm{\phih^{m+1} - \phih^m}{}^2 - \norm{\phih^{m} - \phih^{m-1}}{}^2\right)
	\nonumber
	\\
&\hspace{8.75cm}+ \frac1{4\tau} \norm{\phih^{m+1} - 2 \phih^m + \phih^{m-1}}{}^2,
	\label{eq:identity-nonlinear}
	\\
&\aiprd{\phich^{m+\hf}}{\dtau\phih^{m+\hf}} = \, \frac{1}{2 \tau} \left( \norm{\nabla \phih^{m+1}}{}^2 - \norm{\nabla \phih^{m}}{}^2 \right)  + \frac{1}{8 \tau} \norm{\nabla \phih^{m+1} - 2 \nabla \phih^m + \nabla \phih^{m-1}}{}^2 
	\nonumber
	\\
&\hspace{3.75cm}+ \frac{1}{8 \tau} \left(\norm{\nabla \phih^{m+1} - \nabla \phih^m}{}^2 - \norm{\nabla \phih^{m} - \nabla \phih^{m-1}}{}^2 \right) , 
	\label{eq:identity-check}
	\end{align}
and applying the operator $\tau\sum_{m=1}^\ell$ to the combined equations, we get  \eqref{eq:ConvSplitEnLaw}.
	\end{proof} 

	\begin{asmp}
	\label{asmp:initial-stability-1}
From this point, we assume the following reasonable stabilities independent of $h$ and $\tau$:
	\begin{align*}
E\left(\phih^0, \buh^0\right) \le C, 
		\\
F\left(\phih^1, \phih^0, \buh^1\right) = E(\phih^1, \buh^1) + \frac{1}{4\varepsilon} \norm{\phih^1 - \phih^0}{}^2 + \frac{\varepsilon}{8} \norm{\nabla \phih^1 - \nabla \phih^0}{}^2 \le C,
	\\
\tau  \norm{\nabla\muh^{\hf}}{}^2 + \tau \norm{\nabla \bbuh^{\hf}}{}^2  \le C,
	\end{align*}
where $C>0$ is independent of $h$.
	\end{asmp}
	\begin{rem}
In the sequel, we will not track the dependences of the estimates on the interface parameter $\varepsilon>0$ or the viscosity $\eta>0$, though these may be important.
	\end{rem}
	
The next result follows from energy stability and Assumption~\ref{asmp:initial-stability-1}. We omit the proof.

	\begin{lem}
	\label{lem-a-priori-stability-trivial}
Let $(\phih^{m+1}, \muh^{m+\hf}, \buh^{m+1}, p_h^{m+1}) \in S_h\times S_h \times \Xh \times \Soh$ be the unique solution of  \eqref{eq:scheme-a} -- \eqref{eq:scheme-d}, for $1\le m\le M-1$.  Then the following estimates hold for any $h,\, \tau>0$:
	\begin{align}
\max_{0\leq m\leq M} \left[ \norm{\nabla\phih^m}{}^2 + \norm{\left( \phih^m\right)^2-1}{}^2 + \norm{\buh^m}{}^2 \right] &\leq C, 
	\label{eq:Linf-phi-discrete}
	\\
\max_{0\leq m\leq M}\left[\norm{\phih^m}{L^4}^4 +\norm{\phih^m}{}^2  +\norm{\phih^m}{H^1}^2 \right] &\leq C,
	\label{eq:Linf-H1-phi-discrete}
	\\
\max_{1\leq m\leq M}\left[\norm{\phih^m-\phih^{m-1}}{}^2 + \norm{\nabla \phih^m - \nabla \phih^{m-1}}{}^2\right] &\leq C,
	\label{eq:Linf-phi-diff-discrete}
	\\
\tau \sum_{m=0}^{M-1} \left[ \norm{\nabla\muh^{m+\hf}}{}^2  + \norm{\nabla \bbuh^{m+\hf}}{}^2 \right] &\leq C , 
	\label{eq:sum-mu-discrete}
	\\
\sum_{m=1}^{M-1} \left[ \norm{\phih^{m+1} - 2 \phih^m + \phih^{m-1}}{}^2 + \norm{\nabla \phih^{m+1} - 2 \nabla \phih^m + \nabla \phih^{m-1}}{}^2 \right] &\leq C , 
	\label{eq:sum-phi-discrete}
	\end{align}
for some constant $C>0$ that is independent of $h$, $\tau$, and $T$.
	\end{lem}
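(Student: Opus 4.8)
The plan is to exploit the fact that the energy law \eqref{eq:ConvSplitEnLaw} of Lemma~\ref{lem-energy-law} is an \emph{equality} whose left-hand side is a sum of manifestly nonnegative quantities, so that each individual term is controlled by the single number $F\left(\phih^1, \phih^0, \buh^1\right)$, which Assumption~\ref{asmp:initial-stability-1} declares to be bounded by $C$. First I would observe that every integrand appearing in $E(\phi, \bu)$ --- the double-well $\frac{1}{4\varepsilon}(\phi^2-1)^2$, the gradient energy $\frac{\varepsilon}{2}|\nabla\phi|^2$, and the kinetic term $\frac{1}{2\gamma}|\bu|^2$ --- is pointwise nonnegative, and that the two correction terms in $F$ as well as the two accumulated double-difference sums are squared norms; hence every term on the left of \eqref{eq:ConvSplitEnLaw} is nonnegative. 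Consequently, for each $1 \le \ell \le M-1$ I may discard all but one term at a time and conclude $F\left(\phih^{\ell+1}, \phih^\ell, \buh^{\ell+1}\right) \le C$, together with the two summed bounds obtained by taking $\ell = M-1$.

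From $F\left(\phih^{\ell+1}, \phih^\ell, \buh^{\ell+1}\right) \le C$ and $F \ge E$, I would read off $\norm{(\phih^{\ell+1})^2 - 1}{}^2 \le C$, $\norm{\nabla\phih^{\ell+1}}{}^2 \le C$, and $\norm{\buh^{\ell+1}}{}^2 \le C$ for the range $2 \le \ell + 1 \le M$, while the missing endpoints are supplied directly by Assumption~\ref{asmp:initial-stability-1}: the case $m=1$ by $F\left(\phih^1, \phih^0, \buh^1\right) \le C$ and the case $m=0$ by $E\left(\phih^0, \buh^0\right) \le C$. This yields \eqref{eq:Linf-phi-discrete}. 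The two correction terms in $F$ give $\norm{\phih^{m} - \phih^{m-1}}{}^2 \le C$ and $\norm{\nabla\phih^m - \nabla\phih^{m-1}}{}^2 \le C$, which is \eqref{eq:Linf-phi-diff-discrete}. The accumulated double-difference sums, evaluated at $\ell = M-1$, immediately give \eqref{eq:sum-phi-discrete}; for \eqref{eq:sum-mu-discrete} the energy law controls $\tau\sum_{m=1}^{M-1}$, and the missing $m=0$ term $\tau\norm{\nabla\muh^{\hf}}{}^2 + \tau\norm{\nabla\bbuh^{\hf}}{}^2$ is precisely the third hypothesis of Assumption~\ref{asmp:initial-stability-1}. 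In all of these extractions the $\varepsilon$, $\eta$, and $\gamma$ prefactors are absorbed into $C$ per the stated convention of not tracking those dependences.

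The one step that is not pure bookkeeping is \eqref{eq:Linf-H1-phi-discrete}, where I must upgrade the double-well bound $\norm{(\phih^m)^2-1}{}^2 \le C$ to genuine $L^4$ and $L^2$ control of $\phih^m$ itself. Expanding the square gives $\norm{\phih^m}{L^4}^4 = \norm{(\phih^m)^2-1}{}^2 + 2\norm{\phih^m}{}^2 - |\Omega|$, and Hölder's inequality on the bounded domain gives $\norm{\phih^m}{}^2 \le |\Omega|^{1/2}\norm{\phih^m}{L^4}^2$. Writing $X := \norm{\phih^m}{L^4}^2 \ge 0$, these combine into the quadratic inequality $X^2 \le C + 2|\Omega|^{1/2} X$, which forces $X \le |\Omega|^{1/2} + \sqrt{|\Omega| + C}$; hence $\norm{\phih^m}{L^4}^4 \le C$ and then $\norm{\phih^m}{}^2 \le C$. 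Adding the already-established gradient bound gives $\norm{\phih^m}{H^1}^2 = \norm{\phih^m}{}^2 + \norm{\nabla\phih^m}{}^2 \le C$, completing \eqref{eq:Linf-H1-phi-discrete}. I expect this quadratic-inequality closure to be the only genuine obstacle; everything else is extraction from nonnegativity. Finally, the constant $C$ is independent of $h$, $\tau$, and $T$ because \eqref{eq:ConvSplitEnLaw} is an equality with the fixed, $T$-independent right-hand side $F\left(\phih^1, \phih^0, \buh^1\right)$ --- no Gronwall-type accumulation ever enters --- which is exactly the uniformity asserted in the statement.
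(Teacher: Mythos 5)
Your proof is correct and follows exactly the route the paper intends: the paper omits the proof of Lemma~\ref{lem-a-priori-stability-trivial}, stating only that it ``follows from energy stability and Assumption~\ref{asmp:initial-stability-1},'' which is precisely your term-by-term extraction from the nonnegative left-hand side of \eqref{eq:ConvSplitEnLaw} plus the assumed bounds at $m=0,1$. Your quadratic-inequality closure for \eqref{eq:Linf-H1-phi-discrete} is a valid way to pass from the double-well bound to $L^4$ control (one could also note directly that $\phi^4 = \left(\left(\phi^2-1\right)+1\right)^2 \le 2\left(\phi^2-1\right)^2 + 2$, but this is a matter of taste).
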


We are able to prove the next set of \emph{a priori} stability estimates without any restrictions on $h$ and $\tau$.   We define the discrete Laplacian, $\Delta_h: S_h \to \Soh$, as follows:  for any $v_h\in S_h$, $\Delta_h v_h\in \Soh$ denotes the unique solution to the problem
	\begin{equation}
\iprd{\Delta_h v_h}{\xi} = -\aiprd{v_h}{\xi},  \quad\forall \, \,\xi\in S_h.
	\label{eq:discrete-laplacian}
	\end{equation}
In particular, setting $\xi = \Delta_h v_h$ in \eqref{eq:discrete-laplacian}, we obtain  
	\begin{displaymath}
\norm{\Delta_h v_h}{}^2 = -\aiprd{v_h}{ \Delta_hv_h} .
	\end{displaymath}
We also need the following discrete Gagliardo-Nirenberg inequalities. See, for example,~\cite{heywood82,liu16}.
	\begin{prop}
If $\Omega$ is a convex polygonal or polyhedral domain, and $\Tauh$ is a globally quasi-uniform triangulation of $\Omega$, we have 
	\begin{align}
\norm{\psi_h}{L^\infty} \leq & \ C\norm{\Delta_h \psi_h}{}^{\frac{d}{2(6-d)}} \,\norm{\psi_h}{L^6}^{\frac{3(4-d)}{2(6-d)}} + C\norm{\psi_h}{L^6}, \quad \forall \, \psi_h \in S_h, \qquad d=2,3 ,
	\label{eq:infinity-bound}
	\\
\norm{\nabla \psi_h}{L^4}  \leq & \ C\left( \norm{\nabla \psi_h}{}+\norm{\Delta_h \psi_h}{}\right)^{\frac{d}{4}} \norm{\nabla \psi_h}{}^{\frac{4-d}{4}}, \quad \forall \, \psi_h \in S_h,\quad d= 2,3,
	\label{eq:grad-v-L4-bound}
	\end{align}
for some constant $C>0$ that is independent of $h$.
	\end{prop}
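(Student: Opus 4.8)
The plan is to deduce both discrete inequalities from their classical continuous counterparts by \emph{lifting} the discrete Laplacian to a genuine $H^2$ function. Given $\psi_h\in S_h$, I would let $w\in H^1(\Omega)$ be the weak solution of the Neumann problem $-\Delta w = -\Delta_h\psi_h$ in $\Omega$, $\partial_n w=0$ on $\partial\Omega$, normalized by $\iprd{w}{1}=\iprd{\psi_h}{1}$; this is solvable since $\Delta_h\psi_h\in\Soh$ has zero mean. Testing the weak form against $\xi\in S_h$ and invoking the definition \eqref{eq:discrete-laplacian} of $\Delta_h$ gives $\aiprd{w}{\xi}=-\iprd{\Delta_h\psi_h}{\xi}=\aiprd{\psi_h}{\xi}$ for all $\xi\in S_h$; since the two functions also share the same mean, the characterization \eqref{eq:Ritz-projection} forces $\psi_h=R_hw$. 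Thus $\psi_h$ is exactly the Ritz projection of an honest $H^2$ function, and because $\Omega$ is convex, elliptic regularity yields $\norm{w}{H^2}\le C\left(\norm{\Delta_h\psi_h}{}+\norm{\psi_h}{}\right)$.

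With $w$ in hand I would invoke the classical Gagliardo--Nirenberg inequalities, $\norm{w}{L^\infty}\le C\norm{w}{H^2}^{a}\norm{w}{L^6}^{1-a}+C\norm{w}{L^6}$ with $a=\tfrac{d}{2(6-d)}$, and $\norm{\nabla w}{L^4}\le C\norm{w}{H^2}^{d/4}\norm{\nabla w}{}^{(4-d)/4}$, which already exhibit the precise product structure and exponents appearing in the statement. It then remains to transfer these bounds from $w$ to $\psi_h=R_hw$ and to replace the lower-order factors $\norm{w}{L^6}$ and $\norm{\nabla w}{}$ by their discrete analogues $\norm{\psi_h}{L^6}$ and $\norm{\nabla\psi_h}{}$, the errors in doing so being governed by the approximation properties of the Ritz projection on $\Tauh$.

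The transfer is where the quasi-uniformity of $\Tauh$ is essential and is, to my mind, the main obstacle. The mechanism I have in mind is to estimate the projection error $w-R_hw$ in two complementary ways: an \emph{approximation} bound carrying a positive power of $h$ (roughly $\norm{w-R_hw}{L^\infty}\le Ch^{2-d/2}\norm{w}{H^2}$, and similarly at the $W^{1,4}$ level with exponent $1-d/4$), obtained from the $L^2$- and $H^1$-error estimates together with an inverse estimate; and an \emph{inverse-type} bound carrying a negative power of $h$ (namely $-d/6$, respectively $-d/4$) against the $L^6$ norm, respectively the $L^2$-gradient norm. The weighted geometric mean of these two bounds, with the weight dictated precisely by $a$ (respectively $d/4$), cancels the powers of $h$ and reproduces the claimed exponents: a direct computation shows that the $h$-exponents balance exactly when the weight equals $a$ (respectively $d/4$), in both $d=2$ and $d=3$. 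The delicate points, which are precisely what the cited analyses \cite{heywood82,liu16} supply, are (i) that these stability and approximation estimates hold with constants independent of $h$ on a quasi-uniform family, and (ii) that the resulting cross terms and the lower-order contributions from the mean-value normalization can be absorbed into the additive $C\norm{\psi_h}{L^6}$ term for the first inequality, and into the multiplicative $\norm{\nabla\psi_h}{}$ factor for the second, rather than degrading the scaling. I would carry out the computation in detail for $d=3$, the borderline case, and note that $d=2$ is entirely analogous.
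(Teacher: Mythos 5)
Your lifting construction is sound, and it is in fact the standard starting point: solving the Neumann problem $-\Delta w=-\Delta_h\psi_h$ with matching mean does give $\psi_h=R_hw$ via \eqref{eq:Ritz-projection} and \eqref{eq:discrete-laplacian}, convexity gives $\norm{w}{H^2}\le C\left(\norm{\Delta_h\psi_h}{}+\norm{\psi_h}{L^6}\right)$, and your continuous Gagliardo--Nirenberg exponents check out. (Note the paper itself offers no proof of this proposition, only the pointer to \cite{heywood82,liu16}, so the comparison here is with what a complete argument requires.) The genuine gap is in the transfer step --- exactly where you locate the difficulty --- and the geometric-mean mechanism you propose does not close it. First, the ``inverse-type bound'' on the projection error is not legitimate: $w-R_hw$ is not a finite element function, so no inverse inequality applies to it. If you repair this by inserting the interpolant, $w-R_hw=(w-I_hw)+(I_hw-R_hw)$, then every valid bound you can produce for $\norm{w-R_hw}{L^\infty}$ has the form $Ch^{\beta}\norm{w}{H^2}$ with $\beta=2-\nicefrac{d}{2}$ (or smaller), because $H^2$ is the only regularity $w$ possesses. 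The advertised cancellation of $h$-powers would require the second bound in the average to be against $\norm{\psi_h}{L^6}$ \emph{alone}, but no estimate of the form $\norm{w-R_hw}{L^\infty}\le Ch^{-d/6}\norm{\psi_h}{L^6}$ holds, since $\norm{w}{L^\infty}$ is not controlled by $h^{-d/6}\norm{\psi_h}{L^6}$. Consequently every geometric mean you can legitimately form still contains a term $h^{\beta}\norm{w}{H^2}$, and nothing has been gained.

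This leftover term is not a technicality that ``absorption'' can fix; it is genuinely not dominated by the right-hand side of \eqref{eq:infinity-bound}. Take $\psi_h$ a unit-amplitude checkerboard oscillation at grid scale: then $\norm{\psi_h}{L^\infty}\sim\norm{\psi_h}{L^6}\sim1$ while $\norm{\Delta_h\psi_h}{}\sim h^{-2}$, so $h^{2-d/2}\norm{w}{H^2}\sim h^{-d/2}$, whereas $\norm{\Delta_h\psi_h}{}^{a}\norm{\psi_h}{L^6}^{1-a}+\norm{\psi_h}{L^6}\sim h^{-d/(6-d)}$ with $a=\frac{d}{2(6-d)}$, and $\nicefrac{d}{2}>\nicefrac{d}{(6-d)}$ for $d=2,3$; the triangle-inequality bound is therefore strictly weaker than the proposition in this regime. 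What is missing is a dichotomy, not an averaging. If $h^{(6-d)/3}\norm{\Delta_h\psi_h}{}\ge\norm{\psi_h}{L^6}$ (the oscillatory regime), bypass $w$ entirely and use the plain inverse estimate $\norm{\psi_h}{L^\infty}\le Ch^{-d/6}\norm{\psi_h}{L^6}$, which under this very hypothesis satisfies $h^{-d/6}\norm{\psi_h}{L^6}\le C\norm{\Delta_h\psi_h}{}^{a}\norm{\psi_h}{L^6}^{1-a}$ (the exponents match exactly: $\nicefrac{d}{6}=a\cdot\frac{6-d}{3}$). In the complementary regime $h^{(6-d)/3}\norm{\Delta_h\psi_h}{}\le\norm{\psi_h}{L^6}$, your lifting argument does close: every error and cross term is of the form $h^{\beta}\norm{\Delta_h\psi_h}{}$ with precisely the exponent needed for absorption, where for $d=2$ one must also use the sharper projection-error bound $\norm{w-R_hw}{L^6}\le Ch^{4/3}\norm{w}{H^2}$ obtained by interpolating the $L^2$ and $H^1$ Ritz estimates. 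The same dichotomy, with threshold $h\norm{\Delta_h\psi_h}{}$ versus $\norm{\nabla\psi_h}{}$, is what rescues \eqref{eq:grad-v-L4-bound}.
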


	\begin{asmp}
	\label{asmp:initial-stability-2}
From this point, we will assume that $\Omega$ is a convex polygonal or polyhedral domain, and $\Tauh$ is a globally quasi-uniform triangulation of $\Omega$. Furthermore, we assume the following initial stabilities hold: 
	\begin{equation}
\tau  \norm{\dtau \phih^{\hf}}{H^{-1}}^2 + \tau\norm{\dtau \phih^{\hf}}{-1,h}^2 +\tau \norm{\muh^{\hf}}{}^2  \le  C,
	\end{equation}
where $C>0$ is independent of $h$.
	\end{asmp}
See \cite{aristotelous13,diegel15,liu16} for a definition of the norm $\norm{\, \cdot\, }{-1,h}$.
	
	\begin{lem}
	\label{lem-improved-a-priori-stabilities}
Let $(\phih^{m+1}, \muh^{m+\hf}, \buh^{m+1}, p_h^{m+1}) \in S_h\times S_h \times \Xh \times \Soh$ be the unique solution of  \eqref{eq:scheme-a} -- \eqref{eq:scheme-d}, $1\le m \le M-1$. Then the following estimates hold for any $h,\, \tau >0$:
	\begin{align}
\tau \sum_{m=0}^{M-1} \Bigg[ \norm{\dtau \phih^{m+\hf}}{H^{-1}}^2 +\norm{\dtau \phih^{m+\hf}}{-1,h}^2 \Bigg] &\le C,
	\label{eq:sum-3D-good-dtau}
	\\
\tau \sum_{m=0}^{M-1}  \norm{\muh^{m+\hf}}{}^2 & \le C (T+1),
	\label{eq:sum-3D-good-mu}
	\\
\varepsilon \norm{ \Delta_h \phich^{m+\hf}}{}^2 & \leq \norm{\muh^{m+\hf}}{}^2  + C, \quad \forall \ 1 \le m \le M-1,
	\label{eq:discrete-laplacian-phi-mu-equiv}
	\\
\varepsilon \norm{ \Delta_h \bphih^{\hf}}{}^2 & \leq \norm{\muh^{\hf}}{}^2  + C, 
	\label{eq:discrete-laplacian-phi-mu-equiv-init}
	\\
\tau \sum_{m=1}^{M-1} \Bigg[	\norm{ \Delta_h \phich^{m+\hf}}{}^2  + \norm{\phich^{m+\hf}}{L^\infty}^{\frac{4(6-d)}{d}} \Bigg] &\le C (T+1),
	\label{eq:sum-3D-good-phicheck}
	\end{align}
for some constant $C>0$ that is independent of $h$, $\tau$, and $T$.
	\end{lem}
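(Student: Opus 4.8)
The plan is to establish the five estimates of Lemma~\ref{lem-improved-a-priori-stabilities} in the order they are stated, since the later estimates build on the earlier ones. The unifying idea is to use the scheme equations tested against carefully chosen functions, combined with the already-established energy stability from Lemma~\ref{lem-a-priori-stability-trivial} and the discrete Gagliardo-Nirenberg inequalities.

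First, for \eqref{eq:sum-3D-good-dtau}, I would take $\nu\in S_h$ in \eqref{eq:scheme-a} and bound $\iprd{\dtau\phih^{m+\hf}}{\nu}$ by estimating each of the two remaining terms. The term $\varepsilon\aiprd{\muh^{m+\hf}}{\nu}$ is controlled by $\norm{\nabla\muh^{m+\hf}}{}$, whose $\tau$-weighted sum is bounded by \eqref{eq:sum-mu-discrete}. For the trilinear term $\bform{\phitil^{m+\hf}}{\bbuh^{m+\hf}}{\nu}$, I would use Hölder together with the $\ell^\infty$ bounds on $\norm{\nabla\phih^m}{}$ from \eqref{eq:Linf-phi-discrete}; the factor $\norm{\nabla\bbuh^{m+\hf}}{}$ again has a bounded $\tau$-weighted sum by \eqref{eq:sum-mu-discrete}. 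Dividing by $\norm{\nu}{H^1}$ (respectively the discrete $-1,h$ test function), summing in $m$ with the weight $\tau$, and invoking Assumption~\ref{asmp:initial-stability-2} for the $m=0$ term yields \eqref{eq:sum-3D-good-dtau}. The subtlety here is handling the duality pairing cleanly for both the continuous $H^{-1}$ and the discrete $-1,h$ norms.

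Next, for the $L^2$ bound \eqref{eq:sum-3D-good-mu} on the chemical potential, I would split $\muh^{m+\hf}$ into its mean $\overline{\muh^{m+\hf}}$ and its mean-zero part. The mean-zero part is controlled by $\norm{\nabla\muh^{m+\hf}}{}$ via Poincaré, whose summed bound comes from \eqref{eq:sum-mu-discrete}; this is where the factor $(T+1)$ enters, since summing a uniformly bounded quantity over $M-1 \sim T/\tau$ steps multiplied by $\tau$ gives $O(T)$. For the mean, I would test \eqref{eq:scheme-b} with $\psi=1$ and use the $L^4$ and $H^1$ bounds on $\phih^m$ from \eqref{eq:Linf-H1-phi-discrete} to control $\iprd{\chi(\phih^{m+1},\phih^m)}{1}$ and $\iprd{\phitil^{m+\hf}}{1}$; the surface-diffusion term $\aiprd{\phich^{m+\hf}}{1}$ vanishes. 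The estimates \eqref{eq:discrete-laplacian-phi-mu-equiv} and \eqref{eq:discrete-laplacian-phi-mu-equiv-init} follow by testing \eqref{eq:scheme-b} (and \eqref{eq:scheme-b-init}) with $\psi=-\Delta_h\phich^{m+\hf}$ (respectively $-\Delta_h\bphih^{\hf}$), using the definition of the discrete Laplacian \eqref{eq:discrete-laplacian} to convert the surface-diffusion term into $\varepsilon\norm{\Delta_h\phich^{m+\hf}}{}^2$, absorbing it, and estimating the remaining lower-order terms by the uniform $L^4$ and $H^1$ bounds.

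Finally, \eqref{eq:sum-3D-good-phicheck} follows by combining \eqref{eq:discrete-laplacian-phi-mu-equiv} with \eqref{eq:sum-3D-good-mu} for the $\norm{\Delta_h\phich^{m+\hf}}{}^2$ sum, and then feeding this into the discrete Gagliardo-Nirenberg inequality \eqref{eq:infinity-bound}: raising both sides to the power $\frac{4(6-d)}{d}$ converts the $\norm{\Delta_h\phich^{m+\hf}}{}^{d/(2(6-d))}$ factor into a clean $\norm{\Delta_h\phich^{m+\hf}}{}^2$, while the $L^6$ factors are controlled by the $H^1$ bound \eqref{eq:Linf-H1-phi-discrete} via Sobolev embedding. \textbf{The main obstacle} I anticipate is getting the exponents in this last step to match exactly so that the $L^\infty$ sum reduces to the already-bounded $\Delta_h$ sum without any residual high power of an unbounded quantity; the specific choice $\frac{4(6-d)}{d}$ in the statement is clearly engineered precisely so that $\frac{d}{2(6-d)}\cdot\frac{4(6-d)}{d}=2$, and verifying that the companion $L^6$ term also stays uniformly bounded after this exponentiation is the delicate bookkeeping that makes the estimate work in both $d=2$ and $d=3$.
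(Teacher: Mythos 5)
Your proposal is correct and takes essentially the same route as the proof the paper relies on (the paper omits the argument, deferring to \cite[Lemma~2.7]{diegel15b} and \cite[Lemma~2.13]{diegel15}): bound the negative norms of $\dtau\phih^{m+\hf}$ directly from \eqref{eq:scheme-a} using \eqref{eq:Linf-phi-discrete}, \eqref{eq:sum-mu-discrete}, and Assumption~\ref{asmp:initial-stability-2}; split $\muh^{m+\hf}$ into its mean (controlled by taking $\psi=1$ in \eqref{eq:scheme-b}) and its mean-zero part (controlled by Poincar\'e and \eqref{eq:sum-mu-discrete}); take $\psi=-\Delta_h\phich^{m+\hf}$ (resp.\ $-\Delta_h\bphih^{\hf}$) to get \eqref{eq:discrete-laplacian-phi-mu-equiv} and \eqref{eq:discrete-laplacian-phi-mu-equiv-init}; and close with \eqref{eq:infinity-bound}, whose exponent $\frac{4(6-d)}{d}$ is indeed engineered so that $\frac{d}{2(6-d)}\cdot\frac{4(6-d)}{d}=2$. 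One small misattribution: the factor $(T+1)$ in \eqref{eq:sum-3D-good-mu} enters through the mean part, since $|\overline{\muh^{m+\hf}}|\le C$ uniformly in $m$ gives $\tau\sum_{m}|\overline{\muh^{m+\hf}}|^2\le CT$, whereas the mean-zero contribution is bounded by a $T$-independent constant via \eqref{eq:sum-mu-discrete}.
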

The proof of Lemma~\ref{lem-improved-a-priori-stabilities} is very similar to proofs of \cite[Lemma~2.7]{diegel15b} and \cite[Lemma~2.13]{diegel15}. We omit the details for the sake of brevity.

	\subsection{Unconditional $\ell^\infty(0,T;L^\infty)$ Stability of the Discrete Phase Variable}
	\label{subsec-max-norm-stability}

	\begin{lem}
	\label{lem:induction}
Let $(\phih^{m+1}, \muh^{m+\hf}, \buh^{m+1}, p_h^{m+1}) \in S_h\times S_h \times \Xh \times \Soh$ be the unique solution of  \eqref{eq:scheme-a} -- \eqref{eq:scheme-d}, for $1\le m\le M-1$. Then the following estimates hold for any $h,\, \tau >0$:
	\begin{align}
\norm{\Delta_h \phih^{2m}}{}^2 & \le \frac{8}{3} \sum_{k=1}^m \left(\frac{1}{3}\right)^{k-1} \norm{\Delta_h \phich^{2k-\hf}}{}^2 + \left(\frac13\right)^m \cdot \norm{\Delta_h \phih^0}{}^2,
	\label{eq:induction-even}
	\\
\norm{\Delta_h \phih^{2m+1}}{}^2 & \le \frac{8}{3} \sum_{k=1}^m \left(\frac{1}{3}\right)^{k-1} \norm{\Delta_h \phich^{(2k+1)-\hf}}{}^2 + \left(\frac13\right)^m \cdot \norm{\Delta_h \phih^1}{}^2.
	\label{eq:induction-odd}
	\end{align}
	\end{lem}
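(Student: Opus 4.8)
I want to relate $\Delta_h \phih^{2m}$ to $\Delta_h \phich^{2k-\hf}$ by exploiting the definition of the "check" average, $\phich^{m+\hf} = \frac34 \phih^{m+1} + \frac14 \phih^{m-1}$. Since $\Delta_h$ is linear, applying it gives $\Delta_h \phich^{m+\hf} = \frac34 \Delta_h \phih^{m+1} + \frac14 \Delta_h \phih^{m-1}$, which I can rearrange into the one-step recursion
\begin{equation}
\Delta_h \phih^{m+1} = \frac43 \Delta_h \phich^{m+\hf} - \frac13 \Delta_h \phih^{m-1}.
	\nonumber
\end{equation}
The plan is to take $L^2$ norms, use the triangle inequality together with a Young-type splitting to isolate a $\frac13$ contraction factor on the $\norm{\Delta_h \phih^{m-1}}{}^2$ term, and then iterate this two-step-down recursion backward. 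Because the index drops by two each time, the even indices ($m+1 = 2m$) and odd indices ($m+1 = 2m+1$) decouple into two separate chains, which is exactly why the statement is split into \eqref{eq:induction-even} and \eqref{eq:induction-odd}.

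First I would square the recursion and estimate. Writing $a_{m+1} := \norm{\Delta_h \phih^{m+1}}{}^2$ and $c_{m+\hf} := \norm{\Delta_h \phich^{m+\hf}}{}^2$, the identity $\frac34 x + \frac14 y$ with $x = \Delta_h\phih^{m+1}$, $y = \Delta_h\phih^{m-1}$ lets me solve $\Delta_h \phih^{m+1} = \frac43 \Delta_h \phich^{m+\hf} - \frac13 \Delta_h \phih^{m-1}$ and then apply the convexity/Cauchy-Schwarz bound $\norm{\alpha u + \beta v}{}^2 \le \frac{\alpha}{\alpha+\beta}\cdot\frac{(\alpha+\beta)^2}{\alpha}\norm{u}{}^2\cdots$; more cleanly, I expect to use the weighted inequality that yields $a_{m+1} \le \frac83 c_{m+\hf} + \frac13 a_{m-1}$. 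The precise constants $\frac83$ and $\frac13$ in the statement are the tell: these come from the convex combination $\frac34,\frac14$ via an application of the discrete Cauchy--Schwarz (or Jensen) inequality to $\frac34 \Delta_h\phih^{m+1} = \Delta_h\phich^{m+\hf} - \frac14 \Delta_h\phih^{m-1}$, giving $\frac{9}{16}a_{m+1} \le (1+\lambda) c_{m+\hf} + (1+\lambda^{-1})\frac{1}{16}a_{m-1}$ and optimizing $\lambda$ so the coefficient of $a_{m-1}$ becomes $\frac{9}{16}\cdot\frac13$; I would verify $\lambda$ makes the bookkeeping produce exactly $\frac83$ and $\frac13$.

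Next I would iterate $a_{m+1} \le \frac83 c_{m+\hf} + \frac13 a_{m-1}$ down the even (resp. odd) chain. Unrolling from $a_{2m}$ down through $a_{2m-2}, a_{2m-4}, \ldots, a_0$ produces the geometric weights $(\frac13)^{k-1}$ multiplying the successive $c$ terms and the single residual term $(\frac13)^m a_0$, which is precisely the right-hand side of \eqref{eq:induction-even}; the odd chain terminating at $a_1$ gives \eqref{eq:induction-odd}. This is a clean finite induction on $m$, so I would phrase it formally as such: the base case is the one-step recursion itself, and the inductive step substitutes the bound for $a_{2m}$ into $a_{2m+2} \le \frac83 c_{2m+\frac32} + \frac13 a_{2m}$, shifting every geometric weight and picking up one more factor of $\frac13$ on the residual.

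\textbf{Main obstacle.} The genuine subtlety is pinning down the constants $\frac83$ and $\frac13$ so that they come out simultaneously from a single application of the elementary inequality, rather than sacrificing a worse constant on one term to gain on the other. The choice of the Young parameter $\lambda$ must be made so that the $a_{m-1}$ coefficient contracts to exactly $\frac13$ (which is essential — anything $\ge 1$ would make the backward iteration diverge), while the $c_{m+\hf}$ coefficient lands at $\frac83$; getting both from one splitting is the delicate bookkeeping step. Everything else — linearity of $\Delta_h$, the triangle inequality, and the geometric-series unrolling — is routine, and notably the argument uses only the algebraic definition of $\phich^{m+\hf}$ and needs no stability input from the scheme itself, which is why the lemma can be stated for any $h,\tau>0$ with no hypotheses beyond unique solvability.
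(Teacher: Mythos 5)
Your proposal is correct and is essentially the paper's own argument: your weighted-Young bound on $\Delta_h\phih^{m+1} = \frac43\Delta_h\phich^{m+\hf} - \frac13\Delta_h\phih^{m-1}$ (the bookkeeping works out with Young parameter $\lambda=\frac12$) is just the rearranged form of the paper's key estimate \eqref{estimate:induction-odd}, $\norm{\Delta_h \phich^{m+\hf}}{}^2 \ge \frac{3}{8} \norm{\Delta_h \phih^{m+1}}{}^2 - \frac{1}{8} \norm{\Delta_h \phih^{m-1}}{}^2$, which the paper obtains by expanding the square of $\frac34\phih^{m+1}+\frac14\phih^{m-1}$ and applying Cauchy--Schwarz and Young to the cross term, after which both arguments unroll the same two-step contraction along the even and odd index chains. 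One caveat that applies to your write-up and to the paper equally: the unrolling actually produces the weight $\left(\frac13\right)^{m-k}$ on $\norm{\Delta_h \phich^{2k-\hf}}{}^2$ (full weight on the most recent term), not $\left(\frac13\right)^{k-1}$ as literally written in \eqref{eq:induction-even}--\eqref{eq:induction-odd}, and the paper's subsequent use of this lemma in its $\ell^\infty(0,T;L^\infty)$ stability proof is consistent with the $\left(\frac13\right)^{m-k}$ form, so this is an index-ordering typo in the statement rather than a gap in either proof.
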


	\begin{proof}
Using the definition of $\phich^{m+\hf}$, for $1\le m\le M-1$, we have the following inequality:
	\begin{eqnarray}  
\norm{\Delta_h \phich^{m+\hf}}{}^2 &=& \norm{\Delta_h \left(\frac34 \phih^{m+1} + \frac14 \phih^{m-1}\right)}{}^2 \nonumber 
	\\
&=& \frac{9}{16} \norm{\Delta_h \phih^{m+1}}{}^2 + \frac38 \iprd{\Delta_h \phih^{m+1}}{\Delta_h \phih^{m-1}} + \frac1{16} \norm{\Delta_h \phih^{m-1}}{}^2  \nonumber 
	\\
&\ge& \frac{9}{16} \norm{\Delta_h \phih^{m+1}}{}^2 - \frac{3}{16} \norm{\Delta_h \phih^{m+1}}{}^2 - \frac{3}{16} \norm{\Delta_h \phih^{m-1}}{}^2 + \frac{1}{16} \norm{\Delta_h \phih^{m-1}}{}^2 \nonumber 
	\\
&=& \frac{3}{8} \norm{\Delta_h \phih^{m+1}}{}^2 - \frac{1}{8} \norm{\Delta_h \phih^{m-1}}{}^2. 
\label{estimate:induction-odd}
	\end{eqnarray}
Its repeated use gives the result.
	\end{proof}	
	
	\begin{asmp}
	\label{asmp:initial-stability-2-b}
From this point on, we assume the following initial stabilities 
	\begin{equation}
\tau  \norm{\dtau \phih^{\hf}}{}^2 + \norm{\muh^{\hf}}{}^2+ \norm{\rho_h^{\hf}}{}^2+\norm{\Delta_h \phih^0}{}^2+\norm{\Delta_h \phih^1}{}^2  \le  C,
	\end{equation}
where $C>0$ is independent of $h$.
	\end{asmp}

We are now ready to show the main result for this section.

\begin{lem}
	\label{lem:a-priori-stability-dtau-mu}
Let $(\phih^{m+1}, \muh^{m+\hf}, \buh^{m+1}, p_h^{m+1}) \in S_h\times S_h \times \Xh \times \Soh$ be the unique solution of  \eqref{eq:scheme-a} -- \eqref{eq:scheme-d}, for $1\le m\le M-1$. Then the following estimates hold for any $h,\, \tau >0$:
	\begin{align}
\tau \sum_{m=0}^{M-1} \norm{\dtau\phih^{m+\hf}}{}^2 & \le C (T+1),
	\label{eq:sum-dtau-phi}
	\\
\max_{0\le m\le M-1} \norm{\muh^{m+\hf}}{}^2 + \max_{1\le m\le M-1} \bigg[ \norm{\Delta_h \phich^{m+\hf}}{}^2  + \norm{\phich^{m+\hf}}{L^\infty}^{\frac{4(6-d)}{d}} \bigg] &\le C(T+1),
	\label{eq:Linf-mu-phich} 
	\\
\max_{0\le m\le M} \bigg[ \norm{\Delta_h \phih^m}{}^2  + \norm{\phih^m}{L^\infty}^{\frac{4(6-d)}{d}} \bigg] &\le C(T+1),
	\label{eq:Linf-phi} 
	\end{align}
for some constant $C>0$ that is independent of $h$, $\tau$, and $T$.
	\end{lem}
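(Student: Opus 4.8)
The plan is to reduce all three estimates to a single $\ell^\infty$ bound on $\norm{\Delta_h \phih^{\ell+1}}{}^2$, obtained from a dynamical energy identity combined with the backward induction of Lemma~\ref{lem:induction} and the non-standard discrete Gronwall inequality of Appendix~\ref{appen:A}. The starting observation is that $\norm{\phih^m}{H^1} \le C$ (Lemma~\ref{lem-a-priori-stability-trivial}) gives $\norm{\phih^m}{L^6} \le C$ by Sobolev embedding for $d \le 3$, so the nonlinearity satisfies $\norm{\chi(\phih^{m+1},\phih^m)}{} \le C$ uniformly; together with \eqref{eq:discrete-laplacian-phi-mu-equiv} this makes $\norm{\muh^{m+\hf}}{}^2$ and $\varepsilon^2\norm{\Delta_h\phich^{m+\hf}}{}^2$ equivalent up to an additive constant. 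Since $\phich^{m+\hf} = \frac34\phih^{m+1}+\frac14\phih^{m-1}$, it then suffices to control $\norm{\Delta_h\phih^m}{}^2$. The central step is to test \eqref{eq:scheme-a} with $\nu = \dtau\phih^{m+\hf}$ and \eqref{eq:scheme-b} with $\psi = -\Delta_h\dtau\phih^{m+\hf}\in\Soh$. Using the self-adjointness of $\Delta_h$ on $S_h$, the second choice rewrites the diffusion term as
\[
\aiprd{\muh^{m+\hf}}{\dtau\phih^{m+\hf}} = \varepsilon\iprd{\Delta_h\phich^{m+\hf}}{\Delta_h\dtau\phih^{m+\hf}} - \frac1\varepsilon\iprd{\chi(\phih^{m+1},\phih^m)}{\Delta_h\dtau\phih^{m+\hf}} + \frac1\varepsilon\iprd{\phitil^{m+\hf}}{\Delta_h\dtau\phih^{m+\hf}},
\]
so that after substitution into the first identity the leading term $\varepsilon^2\iprd{\Delta_h\phich^{m+\hf}}{\Delta_h\dtau\phih^{m+\hf}}$ obeys exactly the $\Delta_h$-analogue of the telescoping identity \eqref{eq:identity-check}, with $\Delta_h\phih$ in place of $\nabla\phih$.

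Applying $\tau\sum_{m=1}^{\ell}$ then produces, on the left-hand side, the dissipation $\tau\sum\norm{\dtau\phih^{m+\hf}}{}^2$ together with $\frac{\varepsilon^2}{2}\norm{\Delta_h\phih^{\ell+1}}{}^2$ and nonnegative second-difference terms. The remaining terms are treated by summation by parts in time. The boundary contributions $\iprd{\chi(\phih^{\ell+1},\phih^{\ell})}{\Delta_h\phih^{\ell+1}}$ and $\iprd{\phitil^{\ell+\hf}}{\Delta_h\phih^{\ell+1}}$ are absorbed into $\frac{\varepsilon^2}{2}\norm{\Delta_h\phih^{\ell+1}}{}^2$ by Young's inequality and $\norm{\chi}{}\le C$; the interior increments $\chi(\phih^{m+1},\phih^m)-\chi(\phih^m,\phih^{m-1})$ and $\phitil^{m+\hf}-\phitil^{(m-1)+\hf}$ are $O(\tau)$ multiples of the phase increments $\tau\dtau\phih$, and hence reproduce a piece of the $\norm{\dtau\phih}{}^2$ dissipation (absorbed) plus a term $C\tau\sum_m c_m\norm{\Delta_h\phih^m}{}^2$ in which $c_m$ is controlled by a power of $\norm{\phih^m}{L^\infty}$ via the discrete Gagliardo--Nirenberg inequality \eqref{eq:infinity-bound}. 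The convection term $\tau\sum\bform{\phitil^{m+\hf}}{\bbuh^{m+\hf}}{\dtau\phih^{m+\hf}}$ is split with Young's inequality, absorbing $\norm{\dtau\phih}{}^2$ and leaving $C\tau\sum\norm{\nabla\phitil^{m+\hf}}{L^4}^2\norm{\bbuh^{m+\hf}}{L^4}^2$, which is bounded using \eqref{eq:grad-v-L4-bound}, the Sobolev embedding for $\bbuh^{m+\hf}$, and the velocity bound $\tau\sum\norm{\nabla\bbuh^{m+\hf}}{}^2\le C$ from \eqref{eq:sum-mu-discrete}; this again feeds a $C\tau\sum_m c_m\norm{\Delta_h\phih^m}{}^2$ contribution.

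Collecting these bounds yields an inequality of the form
\[
\norm{\Delta_h\phih^{\ell+1}}{}^2 + \tau\sum_{m=1}^{\ell}\norm{\dtau\phih^{m+\hf}}{}^2 \le C(T+1) + C\tau\sum_{m=1}^{\ell} c_m\norm{\Delta_h\phih^m}{}^2,
\]
where the $m=0,1$ data and the residual $\rho_h^{\hf}$ are handled by Assumption~\ref{asmp:initial-stability-2-b} and where $\tau\sum_m c_m\le C(T+1)$ by \eqref{eq:sum-3D-good-phicheck}. This is the \emph{main obstacle}: a direct discrete Gronwall would only give growth $C\exp(C(T+1))$, exponential in $T$, which is precisely the difficulty caused by the $3/4$--$1/4$ coefficient split. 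Instead I substitute the backward induction Lemma~\ref{lem:induction}, which replaces $\norm{\Delta_h\phih^m}{}^2$ by a geometrically weighted sum of the $\norm{\Delta_h\phich^{2k-\hf}}{}^2$ plus an exponentially small multiple of $\norm{\Delta_h\phih^0}{}^2$ or $\norm{\Delta_h\phih^1}{}^2$, turning the right-hand side into the double sum with inner weight $\sum(\tfrac13)^{m-k}$ anticipated in the introduction. The non-standard discrete Gronwall inequality, Lemma~\ref{lem-discrete-gronwall-2}, is tailored to exactly this structure and exploits the geometric decay to give $\max_{\ell}\norm{\Delta_h\phih^{\ell+1}}{}^2 \le C(T+1)$, with only linear growth in $T$.

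Everything else then follows. The bound $\tau\sum\norm{\dtau\phih^{m+\hf}}{}^2\le C(T+1)$ of \eqref{eq:sum-dtau-phi} is read off from the left-hand dissipation (the $m=0$ term being controlled by Assumption~\ref{asmp:initial-stability-2-b}). The $\ell^\infty$ bounds on $\norm{\Delta_h\phich^{m+\hf}}{}^2$ and $\norm{\muh^{m+\hf}}{}^2$ in \eqref{eq:Linf-mu-phich} follow from $\phich^{m+\hf}=\frac34\phih^{m+1}+\frac14\phih^{m-1}$ and the equivalence \eqref{eq:discrete-laplacian-phi-mu-equiv}. Finally, applying the discrete Gagliardo--Nirenberg inequality \eqref{eq:infinity-bound} to $\phih^m$ and to $\phich^{m+\hf}$, together with the uniform bound $\norm{\cdot}{L^6}\le C$, converts these discrete $H^2$ bounds into $\norm{\phih^m}{L^\infty}^{\frac{4(6-d)}{d}}\le C(T+1)$ and $\norm{\phich^{m+\hf}}{L^\infty}^{\frac{4(6-d)}{d}}\le C(T+1)$, which completes \eqref{eq:Linf-mu-phich} and \eqref{eq:Linf-phi}.
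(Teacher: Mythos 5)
Your architecture is genuinely different from the paper's: the paper never tests \eqref{eq:scheme-b} with $-\Delta_h\dtau\phih^{m+\hf}$; instead it subtracts \eqref{eq:scheme-b} from itself at consecutive time steps, tests with $\frac34\muh^{m+\hf}+\frac14\muh^{m-\frac32}$, and pairs this with a $\frac34$/$\frac14$ weighted average of \eqref{eq:scheme-a} tested with $\tau\left(\frac34\dtau\phih^{m+\hf}+\frac14\dtau\phih^{m-\frac32}\right)$, so the quantity tracked through the Gronwall argument is $\norm{\muh^{m+\hf}}{}^2$, not $\norm{\Delta_h\phih^m}{}^2$. Your discrete-Laplacian energy identity and its telescoping structure are correct, and your treatment of the convection term is sound (it produces the coefficient $\norm{\nabla\bbuh^{m+\hf}}{}^2$, whose time sum is $T$-independent by \eqref{eq:sum-mu-discrete}). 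The genuine gap is in the nonlinear increments. After summation by parts you must control $\sum_m\iprd{\chi(\phih^{m+1},\phih^m)-\chi(\phih^m,\phih^{m-1})}{\Delta_h\phih^m}=\frac14\sum_m\iprd{\omega_h^m\left(\phih^{m+1}-\phih^{m-1}\right)}{\Delta_h\phih^m}$, and the only dissipation your identity generates is $\tau\sum\norm{\dtau\phih^{m+\hf}}{}^2$, an $L^2$-in-space handle on the increments. H\"older then forces the remaining factor to be $\norm{\omega_h^m}{L^\infty}\norm{\Delta_h\phih^m}{}$: you cannot use $\norm{\omega_h^m}{L^3}$ (that would require $\norm{\dtau\phih}{L^6}$, i.e.\ $\norm{\nabla\dtau\phih}{}$, which is not controlled), and putting $L^6$ on $\Delta_h\phih^m$ requires an inverse inequality that destroys unconditionality. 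So your Gronwall coefficient is $c_m\sim\norm{\omega_h^m}{L^\infty}^2\sim\max_k\norm{\phih^k}{L^\infty}^4$, whose time sum grows linearly in $T$, exactly as you concede.

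But then the last step fails: Lemma~\ref{lem-discrete-gronwall-2} does not convert this into linear growth. Its conclusion \eqref{eq:gronwall-conclusion-2} is $(C_2+a_0C_1)\exp\left(C_1/(1-\alpha)\right)$; the geometric weight $\alpha^{m-j}$ only replaces $C_1$ in the exponent by $A_\alpha C_1$ with $A_\alpha=(1-\alpha)^{-1}$ — it does not remove $C_1$ from the exponent. With $C_1=\tau\sum_m c_m\le C(T+1)$ you obtain $C(T+1)\exp\left(C(T+1)\right)$, exponential in $T$, which does not prove the stated bound $C(T+1)$. Linear growth requires the multiplicative coefficient to have a $T$-\emph{independent} integral, and this is precisely what the paper's $\mu$-level argument buys: there the cubic term appears as $\iprd{\omega_h^m\dtau\phih^{m\pm\hf}}{3\muh^{m+\hf}+\muh^{m-\frac32}}$ and is bounded by $\norm{\omega_h^m}{L^3}\norm{\dtau\phih}{}\norm{3\muh^{m+\hf}+\muh^{m-\frac32}}{L^6}$ with $\norm{\omega_h^m}{L^3}\le C$ from the $H^1$ bound, so the $\mu$ factor is absorbed \emph{additively} into $C_2$ via the already-known $\tau\sum\norm{\muh^{m+\hf}}{}^2\le C(T+1)$ and $\tau\sum\norm{\nabla\muh^{m+\hf}}{}^2\le C$ (\eqref{eq:sum-3D-good-mu}, \eqref{eq:sum-mu-discrete}), leaving only $\norm{\nabla\bbuh^{m+\hf}}{}^2$ in the multiplicative position. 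Your $\Delta_h\phih$-based identity has no such linear slot for the cubic term, so the defect is structural rather than a matter of sharper H\"older exponents. (Two smaller repairs you would also need: \eqref{eq:sum-3D-good-phicheck} bounds $\phich^{m+\hf}$, not $\phih^m$, so even $\tau\sum c_m\le C(T+1)$ requires Lemma~\ref{lem:induction} first; and \eqref{eq:discrete-laplacian-phi-mu-equiv} is stated in the direction $\varepsilon\norm{\Delta_h\phich}{}^2\le\norm{\muh}{}^2+C$, whereas the converse you need for \eqref{eq:Linf-mu-phich} must be derived separately from \eqref{eq:scheme-b} with $\psi=\muh^{m+\hf}$ and $\norm{\chi(\phih^{m+1},\phih^m)}{}\le C$.)
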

	
	\begin{proof}
The proof will be completed in two parts. 
	
\noindent \textbf{Part 1:} ($m=1$) Subtracting \eqref{eq:scheme-b-init} from \eqref{eq:scheme-b} with $m=1$, we obtain
	\begin{align}
\iprd{\muh^{\frac32} - \muh^{\hf}}{\psi} =& \, \varepsilon \,\aiprd{\phich^{\frac32} - \bphih^{\hf}}{\psi} - \frac{1}{\varepsilon} \iprd{ \phitil^{\frac32}-\bphih^{\hf}}{\psi} + \frac{1}{\varepsilon} \iprd{\chi\left(\phih^2, \phih^1\right) - \chi\left(\phih^1, \phih^0\right)}{\psi} 
	\nonumber
	\\
=&\, \varepsilon \, \aiprd{\frac34 \tau \dtau \phih^{\frac32} + \frac14 \tau \dtau \phih^{\hf}}{\psi} - \frac{1}{\varepsilon} \iprd{\tau\dtau\phih^{\hf}}{\psi} 
	\nonumber
	\\
&+ \frac{1}{\varepsilon} \iprd{\chi\left(\phih^2, \phih^1\right) - \chi\left(\phih^1, \phih^0\right)}{\psi} .
	\label{eq:scheme-b-staggered-first-step}	
	\end{align}
Additionally, we take a weighted average of \eqref{eq:scheme-a} with $m=1$ and \eqref{eq:scheme-a-init} with the weights $\frac34$ and $\frac14$, respectively, to obtain,
	\begin{align}
\iprd{\frac34 \dtau \phih^{\frac32} + \frac14 \dtau \phih^{\hf}}{\nu} = & \ - \varepsilon \, \aiprd{\frac34 \muh^{\frac32} + \frac14 \muh^{\hf}}{\nu} - \frac34 \bform{\phitil^{\frac32}}{\bbuh^{\frac32}}{\nu} 
	\nonumber
	\\
& - \frac14 \bform{\bphih^\hf}{\bbuh^{\hf}}{\nu} + \frac{1}{4}\iprd{\rho_h^\hf}{\nu}.
	\label{eq:scheme-a-staggered-first-step}	
	\end{align}
Taking $\psi = \frac34 \muh^{\frac32} + \frac14 \muh^{\hf}$ in \eqref{eq:scheme-b-staggered-first-step}, $\nu = \frac{3\tau}{4} \dtau \phih^{\frac32} + \frac{\tau}{4} \dtau \phih^{\hf}$ in \eqref{eq:scheme-a-staggered-first-step}, and adding the results yields 
	\begin{align*}
& \hspace{-0.55in} \iprd{\muh^{\frac32} - \muh^{\hf}}{\frac34 \muh^{\frac32} + \frac14 \muh^{\hf}} +  \tau \norm{\frac34 \dtau \phih^{\frac32} + \frac14 \dtau \phih^{\hf}}{}^2 
	\\
= & \, - \frac{1}{\varepsilon} \iprd{\phih^1 - \phih^0}{\frac34 \muh^{\frac32} + \frac14 \muh^{\hf}} + \frac{1}{4 \varepsilon} \iprd{\chi\left(\phih^2, \phih^1\right) - \chi\left(\phih^1, \phih^0\right)}{3 \muh^{\frac32} + \muh^{\hf}} 
	\\
&- \frac34 \bform{\phitil^{\frac32}}{\bbuh^{\frac32}}{ \frac{3\tau}{4} \dtau \phih^{\frac32} + \frac{\tau}{4} \dtau \phih^{\hf}} - \frac14 \bform{\bphih^\hf}{\bbuh^{\hf}}{ \frac{3\tau}{4} \dtau \phih^{\frac32} + \frac{\tau}{4} \dtau \phih^{\hf}}
	\\
& +  \frac{1}{4}\iprd{\rho_h^\hf}{ \frac{3\tau}{4} \dtau \phih^{\frac32} + \frac{\tau}{4} \dtau \phih^{\hf}}
	\\
\le& \ \frac14 \norm{\muh^{\frac32}}{}^2 + C \norm{\muh^{\hf}}{}^2 + C \norm{\phih^1}{}^2 + C \norm{\phih^0}{}^2 + C \norm{\chi\left(\phih^2, \phih^1\right)}{}^2 + C \norm{\chi\left(\phih^1, \phih^0\right)}{}^2 
	\\
& + \frac{3\tau}{4} \norm{\nabla\phitil^{\frac32}}{L^4} \norm{\bbuh^{\frac32}}{L^4} \norm{\frac{3}{4} \dtau \phih^{\frac32}+ \frac{1}{4} \dtau \phih^{\hf}}{} + \frac{\tau}{4}  \norm{\nabla\bphih^\hf}{L^4} \norm{\bbuh^{\hf}}{L^4} \norm{\frac{3}{4} \dtau \phih^{\frac32} + \frac{1}{4} \dtau \phih^{\hf}}{} 
	\\
& + \frac{\tau}{4}  \norm{\rho_h^{\hf}}{} \norm{\frac{3}{4} \dtau \phih^{\frac32} + \frac{1}{4} \dtau \phih^{\hf}}{}
	\\
\le& \  C + \frac14 \norm{\muh^{\frac32}}{}^2 + C \tau \norm{\frac{3}{4} \dtau \phih^{\frac32}+ \frac{1}{4} \dtau \phih^{\hf}}{} \norm{\nabla \bbuh^{\frac32}}{} \left(\norm{\nabla\phitil^{\frac32}}{} + \norm{\Delta_h \phitil^{\frac32}}{}\right)
	\\
&+ C \tau \norm{\frac{3}{4} \dtau \phih^{\frac32}+ \frac{1}{4} \dtau \phih^{\hf}}{} \norm{\nabla \bbuh^{\frac12}}{} \left(\norm{\nabla\bphih^{\frac12}}{} + \norm{\Delta_h \bphih^{\frac12}}{}\right) + \frac{\tau}{6} \norm{\frac{3}{4} \dtau \phih^{\frac32} + \frac{1}{4} \dtau \phih^{\hf}}{}
	\\
\le& \ C+ \frac14 \norm{\muh^{\frac32}}{}^2 + \frac{3\tau}{6} \norm{\frac34 \dtau \phih^{\frac32}+ \frac14 \dtau \phih^{\hf}}{}^2 + C \tau \norm{\nabla \bbuh^{\frac32}}{}^2 + C \tau \norm{\nabla \bbuh^{\frac12}}{}^2
	\\
\le& \  C+ \frac14 \norm{\muh^{\frac32}}{}^2 + \frac{\tau}{2} \norm{\frac34 \dtau \phih^{\frac32}+ \frac14 \dtau \phih^{\hf}}{}^2 + C \tau \norm{\nabla \bbuh^{\frac32}}{}^2,
	\end{align*}
where we have used Young's inequality, the embedding $H^1 \hookrightarrow L^6$, estimates \eqref{eq:Linf-H1-phi-discrete}  and \eqref{eq:grad-v-L4-bound} and Assumption~\ref{asmp:initial-stability-2-b}. Considering Assumption \ref{asmp:initial-stability-2-b}, estimate~\eqref{eq:sum-mu-discrete}, and the following estimates
	\begin{align*}
\norm{\frac34 \dtau \phih^{\frac32} + \frac14 \dtau \phih^{\hf}}{}^2 \ge & \ \frac38 \norm{\dtau \phih^{\frac32}}{}^2 - \frac18 \norm{\dtau \phih^{\hf}}{}^2  \, \, \mbox{(similar to (\ref{estimate:induction-odd})) } ,   
	\\
\iprd{\muh^{\frac32} - \muh^{\hf}}{\frac34 \muh^{\frac32} + \frac14 \muh^{\hf}} = & \  \frac34 \norm{\muh^{\frac32}}{}^2 - \hf \iprd{\muh^{\frac32}}{\muh^{\hf}} - \frac14 \norm{\muh^{\hf}}{}^2
\ge \hf \norm{\muh^{\frac32}}{}^2 - \hf \norm{\muh^{\hf}}{}^2,
	\end{align*}
we have, 
	\begin{align}
\frac14 \norm{\muh^{\frac32}}{}^2 + \frac{3\tau}{16} \norm{\dtau \phih^{\frac32}}{}^2 \le C \norm{\muh^{\hf}}{}^2 + \frac{\tau}{16} \norm{\dtau \phih^{\hf}}{}^2 + C \tau \norm{\nabla \bbuh^{\frac32}}{}^2  + C \le C.
	\label{eq:mu-control-second-step}
	\end{align}

Now, using \eqref{eq:discrete-laplacian-phi-mu-equiv}, \eqref{eq:infinity-bound}, the embedding $H^1(\Omega) \hookrightarrow L^6(\Omega)$, and \eqref{eq:Linf-H1-phi-discrete}, we have
	\begin{align*}
\norm{\Delta_h \phich^{\frac32}}{}^2 + \norm{\phich^\frac32}{L^\infty}^{\frac{4(6-d)}{d}} \le C.
	\end{align*}
Using Lemma \ref{lem:induction}, \eqref{eq:infinity-bound}, the embedding $H^1(\Omega) \hookrightarrow L^6(\Omega)$, and \eqref{eq:Linf-H1-phi-discrete}, we obtain
	\begin{align*}
\norm{\Delta_h \phih^{2}}{}^2 + \norm{\phih^2}{L^\infty}^{\frac{4(6-d)}{d}} \le C.
	\end{align*}

\noindent \textbf{Part 2:} ($2 \le m \le M-1$)\\

For $2 \le m \le M-1$, we subtract \eqref{eq:scheme-b} from itself at consecutive time steps to obtain
	\begin{align}
\iprd{\muh^{m+\hf} - \muh^{m-\hf}}{\psi} =& \, \varepsilon \,\aiprd{\phich^{m+\hf} - \phich^{m-\hf}}{\psi} - \frac{1}{\varepsilon} \iprd{\phitil^{m+\hf} - \phitil^{m-\hf}}{\psi}
	\nonumber
	\\
&+ \frac{1}{\varepsilon} \iprd{\chi\left(\phih^{m+1}, \phih^m\right) - \chi\left(\phih^m, \phih^{m-1}\right)}{\psi} 
	\nonumber
	\\
=&\, \varepsilon \,\aiprd{\frac34 \tau \dtau \phih^{m+\hf} + \frac14 \tau \dtau \phih^{m-\frac32}}{\psi} - \frac{1}{\varepsilon} \iprd{\frac32 \tau \dtau \phih^{m-\hf} - \hf \tau \dtau \phih^{m-\frac32}}{\psi}
	\nonumber
	\\
&+ \frac{1}{4 \varepsilon} \iprd{\omega^m_h \left(\phih^{m+1} - \phih^{m-1}\right)}{\psi} , 
	\label{eq:scheme-b-staggered}
	\end{align}
for all $\psi \in S_h$, where 
	\begin{align*}
\omega^m_h := \omega\left(\phih^{m+1}, \phih^m, \phih^{m-1}\right) := \left(\phih^{m+1}\right)^2 + \left(\phih^{m}\right)^2 + \left(\phih^{m-1}\right)^2 + \phih^{m+1} \phih^m + \phih^{m+1} \phih^{m-1} + \phih^m \phih^{m-1}.
	\end{align*}
We note that using the $H^1(\Omega) \hookrightarrow L^6(\Omega)$ embedding, we achieve the following bound,  
	\begin{align*}
\norm{\omega^m_h}{L^3} =& \, \norm{\left(\phih^{m+1}\right)^2 + \left(\phih^{m}\right)^2 + \left(\phih^{m-1}\right)^2 + \phih^{m+1} \phih^m + \phih^{m+1} \phih^{m-1} + \phih^m \phih^{m-1}}{L^3}
	\\
\le& \, C \norm{\phih^{m+1}}{L^6}^2  + C \norm{\phih^{m}}{L^6}^2 + C \norm{\phih^{m-1}}{L^6}^2
	\\
\le& \, C \norm{\phih^{m+1}}{H^1}^2 + C \norm{\phih^{m}}{H^1}^2 + C \norm{\phih^{m-1}}{H^1}^2 \le C . 
	\end{align*}

Now, for all $2 \le m \le M-1$, we take a weighted average of the $m+\hf$ and $m-\frac32$ time steps with the weights $\frac34$ and $\frac14$ of \eqref{eq:scheme-a}, respectively, to obtain,
	\begin{align}
	\nonumber
\iprd{\frac34 \dtau \phih^{m+\hf} + \frac14 \dtau \phih^{m-\frac32}}{\nu} =& \, - \varepsilon \, \aiprd{\frac34 \muh^{m+\hf} + \frac14 \muh^{m-\frac32}}{\nu} - \frac34 \bform{\phitil^{m+\hf}}{\bbuh^{m+\hf}}{\nu} 
	\\
&- \frac14 \bform{\phitil^{m-\frac32}}{\bbuh^{m-\frac32}}{\nu} ,  \, \, \forall  \nu \in S_h . 
	\label{eq:scheme-a-staggered}
	\end{align}

Taking $\psi = \frac34 \muh^{m+\hf} + \frac14 \muh^{m-\frac32}$ in \eqref{eq:scheme-b-staggered}, $\nu = \tau \left(\frac34 \dtau \phih^{m+\hf} + \frac14 \dtau \phih^{m-\frac32} \right)$ in \eqref{eq:scheme-a-staggered}, and adding the results yields 
	\begin{align*}
& \hspace{-0.3in} \iprd{\muh^{m+\hf} - \muh^{m-\hf}}{\frac34 \muh^{m+\hf} + \, \frac14 \muh^{m-\frac32}} + \tau \norm{\frac34 \dtau \phih^{m+\hf} + \frac14 \dtau \phih^{m-\frac32}}{}^2 
	\\
=& \, -\frac{\tau}{\varepsilon} \iprd{\frac32 \dtau \phih^{m-\hf} - \hf \dtau \phih^{m-\frac32}}{\frac34 \muh^{m+\hf} + \frac14 \muh^{m-\frac32}}
+ \frac{\tau}{4 \varepsilon} \iprd{\omega^m_h \dtau \phih^{m+\hf} }{\frac34 \muh^{m+\hf} + \frac14 \muh^{m-\frac32}}
	\\
&+ \frac{\tau}{4 \varepsilon} \iprd{\omega^m_h \dtau \phih^{m-\hf}}{\frac34 \muh^{m+\hf} + \frac14 \muh^{m-\frac32}}
- \frac{3\tau}{4} \bform{\phitil^{m+\hf}}{\bbuh^{m+\hf}}{\frac34 \dtau \phih^{m+\hf} + \frac14 \dtau \phih^{m-\frac32}} 
	\\
&- \frac{\tau}{4} \bform{\phitil^{m-\frac32}}{\bbuh^{m-\frac32}}{\frac34 \dtau \phih^{m+\hf} + \frac14 \dtau \phih^{m-\frac32}}
	\\
\le& \, \frac{3 \tau}{8 \varepsilon} \norm{\dtau \phih^{m-\hf}}{} \cdot \norm{3\muh^{m+\hf} + \muh^{m-\frac32}}{}
+ \frac{\tau}{8 \varepsilon} \norm{\dtau \phih^{m-\frac32}}{} \cdot \norm{3 \muh^{m+\hf} + \muh^{m-\frac32}}{}
	\\
&+ \frac{\tau}{16 \varepsilon} \norm{\omega^m_h}{L^3} \left( \norm{\dtau \phih^{m+\hf}}{} \cdot \norm{3\muh^{m+\hf} + \muh^{m-\frac32}}{L^6}
+ \norm{\dtau \phih^{m-\hf}}{} \cdot \norm{3\muh^{m+\hf} + \muh^{m-\frac32}}{L^6} \right) 
	\\
&- \frac{3\tau}{4} \bform{\phitil^{m+\hf}}{\bbuh^{m+\hf}}{\frac34 \dtau \phih^{m+\hf} + \frac14 \dtau \phih^{m-\frac32}} 
- \frac{\tau}{4} \bform{\phitil^{m-\frac32}}{\bbuh^{m-\frac32}}{\frac34 \dtau \phih^{m+\hf} + \frac14 \dtau \phih^{m-\frac32}}.
	\end{align*}
Now we bound the trilinear form $b(\, \cdot \, , \, \cdot \, , \, \cdot \, )$. Using \eqref{eq:grad-v-L4-bound}, H\"older's inequality, and (\ref{eq:Linf-phi-discrete}), the following estimates are available: 
	\begin{align}
& \hspace{-0.3in} \left| \bform{\phitil^{m+\hf}}{\bbuh^{m+\hf}}{\frac34 \dtau \phih^{m+\hf} + \frac14 \dtau \phih^{m-\frac32}} \right| \le  \norm{\nabla\phitil^{m+\hf}}{L^4}\norm{\bbuh^{m+\hf}}{L^4}\norm{\frac34 \dtau \phih^{m+\hf} + \frac14 \dtau \phih^{m-\frac32}}{}
	\nonumber
	\\
\le& \,  C \norm{\frac34 \dtau \phih^{m+\hf} + \frac14 \dtau \phih^{m-\frac32}}{} \norm{\nabla \bbuh^{m+\hf}}{}
 \left(\norm{\nabla\phitil^{m+\hf}}{} + \norm{\Delta_h \phitil^{m+\hf}}{}\right)
	\nonumber
	\\
\le& \,  \frac{1}{2} \norm{\frac34 \dtau \phih^{m+\hf} + \frac14 \dtau \phih^{m-\frac32}}{}^2 + C \norm{\nabla \bbuh^{m+\hf}}{}^2 
+ C \norm{\nabla \bbuh^{m+\hf}}{}^2 \norm{\Delta_h \phitil^{m+\hf}}{}^2 , 
	\label{eq:chns-b-form-stability-estimate-b}
 	\end{align}
and, similarly, 
	\begin{align}
& \hspace{-0.5in} \left| \bform{\phitil^{m-\frac32}}{\buh^{m-\frac32}}{\frac34 \dtau \phih^{m+\hf} + \frac14 \dtau \phih^{m-\frac32}} \right| \nonumber 
\\
\le & \,  \frac{1}{2} \norm{\frac34 \dtau \phih^{m+\hf} + \frac14 \dtau \phih^{m-\frac32}}{}^2 + C \norm{\nabla \bbuh^{m-\frac32}}{}^2 
+ C \norm{\nabla \bbuh^{m-\frac32}}{}^2 \norm{\Delta_h \phitil^{m-\frac32}}{}^2 , 
	\label{eq:chns-b-form-stability-estimate-c}
	\end{align}
for any $2 \le m \le M-1$. 	 Therefore, we arrive at 
\begin{align}
&\hspace{-0.5in} \iprd{\muh^{m+\hf} - \muh^{m-\hf}}{\frac34 \muh^{m+\hf} + \frac14 \muh^{m-\frac32}} + \frac{\tau}{2} \norm{\frac34 \dtau \phih^{m+\hf} + \frac14 \dtau \phih^{m-\frac32}}{}^2 
	\nonumber
	\\
\le& \, \frac{\tau}{8} \norm{\dtau \phih^{m+\hf}}{}^2 + \frac{\tau}{32} \norm{\dtau \phih^{m-\hf}}{}^2 + \frac{\tau}{32} \norm{\dtau \phih^{m-\frac32}}{}^2 
+ C \tau \norm{\muh^{m+\hf}}{H^1}^2 + C \tau \norm{\muh^{m-\frac32}}{H^1}^2 \nonumber 
\\
& + C \tau \norm{\nabla \bbuh^{m+\hf}}{}^2 + C \tau \norm{\nabla \bbuh^{m+\hf}}{}^2 \norm{\Delta_h \phitil^{m-\hf}}{}^2 \nonumber 
\\
& + C \tau \norm{\nabla \bbuh^{m-\frac32}}{}^2 
+ C \tau \norm{\nabla \bbuh^{m-\frac32}}{}^2 \norm{\Delta_h \phitil^{m-\frac32}}{}^2 , \, \, 
\forall \,  2 \le m \le M-1 . 
	\label{eq:Linf-muh-step}
	\end{align}

Furthermore, we use Lemma \ref{lem:induction} and \eqref{eq:discrete-laplacian-phi-mu-equiv} to derive the following inequalities: 
	\begin{align}
\norm{\Delta_h \phitil^{(2k+1)+\hf}}{}^2 &\le C \norm{\Delta_h \phih^{2k+1}}{}^2 + C \norm{\Delta_h \phih^{2k}}{}^2 
	\nonumber
	\\
&\le C \sum_{j=1}^k \left( \frac13 \right)^{k-j} \left( \norm{\Delta_h \phich^{(2j+1)-\hf}}{}^2 + \norm{\Delta_h \phich^{(2j)-\hf}}{}^2 \right) + C 
	\nonumber
	\\
&\le C \sum_{j=1}^k \left( \frac13 \right)^{k-j} \left( \norm{\muh^{(2j+1)-\hf}}{}^2 + \norm{\muh^{(2j)-\hf}}{}^2 \right) + C , 	
	\label{eq:discrete-laplacian-phitil-odd}
	\\
\norm{\Delta_h \phitil^{(2k)+\hf}}{}^2 &\le C \norm{\Delta_h \phih^{2k}}{}^2 + C \norm{\Delta_h \phih^{2k-1}}{}^2 
	\nonumber
	\\
&\le C  \sum_{j=1}^k \left( \frac13 \right)^{k-j} \left( \norm{\Delta_h \phich^{(2j)-\hf}}{}^2 + \norm{\Delta_h \phich^{(2j-1)-\hf}}{}^2 \right) + C
	\nonumber
	\\
&\le C  \sum_{j=1}^k \left( \frac13 \right)^{k-j} \left( \norm{\muh^{(2j)-\hf}}{}^2 +  \norm{\muh^{(2j-1)-\hf}}{}^2 \right) + C.
	\label{eq:discrete-laplacian-phitil-even}
	\end{align}

Applying $\sum_{m=2}^{\ell}$ to \eqref{eq:Linf-muh-step} and using the following properties
	\begin{align*}
\iprd{\muh^{m+\hf} - \muh^{m-\hf}}{\frac34 \muh^{m+\hf} + \frac14 \muh^{m-\frac32}} =& \, \hf \iprd{\muh^{m+\hf} - \muh^{m-\hf}}{\muh^{m+\hf} + \muh^{m-\hf}}
	\\
&+ \frac14 \iprd{\muh^{m+\hf} - \muh^{m-\hf}}{\muh^{m+\hf} - 2\muh^{m-\hf} + \muh^{m-\frac32}}
	\\
=& \, \hf \norm{\muh^{m+\hf}}{}^2 - \hf \norm{\muh^{m-\hf}}{}^2 + \frac18 \norm{\muh^{m+\hf} - \muh^{m-\hf}}{}^2 
	\\
&- \frac18 \norm{\muh^{m-\hf} - \muh^{m-\frac32}}{}^2 + \frac18 \norm{\muh^{m+\hf} - 2 \muh^{m-\hf} + \muh^{m-\frac32}}{}^2,
	\\
\norm{\frac34 \dtau \phih^{m+\hf} + \frac14 \dtau \phih^{m-\frac32}}{}^2 \ge& \, \frac38 \norm{\dtau \phih^{m+\hf}}{}^2 - \frac18 \norm{\dtau \phih^{m-\frac32}}{}^2
\, \, \mbox{(similar to (\ref{estimate:induction-odd})) }  , 
	\end{align*}
we conclude that 
	\begin{align*}
\hf \norm{\muh^{\ell+\hf}}{}^2 + & \ \frac{\tau}{16} \sum_{m=2}^{\ell} \norm{\dtau \phih^{m+\hf}}{}^2 
	\nonumber
	\\
\le &  \ \frac18 \norm{\muh^{\frac32} - \muh^{\hf}}{}^2 + \frac{\tau}{32} \norm{\dtau \phih^{\frac32}}{}^2 + \frac{5 \tau}{32} \norm{\dtau \phih^{\hf}}{}^2  
	\\
& + C \tau \sum_{m=0}^{\ell}\norm{\muh^{m+\hf}}{H^1}^2 + C \tau \sum_{m=0}^{\ell}\norm{\nabla \bbuh^{m+\hf}}{}^2 \cdot \sum_{j=1}^m \left( \frac13 \right)^{m-j} \norm{\muh^{j-\hf}}{}^2 
	\\
&+ C \tau \sum_{m=0}^{\ell}\norm{\nabla \bbuh^{m+\hf}}{}^2 \cdot \sum_{j=1}^m \left( \frac13 \right)^{m-j} \norm{\muh^{j-\frac32}}{}^2 + C \tau \sum_{m=0}^{\ell}\norm{\nabla \bbuh^{m+\hf}}{}^2 
	\\
\le& \, C (T+1) + C \tau \sum_{m=0}^{\ell}\norm{\nabla \bbuh^{m+\hf}}{}^2 \cdot \sum_{j=1}^m \left( \frac13 \right)^{m-j} \left( \norm{\muh^{j-\hf}}{}^2 + \norm{\muh^{j-\frac32}}{}^2 \right) ,
	\end{align*}
for any $2 \le \ell \le M-1$, where we have used Part 1, \eqref{eq:sum-3D-good-mu} and \eqref{eq:sum-mu-discrete}. Moreover, with an application of the discrete Gronwall inequality from Lemma \ref{lem-discrete-gronwall-2} (with $\alpha = \frac13 < 1$), we arrive at 
	\begin{align} 
   \hf \norm{\muh^{\ell+\hf}}{}^2 + \frac{\tau}{16} \sum_{m=2}^{\ell} \norm{\dtau \phih^{m+\hf}}{}^2  \le & \ C (T+1) \cdot \exp \left( C A_\alpha \tau \sum_{m=0}^{\ell}\norm{\nabla \bbuh^{m+\hf}}{}^2 \right) 
	\nonumber
	\\
\le & \ C ( T+1) ,  
	\end{align} 
where \eqref{eq:sum-mu-discrete} has been repeatedly applied.

Now, using \eqref{eq:discrete-laplacian-phi-mu-equiv}, \eqref{eq:infinity-bound}, the embedding $H^1(\Omega) \hookrightarrow L^6(\Omega)$, and \eqref{eq:Linf-H1-phi-discrete}, we get 
	\begin{align*}
\norm{\Delta_h \phich^{\ell+\hf}}{}^2 + \norm{\phich^{\ell+\hf}}{L^\infty}^{\frac{4(6-d)}{d}} \le C (T+1) , \, \, \forall \, 2 \le \ell \le M-1 . 
	\end{align*}
By Lemma \ref{lem:induction}, the following bound is available: 
	\begin{align*}
\norm{\Delta_h \phih^{\ell+1}}{}^2 \le C (T+1) , \, \, \forall \, 2 \le \ell \le M-1 . 
	\end{align*}
Using \eqref{eq:infinity-bound} again, the embedding $H^1(\Omega) \hookrightarrow L^6(\Omega)$, and \eqref{eq:Linf-H1-phi-discrete}, we arrive at 
	\begin{align*}
 \norm{\phih^{\ell+1}}{L^\infty}^{\frac{4(6-d)}{d}} \le C (T+1) , \, \, \forall \, 2 \le \ell \le M-1  . 
	\end{align*}
The proof is completed by combining Parts 1 and 2.
	\end{proof}

	\section{Error Estimates for the Fully Discrete Scheme}
	\label{sec:error estimates}
	
	\begin{asmp}
	\label{asmp:initial-stability-3}
For the error estimates that we pursue in this section, we shall assume that weak solutions have the additional regularities
	\begin{align}
&\phi \in L^{\infty}(0,T;W^{1,6}(\Omega)) \cap H^1(0,T;H^{q+1}(\Omega)) \cap H^2(0,T;H^{3}(\Omega)) \cap H^3(0,T;L^2(\Omega)),
	\label{eq:higher-regularities-a}
	\\
&\phi^2 \in H^2(0,T;H^1(\Omega)),
	\label{eq:higher-regularities-b}
	\\
&\mu \in L^2(0,T;H^{q+1}(\Omega)),
	\label{eq:higher-regularities-c}
	\\
&\bu \in H^2(0,T;{\bf L}^2(\Omega)) \cap L^\infty(0,T;{\bf L}^4(\Omega)) \cap L^\infty(0,T;{\bf H}^{q+1}(\Omega)) \cap H^1(0,T;{\bf H}^{q+1}(\Omega)),
	\label{eq:higher-regularities-d}
	\\
& p \in L^2(0,T;H^q(\Omega) \cap L^2_0(\Omega)) \cap L^\infty(0,T;H^{q}(\Omega)) ,
	\label{eq:higher-regularities-e}
	\end{align}
where $q\ge 1$ corresponds to the finite element spaces defined at the beginning of Section \ref{sec:defn-and-properties}. The norm bounds associated with the assumed regularities above are not necessarily global-in-time and therefore can involve constants that depend upon the final time $T$.  We also assume that the initial data are sufficiently regular so that the stability from Assumptions \ref{asmp:initial-stability-1}, \ref{asmp:initial-stability-2}, and \ref{asmp:initial-stability-2-b} hold.
	\end{asmp}

Weak solutions $(\phi, \mu, \bu, p)$ to \eqref{eq:weak-chns-a} -- \eqref{eq:weak-chns-d} with the higher regularities \eqref{eq:higher-regularities-a} -- \eqref{eq:higher-regularities-e} solve the following variational problem: for all $t\in [0,T]$, 
		\begin{align}
\iprd{ \partial_t \phi}{\nu} + \varepsilon \,\aiprd{\mu}{\nu} + \bform{\phi}{\bu}{\nu} &= 0, \quad \forall \,\nu \in H^1(\Omega),
	\\
\iprd{\mu}{\psi}-\varepsilon \,\aiprd{\phi}{\psi} - \varepsilon^{-1}\iprd{\phi^3-\phi}{\psi}  &= 0, \quad \forall \,\psi\in H^1(\Omega),
	\\
\langle \partial_t \bu, \bv \rangle + \eta \, \aiprd{\bu}{\bv} + \Bform{\bu}{\bu}{\bv} - \cform{\bv}{p} - \gamma \, \bform{\phi}{\bv}{\mu} &=0, \quad \forall \, \bv \in {\bf H}^1_0(\Omega),
	\\
\cform{\bu}{q} &=0, \quad \forall \, q \in L^2_0(\Omega).
	\end{align}

We define the following: for any \emph{real} number $m\in [0,M]$, $t_{m} := m \cdot \tau$, and $\psi^{m} := \psi(t_{m})$. This definition applies to vector valued functions of time as well. Note that, in general,
	\[
\psi^{m+\hf}:= \psi(t_{m+\hf}) \ne \frac{1}{2}\left(\psi^{m}+\psi^{m+1}\right)=:\bar{\psi}^{m+\hf}.
	\]
An over-bar will always indicate a simple central average in time.
Denote
	\begin{equation}
	 \eA^{\phi,m} := \phi^{m} - R_h \phi^{m}, \quad \eA^{\mu,m} := \mu^{m} - R_h \mu^{m}, \quad  \eA^{\bu,m} := \bu^{m} - P_h \bu^{m}, \quad \eA^{p,m} := p^{m} - P_h p^{m}.
	\end{equation}
The following definitions are given for any integer $0\le m\le M-1$: 
	\begin{align*}
\dtau \phi^{m+\hf}  := & \ \frac{\phi^{m+1} - \phi^{m}}{\tau}, & \quad \dtau \bu^{m+\hf} := & \ \frac{\bu^{m+1} - \bu^{m}}{\tau}, 
	\\
\sigma^{\phi, m+\hf}_1 := & \ \dtau R_h \phi^{m+\hf} - \dtau \phi^{m+\hf}, & \quad \sigma^{\bu, m+\hf}_1 := & \ \dtau {\bf P}_h \bu^{m+\hf} - \dtau \bu^{m+\hf},
	\\
\sigma^{\phi, m+\hf}_2 := & \ \dtau \phi^{m+\hf} - \partial_t \phi^{m+\hf},  &\quad \sigma^{\bu, m+\hf}_2 := & \ \dtau \bu^{m+\hf} - \partial_t \bu^{m+\hf}, 
	\\
\sigma^{\phi, m+\hf}_3 := & \ \bar{\phi}^{m+\hf}  -  \phi^{m+\hf}, & \quad \sigma^{\bu, m+\hf}_3 := & \ \bar{\bu}^{m+\hf} -  \bu^{m+\hf},
	\\
\sigma_4^{\phi,m+\hf} := & \  \chi\left(\phi^{m+1},\phi^m \right) - \left(\phi^{m+\hf}\right)^3, &  \quad \sigma^{p, m+\hf}_3 := & \ \bar{p}^{m+\hf} -  p^{m+\hf}.
	\end{align*}	   
Then the PDE solution, evaluated at the half-integer time steps $t_{m+\hf}$, satisfies
	\begin{subequations}
	\begin{align}
\iprd{\dtau R_h \phi^{m+\hf}}{\nu} + \varepsilon \, \aiprd{R_h \mu^{m+\hf}}{\nu} = & \  \iprd{\sigma^{\phi, m+\hf}_1 + \sigma^{\phi, m+\hf}_2}{\nu} - \bform{\phi^{m+\hf}}{\bu^{m+\hf}}{\nu},
	\label{eq:weak-error-a}
	\\
\varepsilon \, \aiprd{ R_h \bar{\phi}^{m+\hf}}{\psi} - \iprd{R_h \mu^{m+\hf}}{\psi} = & \  \varepsilon \, \aiprd{\sigma^{\phi,m+\hf}_3}{\psi} + \iprd{\eA^{\mu,m+\hf}}{\psi} 
	\nonumber
	\\
&- \frac{1}{\varepsilon} \iprd{\chi\left(\phi^{m+1},\phi^m\right)}{\psi} + \frac{1}{\varepsilon}\iprd{\sigma_4^{\phi,m+\hf}}{\psi}
	\nonumber
	\\
& + \frac{1}{\varepsilon} \iprd{\phi^{m+\hf}}{\psi} ,
	\label{eq:weak-error-b}
	\\
\iprd{\dtau {\bf P}_h \bu^{m+\hf}}{\bv} + \eta \, \aiprd{{\bf P}_h\bar{\bu}^{m+\hf} }{\bv} & 
	\nonumber
	\\
- \cform{\bv}{P_h \bar{p}^{m+\hf}}  = & \ \eta \, \aiprd{\sigma^{\bu,m+\hf}_3}{\bv} + \iprd{\sigma^{\bu, m+\hf}_1 + \sigma^{\bu,m+\hf}_2}{\bv}  - \cform{\bv}{\sigma^{p,m+\hf}_3} 
	\nonumber
	\\
& + \gamma \, \bform{\phi^{m+\hf}}{\bv}{\mu^{m+\hf}} - \Bform{\bu^{m+\hf}}{\bu^{m+\hf}}{\bv} ,
	\label{eq:weak-error-c}
	\\
\cform{ {\bf P}_h \bar{\bu}^{m+\hf}}{q} = & \ \cform{\sigma^{\bu,m+\hf}_3}{q},
	\label{eq:weak-error-d}
	\end{align}
	\end{subequations}
for all $\nu, \psi \in S_h$, $\bv \in \Xh$, and $q \in \Soh$, for any $0\le m\le M-1$.

Restating the fully discrete splitting scheme \eqref{eq:scheme-a} -- \eqref{eq:scheme-d}, we have, for $1 \le m \le M-1$, and for all $\nu, \psi \in S_h$, $\bv \in \Xh$, and $q \in \Soh$, 
	\begin{subequations}
	\begin{align}
\iprd{\dtau \phih^{m+\hf}}{\nu} + \varepsilon \, \aiprd{\muh^{m+\hf}}{\nu} = & \ - \bform{\tilde{\phi}_h^{m+\hf}}{\bbuh^{m+\hf}}{\nu} ,
	\label{eq:scheme-error-a}
	\\
\varepsilon \, \aiprd{\bphih^{m+\hf}}{\psi} + \frac{\varepsilon}{4} \, \aiprd{\tau^2 \ddtau \phih^m}{\psi} - \iprd{\muh^{m+\hf}}{\psi} = & 
  - \frac{1}{\varepsilon} \iprd{\chi\left(\phih^{m+1}, \phih^m\right)}{\psi} + \frac{1}{\varepsilon} \iprd{\tilde{\phi}_h^{m+\hf}}{\psi}  ,
	\label{eq:scheme-error-b}
	\\
\iprd{\dtau \buh^{m+\hf}}{\bv} + \eta \, \aiprd{\bbuh^{m+\hf}}{\bv}  - \cform{\bv}{\bar{p}_h^{m+\hf}}  = & \  \gamma \, \bform{\tilde{\phi}_h^{m+\hf}}{\bv}{\muh^{m+\hf}} - \Bform{\tilde{\bu}_h^{m+\hf}}{\bbuh^{m+\hf}}{\bv} ,
	\label{eq:scheme-error-c}
	\\
\cform{\bbuh^{m+\hf}}{q} = & \ 0 ,
	\label{eq:scheme-error-d}
	\end{align}
	\end{subequations}
where $\ddtau\psi^m  := \frac{1}{\tau^2}\left( \psi^{m+1}-2\psi^m+\psi^{m-1}\right)$.
	
Now let us define the following additional error terms
	\begin{align}
	\nonumber
\eh^{\phi,m} := R_h \phi^{m} - \phih^{m}, \quad \e^{\phi,m} := \phi^{m} - \phih^{m}, \quad \eh^{\mu,m} := R_h \mu^{m} - \muh^{m},
	\\
\eh^{\bu,m} := P_h \bu^{m} - \buh^{m}, \quad \e^{\bu,m} := \bu^{m} - \buh^{m}, \quad \eh^{p,m} := P_h p^{m} - p_h^{m}, \quad \e^{p,m} := p^{m} - p_h^{m}. 
	\end{align}
We also define, for $1\le m\le M-1$,
	\begin{align}
\sigma_5^{\phi,m+\hf} &:= \, \chi\left(\phih^{m+1},\phih^m \right) - \chi\left(\phi^{m+1},\phi^m \right)  ,
	\\
\sigma_6^{\phi,m+\hf}&:= \, \phi^{m+\hf} - \tilde{\phi}_h^{m+\hf} ,
	\\
\sigma_6^{\bu,m+\hf}&:= \, \bu^{m+\hf} - \tilde{\bu}_h^{m+\hf} 	.
	\end{align}

Subtracting \eqref{eq:scheme-error-a} -- \eqref{eq:scheme-error-d} from \eqref{eq:weak-error-a} -- \eqref{eq:weak-error-d}, yields, for $1\le m\le M-1$,
	\begin{subequations}
	\begin{align}
\iprd{\dtau \eh^{\phi,m+\hf}}{\nu} + \varepsilon \, \aiprd{\eh^{\mu,m+\hf}}{\nu}  = & \ \iprd{\sigma^{\phi,m+\hf}_1 + \sigma^{\phi,m+\hf}_2}{\nu} - \bform{\phi^{m+\hf}}{\bu^{m+\hf}}{\nu} 
	\nonumber
	\\
&+ \bform{\tilde{\phi}_h^{m+\hf}}{\bbuh^{m+\hf}}{\nu},
	\label{eq:error-a}
	\\
 \varepsilon  \, \aiprd{\bar{\e}_h^{\phi,m+\hf} }{\psi} + \frac{\varepsilon\tau^2}{4} \aiprd{\ddtau \eh^{\phi,m}}{\psi} \hspace{0.2in} &
 	\nonumber
	\\
- \iprd{\eh^{\mu,m+\hf}}{\psi} = & \ \varepsilon \, \aiprd{\sigma_3^{\phi,m+\hf}}{\psi} + \iprd{\eA^{\mu,m+\hf}}{\psi} 
	\nonumber
	\\
&+ \frac{1}{\varepsilon} \iprd{\sigma_5^{\phi,m+\hf}}{\psi} + \frac{1}{\varepsilon}\iprd{\sigma_4^{\phi,m+\hf}}{\psi} 
	\nonumber
	\\
&+ \frac{1}{\varepsilon} \iprd{ \sigma_6^{\phi,m+\hf}}{\psi} + \frac{\varepsilon\tau^2}{4} \, \aiprd{\ddtau \phi^m}{\psi} ,
	\label{eq:error-b}
	\\
\iprd{\dtau \eh^{\bu,m+\hf}}{\bv} + \eta \, \aiprd{\bar{\e}_h^{\bu,m+\hf}}{\bv} \hspace{0.2in}  &
	\nonumber
	\\
- \cform{\bv}{\bar{\e}_h^{p,m+\hf}} = & \ \iprd{\sigma^{\bu,m+\hf}_1 + \sigma^{\bu,m+\hf}_2}{\bv} + \eta \, \aiprd{\sigma_3^{\bu,m+\hf}}{\bv}  
	\nonumber
	\\
&  - \cform{\bv}{\sigma_3^{p,m+\hf}} +\gamma \, \bform{\phi^{m+\hf}}{\bv}{\mu^{m+\hf}}  
	\nonumber
	\\
& - \Bform{\bu^{m+\hf}}{\bu^{m+\hf}}{\bv} - \gamma \, \bform{\tilde{\phi}_h^{m+\hf}}{\bv}{\muh^{m+\hf}} 
	\nonumber
	\\
&+ \Bform{\tilde{\bu}_h^{m+\hf}}{\bbuh^{m+\hf}}{\bv} ,
	\label{eq:error-c}
	\\
 \cform{\bar{\e}_h^{\bu,m+\hf} }{q} = & \ \cform{\sigma_3^{\bu,m+\hf}}{q} (= 0),
	\label{eq:error-d}
	\end{align}
	\end{subequations}

Setting $\nu = \eh^{\mu, m+\hf}$ in \eqref{eq:error-a}, $\psi = \dtau \eh^{\phi, m+\hf}$ in \eqref{eq:error-b}, $\bv = \frac{1}{\gamma} \bar{\e}_h^{\bu,m+\hf}$ in \eqref{eq:error-c}, $q = \frac{1}{\gamma}\bar{\e}_h^{p,m+\hf}$ in \eqref{eq:error-d} and adding the resulting equations, we have
	\begin{align}
& \hspace{-0.30in}\frac{\varepsilon}{2\tau} \left( \norm{\nabla\eh^{\phi,m+1}}{}^2 - \norm{\nabla \eh^{\phi,m}}{}^2\right) + \frac{1}{2 \gamma \tau} \left(\norm{ \eh^{\bu,m+1}}{}^2 - \norm{\eh^{\bu,m}}{}^2\right)  
	\nonumber
	\\
& \hspace{-0.25in} + \varepsilon \norm{\nabla \eh^{\mu,m+\hf}}{}^2 + \frac{ \varepsilon\tau^2}{4} \, \aiprd{\ddtau\eh^{\phi,m}}{\dtau \eh^{\phi, m+\hf}} + \frac{\eta}{\gamma} \, \norm{ \nabla \bar{\e}_h^{\bu,m+\hf}}{}^2  
	\nonumber
	\\
& = \iprd{\sigma^{\phi,m+\hf}_1 + \sigma^{\phi,m+\hf}_2}{\eh^{\mu,m+\hf}} + \varepsilon \, \aiprd{\sigma_3^{\phi,m+\hf}}{\dtau \eh^{\phi, m+\hf}} + \iprd{\eA^{\mu,m+\hf}}{\dtau \eh^{\phi,m+\hf}}
	\nonumber
	\\
&\quad + \frac{1}{\varepsilon} \iprd{\sigma_4^{\phi,m+\hf} + \sigma_5^{\phi,m+\hf} + \sigma_6^{\phi,m+\hf} }{\dtau \eh^{\phi, m+\hf}} + \frac{\varepsilon\tau^2}{4} \, \aiprd{\ddtau\phi^m }{\dtau \eh^{\phi, m+\hf}} 
	\nonumber
	\\
&\quad  + \frac{1}{\gamma} \iprd{\sigma^{\bu,m+\hf}_1 + \sigma^{\bu,m+\hf}_2}{\bar{\e}_h^{\bu,m+\hf}} + \frac{\eta}{\gamma} \, \aiprd{\sigma_3^{\bu,m+\hf}}{\bar{\e}_h^{\bu,m+\hf} } - \frac{1}{\gamma} \cform{\bar{\e}_h^{\bu,m+\hf} }{ \sigma_3^{p,m+\hf}} 
	\nonumber
	\\
&\quad  - \bform{\phi^{m+\hf}}{\bu^{m+\hf}}{\eh^{\mu,m+\hf}} + \bform{\tilde{\phi}_h^{m+\hf}}{\bbuh^{m+\hf}}{\eh^{\mu,m+\hf}} 
	\nonumber
	\\
&\quad + \bform{\phi^{m+\hf}}{\bar{\e}_h^{\bu,m+\hf}}{\mu^{m+\hf}} - \bform{\tilde{\phi}_h^{m+\hf}}{\bar{\e}_h^{\bu,m+\hf}}{\muh^{m+\hf}} 
	\nonumber
	\\
&\quad- \frac{1}{\gamma} \Bform{\bu^{m+\hf}}{\bu^{m+\hf}}{\bar{\e}_h^{\bu,m+\hf}} + \frac{1}{\gamma} \Bform{\tilde{\bu}_h^{m+\hf}}{\bbuh^{m+\hf}}{\bar{\e}_h^{\bu,m+\hf}} ,
	\label{eq:error-eq}
	\end{align}
for all $1\le m\le M-1$. Expression~\eqref{eq:error-eq} is the key error equation from which we will define our error estimates. Observe that the error equation is not defined for $m=0$.

The following estimates are standard and the proofs are omitted.
	\begin{lem}
	\label{lem:phi-truncation-errors}
Suppose that $(\phi, \mu, \bu, p)$ is a weak solution to \eqref{eq:weak-error-a} -- \eqref{eq:weak-error-d}, with the additional regularities in Assumption~\ref{asmp:initial-stability-3}.  Then for all $t_m\in[0,T]$ and for any $h$, $\tau >0$, there exists a constant $C>0$, independent of $h$ and $\tau$ and $T$, such that for all $0\le m\le M-1$,
	\begin{align}
\norm{\sigma^{\phi,m+\hf}_1}{}^2 &\le  C\frac{h^{2q+2}}{\tau} \int_{t_m}^{t_{m+1}} \norm{\partial_s\phi(s)}{H^{q+1}}^2  ds , 
	\label{eq:phi-truncation-1} 
	\\
\norm{\sigma^{\phi,m+\hf}_2}{}^2 &\le \frac{\tau^3}{640} \int_{t_m}^{t_{m+1}}\norm{\partial_{sss}\phi(s)}{}^2 ds,   
	\label{eq:phi-truncation-2}
	\\
\norm{\nabla \Delta \sigma^{\phi,m+\hf}_3}{}^2 &\le \frac{\tau^3}{96} \int_{t_m}^{t_{m+1}} \norm{\nabla \Delta \partial_{ss} \phi(s)}{}^2 ds,   
	\label{eq:phi-truncation-3}
	\\
\norm{\nabla \sigma^{\phi,m+\hf}_3}{}^2 &\le \frac{\tau^3}{96} \int_{t_m}^{t_{m+1}} \norm{\nabla \partial_{ss} \phi(s)}{}^2 ds,   
	\label{eq:phi-truncation-3b}
	\\
\norm{ \hf\left(\phi^{m+1}\right)^2+\hf\left(\phi^m\right)^2 - \left(\phi^{m+\hf}\right)^2}{H^1}^2 &\le \frac{\tau^3}{96} \int_{t_m}^{t_{m+1}} \norm{\partial_{ss} \phi^2(s)}{H^1}^2 ds.
	\label{eq:phi-truncation-4}
	\end{align}
In addition, for all $1\le m\le M-1$,
	\begin{align}
\norm{\tau^2\nabla \Delta \ddtau\phi^m}{}^2 &\le \frac{\tau^3}{3} \int_{t_{m-1}}^{t_{m+1}} \norm{\nabla \Delta \partial_{ss} \phi(s)}{}^2 ds,  
	\label{eq:phi-truncation-5}
	\\
\norm{\tau^2\nabla \ddtau\phi^m}{}^2 &\le \frac{\tau^3}{3} \int_{t_{m-1}}^{t_{m+1}} \norm{\nabla \partial_{ss} \phi(s)}{}^2 ds,   
	\label{eq:phi-truncation-6}
	\\
\norm{\nabla\sigma_6^{\phi,m+\hf}}{}^2 &\le  \frac{\tau^3}{12}  \int_{t_{m-1}}^{t_{m+1}}\norm{\nabla\partial_{ss}\phi(s)}{}^2 \, ds.
	\label{eq:phi-truncation-7}
	\end{align}
	\end{lem}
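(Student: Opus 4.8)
The estimates collected here are all consistency (truncation) bounds, and I would prove them by the same two-tool recipe: Taylor expansion with integral (Peano-kernel) remainder, followed by the Cauchy--Schwarz inequality in time; for \eqref{eq:phi-truncation-1} one additionally invokes the approximation property of the Ritz projection. The spatial differential operators $\nabla$ and $\nabla\Delta$, and the use of the $H^1$-norm, never cause difficulty, since $R_h$, $\dtau$, $\ddtau$, and the central averaging all commute with spatial differentiation, so these operators simply pass inside the time-integral remainders. I would organize the work by the \emph{type} of term rather than term by term.

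\textbf{The projection--difference term \eqref{eq:phi-truncation-1}.} Because $R_h$ is a fixed, time-independent linear projection, it commutes with $\partial_t$ and with $\dtau$, so that
\[
\sigma_1^{\phi,m+\hf} = \dtau R_h\phi^{m+\hf} - \dtau\phi^{m+\hf} = \frac{1}{\tau}\int_{t_m}^{t_{m+1}} (R_h - I)\,\partial_s\phi(s)\,ds.
\]
Taking the $L^2(\Omega)$ norm, applying Cauchy--Schwarz in time (which contributes a factor $\tau^{1/2}$), and then using the Ritz estimate $\norm{(I-R_h)v}{}\le C h^{q+1}\norm{v}{H^{q+1}}$ inside the integral yields \eqref{eq:phi-truncation-1}. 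The regularity $\partial_t\phi\in L^2(0,T;H^{q+1})$ from \eqref{eq:higher-regularities-a} is exactly what makes the right-hand side finite.

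\textbf{The purely temporal midpoint/average terms \eqref{eq:phi-truncation-2}--\eqref{eq:phi-truncation-4}.} For $\sigma_2^{\phi,m+\hf} = \dtau\phi^{m+\hf} - \partial_t\phi^{m+\hf}$ I would expand $\phi^{m+1}$ and $\phi^m$ to second order about the \emph{midpoint} $t_{m+\hf}$; the symmetry of the two nodes about $t_{m+\hf}$ forces the zeroth-, first-, and second-order Taylor contributions to cancel in the combination, leaving an integral of $\partial_{sss}\phi$ against an explicit quadratic kernel, and Cauchy--Schwarz then gives \eqref{eq:phi-truncation-2}. The average-minus-midpoint term $\sigma_3^{\phi,m+\hf} = \bar\phi^{m+\hf} - \phi^{m+\hf}$ is handled the same way with a first-order expansion about $t_{m+\hf}$: the first-order terms cancel by symmetry and $\partial_{ss}\phi$ survives against a linear kernel, which produces both \eqref{eq:phi-truncation-3} and \eqref{eq:phi-truncation-3b} once $\nabla$ and $\nabla\Delta$ are moved inside the remainder. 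Finally, \eqref{eq:phi-truncation-4} is literally the average-minus-midpoint error $\overline{\phi^2}^{m+\hf} - (\phi^2)^{m+\hf}$ applied to $\phi^2$ in place of $\phi$ and measured in $H^1$, so it follows verbatim from the argument for $\sigma_3$, using the regularity $\phi^2\in H^2(0,T;H^1)$ of \eqref{eq:higher-regularities-b}.

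\textbf{Second differences and the extrapolation error \eqref{eq:phi-truncation-5}--\eqref{eq:phi-truncation-7}.} Here I would use $\tau^2\ddtau\phi^m = \phi^{m+1} - 2\phi^m + \phi^{m-1}$ and expand about $t_m$; the first-order terms cancel and the second difference equals $\int_{t_{m-1}}^{t_{m+1}} K(s)\,\partial_{ss}\phi(s)\,ds$ against a tent-shaped kernel $K$ supported on $[t_{m-1},t_{m+1}]$, so Cauchy--Schwarz delivers \eqref{eq:phi-truncation-5} and \eqref{eq:phi-truncation-6} with $\nabla\Delta$ and $\nabla$ passed through. For \eqref{eq:phi-truncation-7}, the relevant consistency error is that of the second-order Adams--Bashforth extrapolation, $\phi^{m+\hf} - \left(\tfrac32\phi^m - \tfrac12\phi^{m-1}\right)$; expanding $\phi^{m+\hf}$ and $\phi^{m-1}$ about $t_m$, the constant and first-order terms cancel in this combination, leaving $\partial_{ss}\phi$ integrated against an explicit piecewise-linear kernel on $[t_{m-1},t_{m+1}]$, and Cauchy--Schwarz gives the stated bound. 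The only genuine care required anywhere is the bookkeeping of \emph{which node to expand about} (the midpoint $t_{m+\hf}$ for \eqref{eq:phi-truncation-2}--\eqref{eq:phi-truncation-4}, the point $t_m$ for \eqref{eq:phi-truncation-5}--\eqref{eq:phi-truncation-7}) so that the symmetric cancellation raises the order and exposes the correct derivative; the displayed numerical constants are then just the squared $L^2$-norms of the corresponding Peano kernels, and so I would regard the whole lemma as routine.
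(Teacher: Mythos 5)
The paper itself offers no proof of this lemma---it declares the estimates ``standard'' and omits them---and your proposal supplies precisely the standard argument the authors have in mind: integral-remainder Taylor expansion about the correct node ($t_{m+\hf}$ for \eqref{eq:phi-truncation-2}--\eqref{eq:phi-truncation-4}, $t_m$ for \eqref{eq:phi-truncation-5}--\eqref{eq:phi-truncation-7}) followed by Cauchy--Schwarz in time, the $L^2$ Ritz-projection error bound (valid on the convex domain assumed in Assumption~\ref{asmp:initial-stability-2}) together with commutation of $R_h$ and $\dtau$ for \eqref{eq:phi-truncation-1}, commutation of $\nabla$, $\nabla\Delta$, and the $H^1$-norm with the temporal remainders throughout, and---importantly---the correct reading of $\sigma_6^{\phi,m+\hf}$ in \eqref{eq:phi-truncation-7} as the exact-solution Adams--Bashforth error $\phi^{m+\hf}-\tfrac32\phi^m+\tfrac12\phi^{m-1}$, which is exactly how Lemma~\ref{lem:sigma6-estimate} invokes it. The one caveat is your closing claim that the displayed constants are ``just the squared $L^2$-norms of the corresponding Peano kernels'': that computation does not literally reproduce every stated factor (e.g.\ for \eqref{eq:phi-truncation-2} the kernel $K_2(s)=\frac{(s-t_m)^2}{2\tau}$ on $[t_m,t_{m+\hf}]$, $\frac{(t_{m+1}-s)^2}{2\tau}$ on $[t_{m+\hf},t_{m+1}]$ has $\int K_2^2\,ds=\tau^3/320$, not $\tau^3/640$), but since every downstream use of the lemma absorbs these fixed numerical factors into a generic constant $C$, this discrepancy is harmless and your proof is correct in substance.
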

	
	\begin{lem}
	\label{lem:u-truncation-errors}
Suppose that $(\phi, \mu, \bu, p)$ is a weak solution to \eqref{eq:weak-error-a} -- \eqref{eq:weak-error-d}, with the additional regularities in Assumption~\ref{asmp:initial-stability-3}.  Then for all $t_m\in[0,T]$ and for any $h$, $\tau >0$, there exists a constant $C>0$, independent of $h$ and $\tau$ and $T$, such that for all $0\le m\le M-1$,
	\begin{align}
\norm{\sigma^{\bu,m+\hf}_1}{}^2 &\le  C\frac{h^{2q+2}}{\tau} \int_{t_m}^{t_{m+1}} \norm{\partial_s\bu(s)}{H^{q+1}}^2  ds , 
	\label{eq:u-truncation-1} 
	\\
\norm{\sigma^{\bu,m+\hf}_2}{}^2 &\le \frac{\tau^3}{640} \int_{t_m}^{t_{m+1}}\norm{\partial_{sss}\bu(s)}{}^2 ds,   
	\label{eq:u-truncation-2}
	\\
\norm{\nabla \Delta \sigma^{\bu,m+\hf}_3}{}^2 &\le \frac{\tau^3}{96} \int_{t_m}^{t_{m+1}} \norm{\nabla \Delta \partial_{ss} \bu(s)}{}^2 ds,   
	\label{eq:u-truncation-3}
	\\
\norm{\nabla \sigma^{\bu,m+\hf}_3}{}^2 &\le \frac{\tau^3}{96} \int_{t_m}^{t_{m+1}} \norm{\nabla \partial_{ss} \bu(s)}{}^2 ds,
	\label{eq:u-truncation-3b}
	\\
\norm{\sigma^{p,m+\hf}_3}{}^2 &\le \frac{\tau^3}{96} \int_{t_m}^{t_{m+1}} \norm{\partial_{ss} p(s)}{}^2 ds.  
	\label{eq:p-truncation-3}
	\end{align}
In addition, for all $1\le m\le M-1$,
	\begin{align}
\norm{\tau^2\nabla \Delta \ddtau\bu^m}{}^2 &\le \frac{\tau^3}{3} \int_{t_{m-1}}^{t_{m+1}} \norm{\nabla \Delta \partial_{ss} \bu(s)}{}^2 ds,  
	\label{eq:u-truncation-5}
	\\
\norm{\tau^2\nabla \ddtau\bu^m}{}^2 &\le \frac{\tau^3}{3} \int_{t_{m-1}}^{t_{m+1}} \norm{\nabla \partial_{ss} \bu(s)}{}^2 ds,   
	\label{eq:u-truncation-6}
	\\
\norm{\nabla\sigma_6^{\bu,m+\hf}}{}^2 &\le  \frac{\tau^3}{12}  \int_{t_{m-1}}^{t_{m+1}}\norm{\nabla\partial_{ss}\bu(s)}{}^2 \, ds,   
	\label{eq:u-truncation-7}
	\\
\norm{\tau^2\nabla \ddtau p^m}{}^2 &\le \frac{\tau^3}{3} \int_{t_{m-1}}^{t_{m+1}} \norm{\nabla \partial_{ss} p(s)}{}^2 ds .
	\label{eq:p-truncation}
	\end{align}
	\end{lem}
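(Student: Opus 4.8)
The plan is to establish each bound exactly as in the companion result for the phase variable, Lemma~\ref{lem:phi-truncation-errors}: every quantity here is a fixed linear combination of the exact solution (or its Stokes projection) sampled at the nodes $t_{m-1},t_m,t_{m+1}$ and at the midpoint $t_{m+\hf}$, so each is the Peano remainder of an exact interpolation or quadrature identity. Accordingly I would split the seven estimates into two groups: the spatial-projection term $\sigma_1^{\bu,m+\hf}$, which additionally invokes the Stokes estimate (\ref{stokes-estimate}), and the six purely temporal truncation terms $\sigma_2^{\bu}$, $\sigma_3^{\bu}$, $\sigma_3^p$, $\ddtau\bu^m$, $\ddtau p^m$, $\sigma_6^{\bu}$, each of which follows from a Taylor expansion with integral remainder and a single application of the Cauchy--Schwarz inequality in time.

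For (\ref{eq:u-truncation-1}) I would first use that ${\bf P}_h$ is linear and time-independent to commute it with the difference quotient, writing $\sigma_1^{\bu,m+\hf}=\tau^{-1}\int_{t_m}^{t_{m+1}}({\bf P}_h-I)\partial_s\bu(s)\,ds$. Cauchy--Schwarz in time then gives $\norm{\sigma_1^{\bu,m+\hf}}{}^2\le\tau^{-1}\int_{t_m}^{t_{m+1}}\norm{({\bf P}_h-I)\partial_s\bu(s)}{}^2\,ds$, and the Stokes approximation estimate (\ref{stokes-estimate}) (the sub-optimal case, exponent $q$) converts each integrand into $Ch^{2q+2}\norm{\partial_s\bu(s)}{H^{q+1}}^2$, the accompanying pressure contribution being absorbed by the assumed regularity. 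The velocity regularity $\bu\in H^1(0,T;{\bf H}^{q+1})$ from Assumption~\ref{asmp:initial-stability-3} is precisely what renders the right-hand side finite.

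For the temporal terms I would Taylor-expand about the relevant base point and observe that the even-order contributions cancel by construction. For instance, $\sigma_3^{\bu,m+\hf}=\hf\bu^{m+1}+\hf\bu^m-\bu^{m+\hf}=\hf\int_{t_{m+\hf}}^{t_{m+1}}(t_{m+1}-s)\partial_{ss}\bu(s)\,ds+\hf\int_{t_m}^{t_{m+\hf}}(s-t_m)\partial_{ss}\bu(s)\,ds$; applying $\nabla$ (which commutes with the time integral) and Cauchy--Schwarz produces the factor $\frac14\int_{t_m}^{t_{m+\hf}}(t_{m+1}-s)^2\,ds=\tau^3/96$, giving (\ref{eq:u-truncation-3b}), and the same kernel with $\nabla\Delta$ in place of $\nabla$ gives (\ref{eq:u-truncation-3}). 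The remaining bounds are identical in spirit: (\ref{eq:u-truncation-2}) comes from the third-order remainder of the centered difference $\dtau\bu^{m+\hf}-\partial_t\bu^{m+\hf}$; (\ref{eq:u-truncation-5})--(\ref{eq:u-truncation-6}) and (\ref{eq:p-truncation}) use the tent-kernel representation $\tau^2\ddtau\bu^m=\int_{t_{m-1}}^{t_{m+1}}K(s)\partial_{ss}\bu(s)\,ds$; (\ref{eq:u-truncation-7}) is the second-order error of the Adams--Bashforth extrapolation $\frac32\bu^m-\hf\bu^{m-1}$ of $\bu^{m+\hf}$; and (\ref{eq:p-truncation-3}) is the midpoint-rule error for the pressure, identical to the $\sigma_3^{\bu}$ computation. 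In every case the explicit rational constant is simply the squared $L^2$-norm of the corresponding Peano kernel.

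I expect no serious obstacle: these are the verbatim vector and pressure analogues of the scalar estimates already asserted (without proof) in Lemma~\ref{lem:phi-truncation-errors}, and the bookkeeping is mechanical. The only point requiring a little care is $\sigma_1^{\bu}$, where one must justify commuting the Stokes projection with $\partial_t$ and must verify that the pressure contribution appearing in (\ref{stokes-estimate}) is controlled by the regularity assumed in Assumption~\ref{asmp:initial-stability-3}; matching each remainder integrand to the precise temporal norm listed in (\ref{eq:higher-regularities-d})--(\ref{eq:higher-regularities-e}) is the main thing to track.
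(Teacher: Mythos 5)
Your overall strategy is the right one, and in fact it is the only proof the paper has: the lemma is covered by the sentence ``The following estimates are standard and the proofs are omitted,'' so the Peano-kernel/Taylor-remainder argument with Cauchy--Schwarz in time, plus the Stokes estimate \eqref{stokes-estimate} for $\sigma_1^{\bu}$, is exactly what the authors intend. Your reading of \eqref{eq:u-truncation-7} as the extrapolation error of the \emph{exact} solution, $\bu^{m+\hf}-\frac32\bu^m+\hf\bu^{m-1}$, is also the correct one: the paper's earlier definition of $\sigma_6^{\bu,m+\hf}$ in terms of $\tilde{\bu}_h^{m+\hf}$ is a notational slip, as one sees from Lemma~\ref{lem:sigma6-estimate}, which adds the $\norm{\nabla \e^{\bu,m}}{}^2$, $\norm{\nabla \e^{\bu,m-1}}{}^2$ terms on top of \eqref{eq:u-truncation-7}.

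There is, however, one genuine gap, and it sits precisely in the estimate you flagged as delicate, \eqref{eq:u-truncation-1}. The Stokes projection \eqref{eq:Stokes-projection} couples velocity and pressure, so applying \eqref{stokes-estimate} to the pair $(\partial_s\bu,\partial_s p)$ gives $\norm{({\bf P}_h-I)\partial_s\bu(s)}{}\le Ch^{q+1}\left(\left|\partial_s\bu(s)\right|_{H^{q+1}}+\left|\partial_s p(s)\right|_{H^q}\right)$, and for Taylor--Hood-type spaces the pressure contribution cannot be dropped. Your assertion that it is ``absorbed by the assumed regularity'' is not correct: Assumption~\ref{asmp:initial-stability-3} gives only $p\in L^2(0,T;H^q(\Omega)\cap L^2_0(\Omega))\cap L^\infty(0,T;H^q(\Omega))$, with no $H^1(0,T;H^q(\Omega))$ regularity, so $\int_{t_m}^{t_{m+1}}\norm{\partial_s p(s)}{H^q}^2\,ds$ is not controlled by anything assumed. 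What your argument actually proves is \eqref{eq:u-truncation-1} with the additional term $C\frac{h^{2q+2}}{\tau}\int_{t_m}^{t_{m+1}}\norm{\partial_s p(s)}{H^q}^2\,ds$ on the right; to obtain the lemma as stated one must either assume $p\in H^1(0,T;H^q(\Omega))$ or carry that (same-order, and hence downstream harmless) term through the analysis. This defect is arguably inherited from the paper's own statement, but a complete proof has to confront it rather than wave it off. A minor arithmetic point besides: your kernel constant for $\sigma_3^{\bu}$ is off --- $\frac14\int_{t_m}^{t_{m+\hf}}(t_{m+1}-s)^2\,ds=\frac{7\tau^3}{96}$, and the squared-kernel integral for the midpoint-average functional is $\frac14\left[\int_{t_m}^{t_{m+\hf}}(s-t_m)^2\,ds+\int_{t_{m+\hf}}^{t_{m+1}}(t_{m+1}-s)^2\,ds\right]=\frac{\tau^3}{48}$ --- but since only the $C\tau^3$ scaling is ever used in $\mathcal{R}^{m+\hf}$, this has no consequence.
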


The following estimates are proved in~\cite{diegel15b}.
	\begin{lem}
	\label{lem:phi-sigma4-estimate}
Suppose that $(\phi, \mu, \bu, p)$ is a weak solution to \eqref{eq:weak-error-a} -- \eqref{eq:weak-error-d}, with the additional regularities in Assumption~\ref{asmp:initial-stability-3}. Then, there exists a constant $C>0$ independent of $h$ and $\tau$ -- but possibly dependent upon $T$ through the regularity estimates -- such that, for any $h, \tau >0$,
	\begin{align}
\norm{\nabla \sigma_4^{\phi,m+\hf}}{}^2 \le& \, C\tau^3 \int_{t_m}^{t_{m+1}} \norm{\nabla \partial_{ss} \phi(s)}{}^2 ds +C \tau^3 \int_{t_m}^{t_{m+1}} \norm{\partial_{ss} \phi^2(s)}{H^1}^2 ds,
	\\
\norm{\nabla \sigma_5^{\phi,m+\hf}}{}^2 \le& \, C \norm{\nabla \e^{\phi,m+1}}{}^2 + C \norm{\nabla \e^{\phi,m}}{}^2,
	\end{align}
where $\e^{\phi,m} := \phi^{m} - \phih^{m}$.
	\end{lem}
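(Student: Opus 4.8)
The plan is to prove the two estimates separately, each resting on an \emph{exact} algebraic rewriting of the nonlinearity $\chi$, after which the claimed bounds follow from the truncation estimates already recorded in Lemma~\ref{lem:phi-truncation-errors} together with the uniform bounds of Lemma~\ref{lem:a-priori-stability-dtau-mu}.

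For $\sigma_4^{\phi,m+\hf}$, set $g:=\phi^2$, so that $\chi\left(\phi^{m+1},\phi^m\right)=\bar g^{m+\hf}\,\bar\phi^{m+\hf}$ with $\bar g^{m+\hf}=\hf\left((\phi^{m+1})^2+(\phi^m)^2\right)$, while $\left(\phi^{m+\hf}\right)^3=g^{m+\hf}\phi^{m+\hf}$. A single telescoping then gives the identity
\[
\sigma_4^{\phi,m+\hf} = \bar g^{m+\hf}\,\sigma_3^{\phi,m+\hf} + \phi^{m+\hf}\bigl(\bar g^{m+\hf}-g^{m+\hf}\bigr),
\]
in which $\sigma_3^{\phi,m+\hf}=\bar\phi^{m+\hf}-\phi^{m+\hf}$ is the midpoint error for $\phi$ and $\bar g^{m+\hf}-g^{m+\hf}=\hf\left((\phi^{m+1})^2+(\phi^m)^2\right)-\left(\phi^{m+\hf}\right)^2$ is the midpoint error for $\phi^2$. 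Applying $\nabla$, the product rule, and H\"older's inequality, and using the $L^\infty$ and $W^{1,6}$ regularity of $\phi$ from Assumption~\ref{asmp:initial-stability-3} to bound the smooth factors $\bar g^{m+\hf}$, $\phi^{m+\hf}$ and their gradients in $L^\infty$ or $L^6$, reduces $\norm{\nabla\sigma_4^{\phi,m+\hf}}{}^2$ to a combination of $\norm{\sigma_3^{\phi,m+\hf}}{H^1}^2$ and $\norm{\bar g^{m+\hf}-g^{m+\hf}}{H^1}^2$. These are controlled exactly by \eqref{eq:phi-truncation-3b} and \eqref{eq:phi-truncation-4}, producing precisely the $\nabla\partial_{ss}\phi$ and $\partial_{ss}\phi^2$ terms of the claim. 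The only subtlety is that $\norm{\sigma_3^{\phi,m+\hf}}{L^2}$ and $\norm{\sigma_3^{\phi,m+\hf}}{L^6}$ must be absorbed into $\norm{\nabla\sigma_3^{\phi,m+\hf}}{}$; this follows from Poincar\'e's inequality because $\partial_{ss}\phi$ has zero spatial mean, mass conservation forcing $\iprd{\partial_t\phi}{1}=\iprd{\partial_{tt}\phi}{1}=0$.

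For $\sigma_5^{\phi,m+\hf}=\chi\left(\phih^{m+1},\phih^m\right)-\chi\left(\phi^{m+1},\phi^m\right)$, I would expand $4\chi(a,b)=a^3+a^2b+ab^2+b^3$ and factor each cubic and mixed difference as (error)$\times$(quadratic in $\phi^{m+1},\phi^m,\phih^{m+1},\phih^m$), writing $\sigma_5^{\phi,m+\hf}$ as a finite sum $\sum_j P_j\,\e^{\phi,m+1}+\sum_j Q_j\,\e^{\phi,m}$ with each $P_j,Q_j$ quadratic. Taking $\nabla$ produces two families: terms where $\nabla$ lands on the error, bounded by $\norm{P_j}{L^\infty}\norm{\nabla\e^{\phi,m+1}}{}$ using that both $\phi^{m}$ (regularity) and $\phih^{m}$ (the $\ell^\infty(0,T;L^\infty)$ bound \eqref{eq:Linf-phi}) are uniformly bounded; and terms where $\nabla$ lands on a coefficient, bounded by $\norm{\e^{\phi,m+1}}{L^4}\norm{\nabla P_j}{L^4}$, where $\norm{\nabla P_j}{L^4}$ is controlled through $\norm{\nabla\phi}{L^4}$ (regularity) and $\norm{\nabla\phih^m}{L^4}$, the latter supplied by \eqref{eq:grad-v-L4-bound} together with the discrete-Laplacian bound \eqref{eq:Linf-phi}.

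The main obstacle is to obtain a bound on $\norm{\nabla\sigma_5^{\phi,m+\hf}}{}^2$ in terms of the \emph{gradient} norms $\norm{\nabla\e^{\phi,m+1}}{}^2$ and $\norm{\nabla\e^{\phi,m}}{}^2$ alone, rather than full $H^1$ norms of the errors. This is resolved by the same mean-zero observation: the phase errors $\e^{\phi,m}=\phi^m-\phih^m$ have zero spatial average, since both the continuous problem and the fully discrete scheme are mass conservative, so Poincar\'e's inequality yields $\norm{\e^{\phi,m}}{L^4}\le C\norm{\e^{\phi,m}}{H^1}\le C\norm{\nabla\e^{\phi,m}}{}$, converting each residual full-norm factor into a gradient norm. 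Assembling the two families then gives the stated Lipschitz bound. I expect the quadratic-coefficient bookkeeping in $\sigma_5^{\phi,m+\hf}$ to be the most laborious step, while the genuinely delicate point is the uniform $L^\infty$ and $L^4$-gradient control of the discrete solution $\phih^m$, which is exactly what Lemma~\ref{lem:a-priori-stability-dtau-mu} provides.
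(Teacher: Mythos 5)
Your proposal is correct, and it is essentially the argument this paper relies on: the paper gives no in-text proof of this lemma, deferring entirely to \cite{diegel15b}, and your decomposition-plus-truncation route (exact splitting of the $\chi$ differences, the truncation bounds \eqref{eq:phi-truncation-3b} and \eqref{eq:phi-truncation-4}, and the unconditional $L^\infty$ and discrete-Laplacian/gradient-$L^4$ stability from Lemma~\ref{lem:a-priori-stability-dtau-mu} with \eqref{eq:grad-v-L4-bound}) is the standard proof that the cited reference carries out. The two delicate points you flag both check out: the identity $\sigma_4^{\phi,m+\hf}=\bar g^{m+\hf}\sigma_3^{\phi,m+\hf}+\phi^{m+\hf}\bigl(\bar g^{m+\hf}-g^{m+\hf}\bigr)$ is exact and reduces the first estimate to the two recorded truncation bounds, while the mean-zero Poincar\'e step needed to obtain gradient-only norms on the right-hand side is legitimate because the PDE conserves mass, the scheme is mass conservative, and the Ritz projection preserves means, so that $\iprd{\e^{\phi,m}}{1}=0$ and $\iprd{\sigma_3^{\phi,m+\hf}}{1}=0$.
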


	\begin{lem}
	\label{lem:sigma6-estimate}
Suppose that $(\phi, \mu, \bu, p)$ is a weak solution to \eqref{eq:weak-error-a} -- \eqref{eq:weak-error-d}, with the additional regularities in Assumption~\ref{asmp:initial-stability-3}. Then, there exists a constant $C>0$ independent of $h$ and $\tau$ such that, for any $h, \tau >0$,
	\begin{align}
\norm{\nabla \sigma_6^{\phi,m+\hf}}{}^2 \le & \ C \tau^3   \int_{t_{m-1}}^{t_{m+1}}\norm{\nabla\partial_{ss}\phi(s)}{}^2 \, ds + C\norm{\nabla \e^{\phi,m}}{}^2+ C\norm{\nabla \e^{\phi,m-1}}{}^2,
	\label{eq:phi-sigma6}
	\\
\norm{\nabla \sigma_6^{\bu,m+\hf}}{}^2 \le& \ C \tau^3   \int_{t_{m-1}}^{t_{m+1}}\norm{\nabla\partial_{ss}\bu(s)}{}^2 \, ds + C\norm{\nabla \e^{\bu,m}}{}^2+ C\norm{\nabla \e^{\bu,m-1}}{}^2 ,
	\label{eq:u-sigma6}
	\end{align}
where $\e^{\phi,m} := \phi^{m} - \phih^{m}$ and $\e^{\bu,m} := \bu^m - \buh^m$.
	\end{lem}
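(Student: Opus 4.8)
The plan is to prove both bounds by the same device: insert the \emph{exact-solution} Adams--Bashforth extrapolant and split each $\sigma_6$ into a purely temporal consistency error, controllable by Taylor expansion, plus a discretization error expressible through the already-introduced solution errors $\e^{\phi,m}$ and $\e^{\bu,m}$. For the phase variable, set $\tilde{\phi}^{m+\hf} := \frac32 \phi^m - \hf \phi^{m-1}$ and write
\[
\sigma_6^{\phi,m+\hf} = \left(\phi^{m+\hf} - \tilde{\phi}^{m+\hf}\right) + \left(\tilde{\phi}^{m+\hf} - \phitil^{m+\hf}\right).
\]
The first term involves only the smooth PDE solution, while the second is the extrapolation applied to the differences $\phi^k - \phih^k$.

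First I would handle the consistency term $\phi^{m+\hf} - \tilde{\phi}^{m+\hf} = \phi(t_{m+\hf}) - \frac32\phi(t_m) + \hf\phi(t_{m-1})$. Since this second-order extrapolation reproduces affine functions of $t$ exactly (one checks $\frac32 t_m - \hf t_{m-1} = t_{m+\hf}$), a Taylor expansion about $t_m$ with integral remainder leaves only a second-derivative contribution; applying $\nabla$ and Cauchy--Schwarz gives $\norm{\nabla(\phi^{m+\hf} - \tilde{\phi}^{m+\hf})}{}^2 \le C\tau^3 \int_{t_{m-1}}^{t_{m+1}} \norm{\nabla\partial_{ss}\phi(s)}{}^2\,ds$, which is precisely the truncation computation recorded in \eqref{eq:phi-truncation-7} of Lemma~\ref{lem:phi-truncation-errors} read for the extrapolated exact solution. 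For the second term, linearity of the extrapolation yields $\tilde{\phi}^{m+\hf} - \phitil^{m+\hf} = \frac32\e^{\phi,m} - \hf\e^{\phi,m-1}$, so that $\norm{\nabla(\tilde{\phi}^{m+\hf} - \phitil^{m+\hf})}{}^2 \le C\norm{\nabla\e^{\phi,m}}{}^2 + C\norm{\nabla\e^{\phi,m-1}}{}^2$ by the triangle inequality together with $(a+b)^2 \le 2a^2 + 2b^2$. Combining the two bounds with the same elementary inequality produces \eqref{eq:phi-sigma6}.

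The velocity estimate \eqref{eq:u-sigma6} then follows verbatim with $\bu$ in place of $\phi$: split $\sigma_6^{\bu,m+\hf} = (\bu^{m+\hf} - \tilde{\bu}^{m+\hf}) + (\tilde{\bu}^{m+\hf} - \butil^{m+\hf})$ about $\tilde{\bu}^{m+\hf} := \frac32\bu^m - \hf\bu^{m-1}$, bound the consistency part by $C\tau^3\int_{t_{m-1}}^{t_{m+1}}\norm{\nabla\partial_{ss}\bu(s)}{}^2\,ds$ via \eqref{eq:u-truncation-7} of Lemma~\ref{lem:u-truncation-errors}, and use $\tilde{\bu}^{m+\hf} - \butil^{m+\hf} = \frac32\e^{\bu,m} - \hf\e^{\bu,m-1}$ for the discretization part. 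There is essentially no analytical obstacle here: the argument is a decomposition plus the triangle inequality, and the only $\tau$-dependent work is the standard second-derivative remainder already encapsulated in the truncation lemmas. The single point requiring care is keeping the exact-solution extrapolant $\tilde{\phi}^{m+\hf}$ notationally distinct from the discrete extrapolant $\phitil^{m+\hf}$, so that the consistency term carries no dependence on $\phih$ and the solution-error term carries no spurious $\tau^3$ truncation contribution.
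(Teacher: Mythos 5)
Your proposal is correct and follows essentially the same route as the paper: the paper's (very terse) proof also splits $\sigma_6^{\phi,m+\hf}$ into the exact-solution extrapolation error, bounded by the truncation estimate \eqref{eq:phi-truncation-7}, plus the extrapolated errors $\frac32\e^{\phi,m}-\hf\e^{\phi,m-1}$, combined with the elementary inequality $(a+b+c)^2\le 3(a^2+b^2+c^2)$ (which is how the constants $3\cdot\frac{\tau^3}{12}$, $\frac{27}{4}$, and $\frac{3}{4}$ arise there). Your remark that \eqref{eq:phi-truncation-7} must be read for the exact-solution extrapolant $\tilde{\phi}^{m+\hf}=\frac32\phi^m-\hf\phi^{m-1}$ correctly resolves the paper's notational overloading of $\sigma_6^{\phi,m+\hf}$ between Lemma~\ref{lem:phi-truncation-errors} and Lemma~\ref{lem:sigma6-estimate}.
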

	\begin{proof}
For $1\le m\le M-1$, using the truncation error estimate~\eqref{eq:phi-truncation-7}, we obtain
	\begin{equation}
\norm{\nabla\sigma_6^{\phi,m+\hf}}{}^2 \le 3\frac{\tau^3}{12}  \int_{t_{m-1}}^{t_{m+1}}\norm{\nabla\partial_{ss}\phi(s)}{}^2 \, ds + \frac{27}{4}\norm{\nabla\e^{\phi,m}}{}^2 + \frac{3}{4}\norm{\nabla\e^{\phi,m-1}}{}^2 .
	\end{equation}
Estimate \eqref{eq:u-sigma6} similarly follows.
	\end{proof}

The following technical lemma is proved in \cite{diegel15}.
	\begin{lem}
	\label{lem:technical}
Suppose $g \in H^1(\Omega)$, and $v \in \Soh$.  Then
	\begin{equation}
\left|\iprd{g}{v}\right| \le C \norm{\nabla g}{} \, \norm{v}{-1,h}  ,
	\end{equation}
for some $C>0$ that is independent of $h$.
	\end{lem}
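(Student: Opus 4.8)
The plan is to realize the discrete negative norm through the inverse discrete Laplacian and then reduce the pairing $\iprd{g}{v}$ to a gradient inner product that is controlled by the $H^1$-stability of the $L^2$ projection. Recall that for $v\in\Soh$ the norm $\norm{v}{-1,h}$ is defined via the discrete Green's operator: letting $\psi\in\Soh$ be the unique solution of $-\Delta_h\psi = v$, equivalently $\aiprd{\psi}{\xi}=\iprd{v}{\xi}$ for all $\xi\in S_h$, one has $\norm{v}{-1,h}=\norm{\nabla\psi}{}$. This $\psi$ is well-defined because $\Delta_h$ restricts to a bijection on the mean-zero space $\Soh$, and the identity $\norm{v}{-1,h}=\norm{\nabla\psi}{}$ follows by testing $-\Delta_h\psi=v$ against $\psi$ itself and invoking \eqref{eq:discrete-laplacian}.

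First I would introduce the $L^2$ projection $Q_h\colon L^2(\Omega)\to S_h$, defined by $\iprd{Q_h g - g}{\xi}=0$ for all $\xi\in S_h$. Since $v\in\Soh\subset S_h$, orthogonality immediately gives $\iprd{g}{v}=\iprd{Q_h g}{v}$. Next, because $Q_h g\in S_h$ is an admissible test function in the definition of the discrete Laplacian, I would substitute $v=-\Delta_h\psi$ and use \eqref{eq:discrete-laplacian} with $\xi = Q_h g$ to obtain
\[
\iprd{g}{v}=\iprd{Q_h g}{-\Delta_h\psi}=\aiprd{\psi}{Q_h g}=\iprd{\nabla\psi}{\nabla Q_h g}.
\]
A single application of the Cauchy--Schwarz inequality then yields $\left|\iprd{g}{v}\right|\le\norm{\nabla Q_h g}{}\,\norm{\nabla\psi}{}=\norm{\nabla Q_h g}{}\,\norm{v}{-1,h}$.

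It remains to bound $\norm{\nabla Q_h g}{}$ by $\norm{\nabla g}{}$, and this is both the crux of the argument and the step that genuinely requires the mesh hypotheses. On a quasi-uniform family of triangulations (assumed in Assumption~\ref{asmp:initial-stability-2}) the $L^2$ projection is $H^1$-stable, so $\norm{Q_h w}{H^1}\le C\norm{w}{H^1}$. To pass from this full $H^1$ bound to the gradient-only bound in the statement, I would exploit that $Q_h$ preserves constants, whence $\nabla Q_h g=\nabla Q_h(g-\overline{g})$ for the mean $\overline{g}$ of $g$; applying $H^1$-stability to $g-\overline{g}$ and then the Poincar\'e--Wirtinger inequality $\norm{g-\overline{g}}{}\le C\norm{\nabla g}{}$ gives $\norm{\nabla Q_h g}{}\le C\norm{g-\overline{g}}{H^1}\le C\norm{\nabla g}{}$. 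Combining this with the previous display completes the proof.

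The main obstacle is precisely the $H^1$-stability of the $L^2$ projection: it can fail on arbitrarily graded meshes and is exactly where quasi-uniformity is needed. One could alternatively route the argument through the Ritz projection $R_h$ in place of $Q_h$, but the $L^2$-projection approach is the most direct given its constant-preserving property and the availability of the Poincar\'e--Wirtinger estimate.
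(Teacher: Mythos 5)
Your proof is correct. The paper gives no proof of this lemma itself---it simply states that the proof appears in \cite{diegel15}---so there is no in-paper argument to compare against; your route (realize $\norm{v}{-1,h}=\norm{\nabla\psi}{}$ through the discrete Green's operator, replace $g$ by $Q_h g$ using $L^2$-orthogonality, then conclude by Cauchy--Schwarz, the $H^1$-stability of $Q_h$, constant preservation, and Poincar\'e--Wirtinger) is the standard one in this line of work and is in the spirit of the cited reference. The ingredients you flag are indeed available in this paper: the triangulations are assumed globally quasi-uniform (Section~\ref{subsec-defn} and Assumption~\ref{asmp:initial-stability-2}), which is exactly what the $H^1$-stability of the $L^2$ projection requires, and your $\psi$ coincides with the operator $\mathsf{T}_h$ that the paper itself introduces in the proof of Lemma~\ref{lem:dtau-phi-error-estimate}, so your realization of the $-1,h$ norm is consistent with how the norm is used throughout the paper.
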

	
We use only some very basic estimates for the trilinear form $B$:
	\begin{lem}
	\label{lem:technical-Bform}
Suppose $\bu, \bv, \bw \in {\bf H}_0^1(\Omega)$. Then
	\begin{equation}
\left|\Bform{\bu}{\bv}{\bw}\right| \le C \norm{\nabla \bu}{} \norm{\nabla \bv}{} \norm{\nabla \bw}{}.
	\label{lem:technical-Bform-1}
	\end{equation}
If $\bu\in{\bf L}^\infty(\Omega)$ and  $\bv, \bw \in {\bf H}_0^1(\Omega)$, then
	\begin{equation}
\left|\Bform{\bu}{\bv}{\bw}\right| \le C \norm{ \bu}{L^\infty} \norm{\nabla \bv}{} \norm{\nabla \bw}{}.	
	\label{lem:technical-Bform-3}
	\end{equation}
If $\bu \in {\bf L}^2(\Omega)$, $\bv,\bw\in {\bf H}^1(\Omega)\cap{\bf L}^\infty(\Omega)$, then
	\begin{equation}
\left|\Bform{\bu}{\bv}{\bw}\right| \le  \norm{\bu}{} \left(\norm{\nabla \bv}{} \norm{\bw}{L^{\infty}} + \norm{\nabla \bw}{} \norm{\bv}{L^{\infty}}\right).
	\label{lem:technical-Bform-5}
	\end{equation}
	\end{lem}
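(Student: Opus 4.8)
The plan is to bound each of the two advection triple products in the skew-symmetric form
\[
\Bform{\bu}{\bv}{\bw} = \hf\left[\iprd{\bu\cdot\nabla\bv}{\bw} - \iprd{\bu\cdot\nabla\bw}{\bv}\right]
\]
separately by a single application of Hölder's inequality, distributing the three factors over $L^p$ spaces exactly as dictated by the regularity hypotheses in each case, and then converting the resulting norms into the gradient and sup-norms on the right-hand side via the Poincaré inequality and the Sobolev embedding ${\bf H}_0^1(\Omega)\hookrightarrow{\bf L}^4(\Omega)$, which is valid for $d=2,3$. Because $B$ is skew-symmetric in its last two arguments, the second term is structurally identical to the first after interchanging the roles of $\bv$ and $\bw$, so in each case it suffices to estimate one term.

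For \eqref{lem:technical-Bform-1}, I would use the $2$--$4$--$4$ Hölder split $\left|\iprd{\bu\cdot\nabla\bv}{\bw}\right| \le \norm{\bu}{L^4}\norm{\nabla\bv}{}\norm{\bw}{L^4}$ and then apply $\norm{\bu}{L^4}\le C\norm{\nabla\bu}{}$ and $\norm{\bw}{L^4}\le C\norm{\nabla\bw}{}$ (embedding together with Poincaré, since $\bu,\bw\in{\bf H}_0^1$). For \eqref{lem:technical-Bform-3}, I would instead place $\bu$ in $L^\infty$ and keep both $\nabla\bv$ and $\bw$ in $L^2$, giving $\left|\iprd{\bu\cdot\nabla\bv}{\bw}\right|\le\norm{\bu}{L^\infty}\norm{\nabla\bv}{}\norm{\bw}{}$, and close with $\norm{\bw}{}\le C\norm{\nabla\bw}{}$ by Poincaré. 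For \eqref{lem:technical-Bform-5}, I would put $\bu$ in $L^2$, $\nabla\bv$ in $L^2$, and $\bw$ in $L^\infty$ in the first term, and symmetrically $\bu$ in $L^2$, $\nabla\bw$ in $L^2$, and $\bv$ in $L^\infty$ in the second term; the two $\hf$ prefactors then combine to yield precisely $\norm{\bu}{}\left(\norm{\nabla\bv}{}\norm{\bw}{L^\infty}+\norm{\nabla\bw}{}\norm{\bv}{L^\infty}\right)$ with constant one, as stated.

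Since every bound is an immediate consequence of Hölder plus a standard embedding or Poincaré, there is no genuine analytical obstacle here; the only points requiring care are bookkeeping ones. The embedding ${\bf H}_0^1\hookrightarrow{\bf L}^4$ is invoked only in the first estimate, which is what confines that argument to $d\le 4$ (covering $d=2,3$), while the latter two estimates hold in any dimension. Moreover, all three bounds are obtained by estimating the two terms of $B$ directly, with no integration by parts and no use of the divergence-free or boundary conditions, so \eqref{lem:technical-Bform-5} in particular holds for every $\bu\in{\bf L}^2(\Omega)$ without further hypotheses.
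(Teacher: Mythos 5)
Your proof is correct. The paper states this lemma without proof, treating it as a collection of ``very basic estimates,'' and your H\"older--Sobolev--Poincar\'e argument (with the ${\bf H}_0^1\hookrightarrow{\bf L}^4$ embedding only needed for \eqref{lem:technical-Bform-1}, and no integration by parts or divergence-free structure anywhere) is precisely the standard justification it presupposes; the only slip is cosmetic, since the $\hf$ prefactors actually give \eqref{lem:technical-Bform-5} with constant $\hf$ rather than one, which is stronger than what is stated.
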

	
We also recall some basic inverse inequalities.
    \begin{align}
\norm{\varphi_h}{W^m_q} \leq Ch^{\nicefrac{d}{q} - \nicefrac{d}{p}}h^{\ell-m}\norm{\varphi_h}{W^\ell_p}, &\quad \forall\, \varphi_h \in \mathcal{M}_r^h, \ 1 \le p \le q \le \infty, \ 0\le \ell \le m \le 1,
	\label{Inverse-estimate-1}
	\end{align}
From this and the Gagliardo-Nirenburg and Poincar\'{e} inequalities it follows that~\cite{baker76}
	\begin{equation}
\norm{\varphi_h}{L^\infty} \le C h^{\frac{1}{2} -\frac{d}{2}}\norm{\varphi_h}{ }^\frac{1}{2}\norm{\nabla\varphi_h}{ }^\frac{1}{2} , \quad d= 2,3,
	\label{sobolev-inverse-estimate}
	\end{equation}
for all $\varphi_h\in \mathcal{M}_{r,0}^h$.

	\begin{lem}
Let $( {\bf P}_h, P_h) : {\bf V} \times L^2_0 \to \Vh \times \Soh$ be defined as in \eqref{eq:Stokes-projection} and suppose that $(\phi, \mu, \bu, p)$ is a weak solution to \eqref{eq:weak-error-a} -- \eqref{eq:weak-error-d}, with the additional regularities in Assumption~\ref{asmp:initial-stability-3}. Then, for any $h$, $\tau >0$ there exists a constant $C>0$, independent of $h$ and $\tau$, such that, for $0 \le m \le M-1$,
	\begin{equation}
\norm{{\bf P}_h \bu}{L^\infty(0,T;L^{\infty}(\Omega))} \le C,
	\label{eq:max-stability-velecity-projection}
	\end{equation}
and, as a simple consequence, 
	\begin{equation}
\norm{\eA^{\bu}}{L^\infty(0,T;L^{\infty}(\Omega))} \le C.
	\label{eq:max-stability-velecity-approx-error}
	\end{equation}
	\end{lem}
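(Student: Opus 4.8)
The plan is to bound $\norm{{\bf P}_h\bu}{L^\infty}$ by comparing ${\bf P}_h\bu$ with a finite element interpolant of $\bu$, so that the inverse estimate \eqref{sobolev-inverse-estimate} can be brought to bear. First I would record that, under Assumption~\ref{asmp:initial-stability-3}, $\bu\in L^\infty(0,T;{\bf H}^{q+1}(\Omega))$ with $q\ge 1$, so that in dimension $d\le 3$ the Sobolev embedding ${\bf H}^{q+1}(\Omega)\hookrightarrow {\bf H}^2(\Omega)\hookrightarrow {\bf L}^\infty(\Omega)$ gives $\norm{\bu}{L^\infty(0,T;L^\infty)}\le C$. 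Thus it suffices to control the projection error ${\bf P}_h\bu-\bu$ in the $L^\infty$ norm, uniformly in $t$.

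Next, let $I_h\bu\in\Xh$ be a standard interpolant of $\bu$ (e.g. the Scott--Zhang interpolant, which respects the homogeneous boundary condition since $\bu\in{\bf H}_0^1(\Omega)$). I would split
\[
{\bf P}_h\bu-\bu = \left({\bf P}_h\bu - I_h\bu\right) + \left(I_h\bu - \bu\right),
\]
and treat the two pieces differently. The interpolation error is handled by the standard estimate $\norm{I_h\bu-\bu}{L^\infty}\le C$, uniform in $t$, which follows from $L^\infty$-stability of $I_h$ together with $\bu\in L^\infty(0,T;{\bf L}^\infty)$. The essential piece is ${\bf P}_h\bu - I_h\bu\in\Xh$, which is a finite element function, so the inverse estimate \eqref{sobolev-inverse-estimate} applies:
\[
\norm{{\bf P}_h\bu - I_h\bu}{L^\infty} \le C h^{\frac{1}{2}-\frac{d}{2}}\, \norm{{\bf P}_h\bu - I_h\bu}{}^{\frac12}\,\norm{\nabla\left({\bf P}_h\bu - I_h\bu\right)}{}^{\frac12}.
\]

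I would then estimate the two factors by combining the Stokes projection bound \eqref{stokes-estimate} (in the suboptimal case $s=q$, which is all Assumption~\ref{asmp:initial-stability-3} supplies) with the corresponding interpolation estimates, obtaining $\norm{{\bf P}_h\bu-I_h\bu}{}\le Ch^{q+1}$ and $\norm{\nabla({\bf P}_h\bu-I_h\bu)}{}\le Ch^{q}$, both uniformly in $t$. Substituting yields
\[
\norm{{\bf P}_h\bu - I_h\bu}{L^\infty} \le C h^{\frac{1}{2}-\frac{d}{2}}\left(h^{q+1}\right)^{\frac12}\left(h^{q}\right)^{\frac12} = C\, h^{\,q+1-\frac{d}{2}}.
\]
Since $q\ge 1$ and $d\in\{2,3\}$, the exponent satisfies $q+1-\tfrac{d}{2}\ge\tfrac12>0$, so this contribution is uniformly bounded (indeed it vanishes as $h\to 0$). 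Assembling the three pieces gives $\norm{{\bf P}_h\bu}{L^\infty(0,T;L^\infty)}\le C$, which is \eqref{eq:max-stability-velecity-projection}. Estimate \eqref{eq:max-stability-velecity-approx-error} is then immediate from the triangle inequality $\norm{\eA^{\bu}}{L^\infty}\le\norm{\bu}{L^\infty}+\norm{{\bf P}_h\bu}{L^\infty}\le C$.

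The main obstacle is the negative power $h^{\frac12-\frac{d}{2}}$ produced by the inverse estimate: it can only be absorbed because we are permitted to use the full convergence \emph{rates} of the Stokes projection and the interpolant (not merely their boundedness), and because the regularity $\bu\in{\bf H}^{q+1}$ with $q\ge 1$ forces the net exponent $q+1-\tfrac{d}{2}$ to be nonnegative in both two and three dimensions. Routing the argument through the finite element interpolant $I_h\bu$ is indispensable, since the inverse estimate \eqref{sobolev-inverse-estimate} is valid only for discrete functions and cannot be applied to the projection error ${\bf P}_h\bu-\bu$ directly.
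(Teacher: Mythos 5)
Your proof is correct and follows essentially the same route as the paper's: compare ${\bf P}_h\bu$ with a nodal interpolant $\bw=\mathcal{I}_h\bu\in\Xh$, apply an inverse estimate to the discrete difference ${\bf P}_h\bu-\bw$, and use the Stokes projection bound \eqref{stokes-estimate} (with $s=q$) together with interpolation estimates to obtain the net rate $h^{q+1-\frac{d}{2}}$, which is bounded since $q\ge 1$ and $d\le 3$. The only cosmetic difference is that the paper uses the plain $L^2$-to-$L^\infty$ inverse inequality $\norm{\varphi_h}{L^\infty}\le Ch^{-d/2}\norm{\varphi_h}{}$ from \eqref{Inverse-estimate-1} (requiring only $L^2$ error bounds), whereas you use the multiplicative estimate \eqref{sobolev-inverse-estimate} (requiring in addition the $H^1$ projection error $\norm{\nabla({\bf P}_h\bu-I_h\bu)}{}\le Ch^q$); both yield the same exponent $q+1-\frac{d}{2}$.
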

	
	\begin{proof}
Let $\bw =\mathcal{I}_h \bu\in \Xh$, the standard Lagrange nodal interpolant of $\bu$. Following Baker's unpublished paper \cite{baker76} and using standard finite element approximations, including \eqref{stokes-estimate}, inverse inequalities, and Sobolev's embedding theorem, we have
	\begin{align*}
\norm{{\bf P}_h \bu}{L^\infty} &= \norm{{\bf P}_h \bu - \bw + \bw - \bu + \bu}{L^\infty}
	\\
&\le \norm{{\bf P}_h \bu - \bw}{L^\infty} + \norm{\bw - \bu}{L^\infty} + \norm{\bu}{L^\infty}
	\\
&\le C h^{-\frac{d}{2}} \norm{{\bf P}_h \bu - \bw}{} + \norm{\bw - \bu}{L^\infty} + \norm{\bu}{L^\infty}
	\\
&\le C h^{-\frac{d}{2}} \left(\norm{{\bf P}_h \bu - \bu}{} + \norm{\bu - \bw}{}\right) + \norm{\bw - \bu}{L^\infty} + \norm{\bu}{L^\infty}
	\\
&\le C \left( \norm{\bu - \bw}{L^\infty} + h^{-\frac{d}{2}}\norm{\bu - \bw}{}\right) + C h^{-\frac{d}{2}} \norm{{\bf P}_h \bu - \bu}{} + \norm{\bu}{L^\infty}
	\\
&\le \norm{\bu}{L^\infty} + C h^{q+1-\frac{d}{2}} \left(\left| \bu \right|_{H^{q+1}} + \left| p \right|_{H^{q}} \right).
	\end{align*}
Taking the $L^\infty$ norm over $(0,T)$ and noting that $q \ge 1$, the proof is concluded.
	\end{proof}
	
We now proceed to estimate the terms on the right-hand-side of \eqref{eq:error-eq}. 

	\begin{lem}
	\label{lem:error-rhs-control}
Suppose that $(\phi, \mu, \bu, p)$ is a weak solution to \eqref{eq:weak-error-a} -- \eqref{eq:weak-error-d}, with the additional regularities in Assumption~\ref{asmp:initial-stability-3}.  Then, for any $h$, $\tau >0$ and any $\alpha > 0$ there exist a constant $C=C(\alpha,T)>0$, independent of $h$ and $\tau$, such that, for $1 \le m \le M-1$,
	\begin{align}
& \frac{\varepsilon}{2\tau} \left( \norm{\nabla\eh^{\phi,m+1}}{}^2 - \norm{\nabla \eh^{\phi,m}}{}^2\right) + \frac{1}{2 \tau \gamma} \left(\norm{ \eh^{\bu,m+1}}{}^2 - \norm{\eh^{\bu,m}}{}^2\right) + \frac{\varepsilon\tau^2}{4} \, \aiprd{\ddtau\eh^{\phi,m}}{\dtau \eh^{\phi, m+\hf}} 
	\nonumber
	\\
& \qquad + \frac{3\varepsilon}{4} \norm{\nabla \eh^{\mu,m+\hf}}{}^2 + \frac{\eta}{2\gamma} \, \norm{\nabla \bar{\e}_h^{\bu,m+\hf} }{}^2 \le \ C \norm{\nabla \eh^{\phi,m+1}}{}^2 + C \norm{\nabla \eh^{\phi,m}}{}^2  +  C \norm{\nabla \eh^{\phi,m-1}}{}^2  
	\nonumber
	\\
& \qquad +  C \norm{\eh^{\bu,m}}{}^2 +  C \norm{\eh^{\bu,m-1}}{}^2 + \alpha \norm{\dtau \eh^{\phi,m+\hf}}{-1,h}^2  + C \mathcal{R}^{m+\hf},
	\label{eq:right-hand-side-estimate}
	\end{align}
where	
	\begin{align}
\mathcal{R}^{m+\hf} := &\,  \frac{h^{2q+2}}{\tau} \int_{t_m}^{t_{m+1}} \norm{\partial_s\phi(s)}{H^{q+1}}^2  ds + \frac{h^{2q+2}}{\tau} \int_{t_m}^{t_{m+1}} \norm{\partial_s\bu(s)}{H^{q+1}}^2  ds +  \tau^3 \int_{t_m}^{t_{m+1}}\norm{\partial_{sss}\phi(s)}{}^2 ds 
	\nonumber
	\\
&+  \tau^3 \int_{t_m}^{t_{m+1}}\norm{\partial_{sss}\bu(s)}{}^2 ds +  \tau^3 \int_{t_m}^{t_{m+1}} \norm{\partial_{ss} \phi^2(s)}{H^1}^2 ds +  \tau^3 \int_{t_{m-1}}^{t_{m+1}}\norm{\nabla\partial_{ss}\phi(s)}{}^2 \, ds 
	\nonumber
	\\
& +  \tau^3 \int_{t_{m-1}}^{t_{m+1}}\norm{\nabla\partial_{ss}\bu(s)}{}^2 \, ds +  \tau^3 \int_{t_{m-1}}^{t_{m+1}} \norm{\nabla \Delta \partial_{ss} \phi(s)}{}^2 ds + \tau^3 \int_{t_m}^{t_{m+1}} \norm{\partial_{ss} p(s)}{}^2 ds
	\nonumber
	\\
& + h^{2q}\left| \mu^{m+\hf} \right|_{H^{q+1}}^2 +  h^{2q}\left| \phi^{m+1} \right|_{H^{q+1}}^2  +  h^{2q}\left| \phi^{m} \right|_{H^{q+1}}^2 +  h^{2q}\left| \phi^{m-1} \right|_{H^{q+1}}^2 +  h^{2q}\left| \bu^{m+1} \right|_{H^{q+1}}^2  
	\nonumber
	\\
& +  h^{2q}\left| \bu^{m} \right|_{H^{q+1}}^2 +  h^{2q}\left| \bu^{m-1} \right|_{H^{q+1}}^2 +  h^{2q}\left| p^{m+1} \right|_{H^{q+1}}^2  +  h^{2q}\left| p^{m} \right|_{H^{q+1}}^2  +  h^{2q}\left| p^{m-1} \right|_{H^{q+1}}^2.
	\label{eq:consistency}
	\end{align}

	\end{lem}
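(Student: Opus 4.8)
The plan is to estimate, one at a time, each term on the right-hand side of the key error identity \eqref{eq:error-eq}, sacrificing a fixed fraction of the two coercive quantities $\varepsilon\norm{\nabla\eh^{\mu,m+\hf}}{}^2$ and $\frac{\eta}{\gamma}\norm{\nabla\bar{\e}_h^{\bu,m+\hf}}{}^2$ so that only $\frac{3\varepsilon}{4}$ and $\frac{\eta}{2\gamma}$ of them remain on the left of \eqref{eq:right-hand-side-estimate}, while the artifact term $\frac{\varepsilon\tau^2}{4}\aiprd{\ddtau\eh^{\phi,m}}{\dtau\eh^{\phi,m+\hf}}$ is carried along untouched, to be handled only in the later summation step. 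Throughout I use that both $\eh^{\phi,m}$ and $\dtau\eh^{\phi,m+\hf}$ lie in $\Soh$ (mass conservation of the scheme together with the mean-preserving property of $R_h$ forces $\iprd{\eh^{\phi,m}}{1}=0$), and I split each full error through $\e^{\phi,k}=\eA^{\phi,k}+\eh^{\phi,k}$ and $\e^{\bu,k}=\eA^{\bu,k}+\eh^{\bu,k}$, sending the $O(h^{q})$ projection pieces to $\mathcal{R}^{m+\hf}$ and using the Poincaré inequality $\norm{\eh^{\phi,k}}{}\le C\norm{\nabla\eh^{\phi,k}}{}$ to turn discrete phase errors into the gradient norms appearing on the right.

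First I dispatch the consistency terms. Every pairing of a consistency quantity against $\dtau\eh^{\phi,m+\hf}$ is handled by Lemma~\ref{lem:technical}: directly for the $L^2$ pairings with $\eA^{\mu,m+\hf}$, $\sigma_4^{\phi,m+\hf}$, $\sigma_5^{\phi,m+\hf}$, $\sigma_6^{\phi,m+\hf}$, and, after an integration by parts that exposes a Laplacian (the exact fields satisfy homogeneous Neumann conditions), for the $\aiprd{}{}$ pairings with $\sigma_3^{\phi,m+\hf}$ and $\ddtau\phi^m$. Each such term produces $\alpha\norm{\dtau\eh^{\phi,m+\hf}}{-1,h}^2$ plus a squared consistency norm fed to Lemmas~\ref{lem:phi-truncation-errors}, \ref{lem:phi-sigma4-estimate}, \ref{lem:sigma6-estimate}: the $\sigma_4$ and temporal pieces land in $\mathcal{R}^{m+\hf}$, while $\sigma_5$ and $\sigma_6$ contribute the admissible $\norm{\nabla\eh^{\phi,m+1}}{}^2$, $\norm{\nabla\eh^{\phi,m}}{}^2$, $\norm{\nabla\eh^{\phi,m-1}}{}^2$. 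The velocity/pressure terms $\frac1\gamma\iprd{\sigma_1^{\bu,m+\hf}+\sigma_2^{\bu,m+\hf}}{\bar{\e}_h^{\bu,m+\hf}}$, $\frac\eta\gamma\aiprd{\sigma_3^{\bu,m+\hf}}{\bar{\e}_h^{\bu,m+\hf}}$ and $-\frac1\gamma\cform{\bar{\e}_h^{\bu,m+\hf}}{\sigma_3^{p,m+\hf}}$ are bounded by Cauchy--Schwarz and Young, trading $L^2$ norms for $\norm{\nabla\bar{\e}_h^{\bu,m+\hf}}{}$ via Poincaré on $\bar{\e}_h^{\bu}\in\Xh$ and $\norm{\nabla\cdot\bar{\e}_h^{\bu}}{}\le C\norm{\nabla\bar{\e}_h^{\bu}}{}$; the term $\iprd{\sigma_1^{\phi}+\sigma_2^{\phi}}{\eh^{\mu,m+\hf}}$ is treated likewise, using that $\sigma_1^{\phi}+\sigma_2^{\phi}$ has zero mean to replace $\eh^{\mu,m+\hf}$ by its mean-free part before absorbing into $\frac{3\varepsilon}{4}\norm{\nabla\eh^{\mu,m+\hf}}{}^2$.

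The heart of the argument is the four coupling terms $-\bform{\phi^{m+\hf}}{\bu^{m+\hf}}{\eh^{\mu,m+\hf}}+\bform{\phitil^{m+\hf}}{\bbuh^{m+\hf}}{\eh^{\mu,m+\hf}}+\bform{\phi^{m+\hf}}{\bar{\e}_h^{\bu,m+\hf}}{\mu^{m+\hf}}-\bform{\phitil^{m+\hf}}{\bar{\e}_h^{\bu,m+\hf}}{\muh^{m+\hf}}$ together with the two convection terms. Adding and subtracting $\phi^{m+\hf}$ and $\muh^{m+\hf}$, and using $\phitil^{m+\hf}-\phi^{m+\hf}=-\sigma_6^{\phi,m+\hf}$, $\bbuh^{m+\hf}-\bu^{m+\hf}=\sigma_3^{\bu,m+\hf}-\bar{\eA}^{\bu,m+\hf}-\bar{\e}_h^{\bu,m+\hf}$, and $\mu^{m+\hf}-\muh^{m+\hf}=\eA^{\mu,m+\hf}+\eh^{\mu,m+\hf}$, the dangerous product of the two large errors cancels \emph{exactly}: the contribution $+\bform{\phi^{m+\hf}}{\bar{\e}_h^{\bu,m+\hf}}{\eh^{\mu,m+\hf}}$ coming from the last two coupling terms is matched by $-\bform{\phi^{m+\hf}}{\bar{\e}_h^{\bu,m+\hf}}{\eh^{\mu,m+\hf}}$ coming from the first two. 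Likewise, the same decomposition of the convection terms isolates $\Bform{\bu^{m+\hf}}{\bar{\e}_h^{\bu,m+\hf}}{\bar{\e}_h^{\bu,m+\hf}}=0$, which vanishes by the skew-symmetry of $B$ in its last two arguments. What survives is a list of terms each carrying a small consistency factor ($\sigma_3$, $\sigma_6$), an approximation error ($\eA^{\bu}$, $\eA^{\mu}$), or at most one discrete error. The benign pieces, such as $\bform{\phi^{m+\hf}}{\sigma_3^{\bu,m+\hf}}{\eh^{\mu,m+\hf}}$, are handled by the antisymmetry $\bform{\psi}{\bv}{\nu}=-\bform{\nu}{\bv}{\psi}$ valid for a pointwise divergence-free, boundary-vanishing middle argument (which $\sigma_3^{\bu}$ is, since $\nabla\cdot\bu^{m+\hf}=\nabla\cdot\bar{\bu}^{m+\hf}=0$ and $\bu$ vanishes on $\partial\Omega$), relocating a derivative onto $\eh^{\mu}$ and absorbing into $\frac{3\varepsilon}{4}\norm{\nabla\eh^{\mu,m+\hf}}{}^2$ using $\norm{\phi^{m+\hf}}{L^\infty}\le C$; the convection remainders are bounded with Lemma~\ref{lem:technical-Bform}, the uniform regularity $\norm{\bu^{m+\hf}}{L^\infty},\norm{\nabla\bu^{m+\hf}}{L^3}\le C$ keeping the extrapolation defect $\sigma_6^{\bu}$ in $L^2$, so that its $L^2$ bound (the analogue of Lemma~\ref{lem:sigma6-estimate}) supplies exactly the admissible $\norm{\eh^{\bu,m}}{}^2$, $\norm{\eh^{\bu,m-1}}{}^2$ and absorbs into $\frac{\eta}{2\gamma}\norm{\nabla\bar{\e}_h^{\bu,m+\hf}}{}^2$.

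Two points require care and together constitute what I expect to be the main obstacle. First, $\eh^{\mu,m+\hf}$ is not mean free, so coupling terms in which it occupies the gradient-free third slot cannot be absorbed by $\norm{\nabla\eh^{\mu,m+\hf}}{}$ alone; I control its mean by testing \eqref{eq:error-b} with $\psi\equiv1$, which annihilates every $\aiprd{}{}$ term and expresses $\overline{\eh^{\mu,m+\hf}}$ through $\eA^{\mu}$, $\sigma_4^{\phi}$, $\sigma_5^{\phi}$, $\sigma_6^{\phi}$, whence Poincaré--Wirtinger gives $\norm{\eh^{\mu,m+\hf}}{}^2\le C\norm{\nabla\eh^{\mu,m+\hf}}{}^2+C\sum_{k=m-1}^{m+1}\norm{\nabla\eh^{\phi,k}}{}^2+C\mathcal{R}^{m+\hf}$. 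Second, the genuinely delicate survivors are $-\bform{\sigma_6^{\phi,m+\hf}}{\bbuh^{m+\hf}}{\eh^{\mu,m+\hf}}$ and $\bform{\sigma_6^{\phi,m+\hf}}{\bar{\e}_h^{\bu,m+\hf}}{\muh^{m+\hf}}$, where the numerical fields $\bbuh^{m+\hf}$ and $\muh^{m+\hf}$ are controlled only in $L^2$ and are merely discretely divergence free, so a naive Hölder split would produce only the summable factors $\norm{\nabla\bbuh^{m+\hf}}{}^2$, $\norm{\muh^{m+\hf}}{H^1}^2$ rather than $m$-independent constants. Closing these is exactly where the uniform $\ell^\infty$ stability of the numerical solution from Lemma~\ref{lem:a-priori-stability-dtau-mu} becomes indispensable --- through $\norm{\muh^{m+\hf}}{}\le C$, $\norm{\phih^m}{L^\infty}\le C$, and $\norm{\nabla\phih^m}{L^4}\le C$ (the last via \eqref{eq:grad-v-L4-bound} from the uniform $\norm{\Delta_h\phih^m}{}$ bound) --- enabling the treatment ``in the spirit of Baker'' and ensuring that \eqref{eq:right-hand-side-estimate} holds with the genuinely $m$-independent constant demanded by the subsequent Gronwall argument.
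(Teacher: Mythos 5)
Your overall architecture --- term-by-term estimation of the right-hand side of \eqref{eq:error-eq}, Lemma~\ref{lem:technical} with the $\norm{\cdot}{-1,h}$ norm for the phase consistency terms, an add--subtract cancellation of the dangerous $\pm\bform{\cdot}{\bar{\e}_h^{\bu,m+\hf}}{\eh^{\mu,m+\hf}}$ pair, and skew-symmetry of $B$ to kill the quadratic convection term --- is exactly the paper's. The genuine gap is in \emph{where you pivot the add--subtract} for the coupling terms. You expand around the exact fields, so your surviving terms are $-\bform{\sigma_6^{\phi,m+\hf}}{\bbuh^{m+\hf}}{\eh^{\mu,m+\hf}}$ and $\bform{\sigma_6^{\phi,m+\hf}}{\bar{\e}_h^{\bu,m+\hf}}{\muh^{m+\hf}}$, and you propose to close them with $\norm{\muh^{m+\hf}}{}\le C$, $\norm{\phih^m}{L^\infty}\le C$, $\norm{\nabla\phih^m}{L^4}\le C$ from Lemma~\ref{lem:a-priori-stability-dtau-mu}. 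Those bounds do not suffice. Consider $\bform{\sigma_6^{\phi,m+\hf}}{\bar{\e}_h^{\bu,m+\hf}}{\muh^{m+\hf}}$: to retain the quadratic error structure you must keep $\nabla\sigma_6^{\phi,m+\hf}$ in $L^2$, since only its $L^2$ norm is controlled by $\norm{\nabla\eh^{\phi,m}}{}^2+\norm{\nabla\eh^{\phi,m-1}}{}^2+\mathcal{R}^{m+\hf}$ (Lemma~\ref{lem:sigma6-estimate}); H\"older then forces $\muh^{m+\hf}$ into $L^4$ (i.e., $H^1$), for which only the time-integrated bounds \eqref{eq:sum-3D-good-mu}, \eqref{eq:sum-mu-discrete} exist --- an $m$-dependent coefficient, not the $m$-independent constant the lemma asserts. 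If instead you use only $\norm{\muh^{m+\hf}}{}\le C$, H\"older puts $\nabla\sigma_6^{\phi,m+\hf}$ in $L^4$, where your stability bounds make it merely $O(1)$, so you are left with $\frac{\eta}{22\gamma}\norm{\nabla\bar{\e}_h^{\bu,m+\hf}}{}^2+C$ and a non-small constant per time step, destroying the $O(\tau^4+h^{2q})$ rate; putting $\bar{\e}_h^{\bu,m+\hf}$ in $L^\infty$ instead costs $h^{\frac{1-d}{2}}$ through \eqref{sobolev-inverse-estimate} with nothing to pay for it. The same dilemma afflicts $-\bform{\sigma_6^{\phi,m+\hf}}{\bbuh^{m+\hf}}{\eh^{\mu,m+\hf}}$, since $\norm{\nabla\bbuh^{m+\hf}}{}$ is not uniformly bounded either.

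The paper pivots the opposite way precisely to avoid this: in \eqref{eq:error-estimate-10} the four survivors are $\bform{\sigma_6^{\phi,m+\hf}}{\bu^{m+\hf}}{\eh^{\mu,m+\hf}-\overline{\e_h^{\mu,m+\hf}}}$, $\bform{\sigma_6^{\phi,m+\hf}}{\bar{\e}_h^{\bu,m+\hf}}{\mu^{m+\hf}}$, $\bform{\phitil^{m+\hf}}{\bar{\e}_a^{\bu,m+\hf}-\sigma_3^{\bu,m+\hf}}{\eh^{\mu,m+\hf}-\overline{\e_h^{\mu,m+\hf}}}$, and $\bform{\phitil^{m+\hf}}{\bar{\e}_h^{\bu,m+\hf}}{\eA^{\mu,m+\hf}}$, so every small factor is paired either with an exact field ($\bu^{m+\hf}$, $\mu^{m+\hf}$, bounded in $L^4$ under Assumption~\ref{asmp:initial-stability-3}) or with $\phitil^{m+\hf}$, which is uniformly $H^1$-bounded by the energy stability \eqref{eq:Linf-phi-discrete}; the discrete $\bbuh^{m+\hf}$ and $\muh^{m+\hf}$ never appear as standalone factors, and the mean subtraction in the third slot is free because the middle slot is then pointwise divergence-free (your extra device of controlling $\overline{\e_h^{\mu,m+\hf}}$ by testing \eqref{eq:error-b} with $\psi\equiv 1$ is correct but becomes unnecessary). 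To repair your route you would have to re-expand $\bbuh^{m+\hf}$ and $\muh^{m+\hf}$ as exact field plus errors, which reproduces the paper's decomposition together with additional cubic-in-error cross terms such as $\bform{\sigma_6^{\phi,m+\hf}}{\bar{\e}_h^{\bu,m+\hf}}{\eh^{\mu,m+\hf}}$; these can indeed be closed with your $\norm{\nabla\sigma_6^{\phi,m+\hf}}{L^4}\le C$ observation, but only at the price of a term $C\norm{\eh^{\bu,m+1}}{}^2$ on the right, which is outside the stated form of \eqref{eq:right-hand-side-estimate}. By contrast, your convection treatment, once $\bbuh^{m+\hf}$ is so re-expanded, coincides with Baker's decomposition \eqref{eq:error-estimate-11-pre} and is sound.
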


	\begin{proof}
Define, for $1\le m\le M-1$, time-dependent spatial mass average
	\begin{equation}
\overline{\e_h^{\mu,m+\hf}} := |\Omega|^{-1}\iprd{\e_h^{\mu,m+\hf}}{1}.
	\end{equation}
Using the Cauchy-Schwarz inequality, the Poincar\'{e} inequality, with the fact that 
	\[
\iprd{\sigma^{\phi,m+\hf}_1 + \sigma^{\phi,m+\hf}_2}{1} = 0,
	\]
and the local truncation error estimates~\eqref{eq:phi-truncation-1} and \eqref{eq:phi-truncation-2}, we get the following estimate:
	\begin{align}
& \hspace{-0.5in}\left|\iprd{\sigma^{\phi,m+\hf}_1 + \sigma^{\phi,m+\hf}_2}{\eh^{\mu,m+\hf}}\right| = \left|\iprd{\sigma^{\phi,m+\hf}_1 + \sigma^{\phi,m+\hf}_2}{\eh^{\mu,m+\hf}-\overline{\e_h^{\mu,m+\hf}}}\right|
	\nonumber
	\\
\le & \, \norm{\sigma^{\phi,m+\hf}_1 + \sigma^{\phi,m+\hf}_2}{} \cdot \norm{\eh^{\mu,m+\hf}-\overline{\e_h^{\mu,m+\hf}}}{}
	\nonumber
	\\
\le& \, C \norm{\sigma^{\phi,m+\hf}_1 + \sigma^{\phi,m+\hf}_2}{} \cdot \norm{\nabla \eh^{\mu,m+\hf}}{}
	\nonumber
	\\
\le& \, C\norm{\sigma^{\phi,m+\hf}_1}{}^2 + C\norm{\sigma^{\phi,m+\hf}_2}{}^2 + \frac{\varepsilon}{8} \norm{\nabla \eh^{\mu,m+\hf}}{}^2
	\nonumber
	\\
\le& \, C  \frac{h^{2q+2}}{\tau} \int_{t_m}^{t_{m+1}} \norm{\partial_s\phi(s)}{H^{q+1}}^2  ds
 + C\frac{\tau^3}{640} \int_{t_m}^{t_{m+1}}\norm{\partial_{sss}\phi(s)}{}^2 ds   + \frac{\varepsilon}{8} \norm{\nabla \eh^{\mu,m+\hf}}{}^2.
	\label{eq:error-estimate-1}
	\end{align}
Meanwhile, standard finite element approximation theory shows that
	\begin{equation*}
\norm{\nabla \eA^{\mu,m+\hf}}{} = \norm{\nabla \left(R_h \mu^{m+\hf} - \mu^{m+\hf}\right)}{} \leq C h^q\left| \mu^{m+\hf} \right|_{H^{q+1}} .
	\end{equation*}
Applying Lemma~\ref{lem:technical} and the last estimate, we have
	\begin{align}
\left|\iprd{\eA^{\mu,m+\hf}}{\dtau \eh^{\phi,m+\hf}}\right| \le& \, C \norm{\nabla\eA^{\mu,m+\hf}}{} \, \norm{\dtau \eh^{\phi,m+\hf}}{-1,h} 
	\nonumber
	\\
\le& \, C h^q\left| \mu^{m+\hf} \right|_{H^{q+1}} \norm{\dtau \eh^{\phi,m+\hf}}{-1,h}
	\nonumber
	\\
\le& \, Ch^{2q}\left| \mu^{m+\hf} \right|_{H^{q+1}}^2 + \frac{\alpha}{6} \norm{\dtau \eh^{\phi,m+\hf}}{-1,h}^2 .
	\label{eq:error-estimate-2}
	\end{align}
Using Lemma~\ref{lem:technical} and estimate \eqref{eq:phi-truncation-3}, we find
	\begin{align}
\varepsilon \, \aiprd{\sigma_3^{\phi,m+\hf}}{\dtau \eh^{\phi,m+\hf}} &= -\varepsilon \, \iprd{\Delta \sigma_3^{\phi,m+\hf}}{\dtau \eh^{\phi,m+\hf}}
	\nonumber
	\\
&\le C \norm{\nabla \Delta \sigma_3^{\phi,m+\hf}}{} \norm{\dtau \eh^{\phi,m+\hf}}{-1,h}
	\nonumber
	\\
&\le C \, \frac{\tau^3}{96} \int_{t_m}^{t_{m+1}} \norm{\nabla \Delta \partial_{ss} \phi(s)}{}^2 ds + \frac{\alpha}{6} \norm{\dtau \eh^{\phi,m+\hf}}{-1,h}^2.
	\nonumber
	\\
& \mbox{}
	\label{eq:error-estimate-3}
	\end{align}
	
Now, using Lemmas~\ref{lem:phi-sigma4-estimate} and \ref{lem:technical}, we obtain
	\begin{align}
\varepsilon^{-1}\left|\iprd{\sigma_4^{\phi,m+\hf}}{\dtau \eh^{\phi,m+\hf}}\right| \le& \, C \norm{\nabla \sigma_4^{\phi,m+\hf}}{} \, \norm{\dtau \eh^{\phi,m+\hf}}{-1,h}
	\nonumber
	\\
\le& \, C \norm{\nabla \sigma_4^{\phi,m+\hf}}{}^2 +  \frac{\alpha}{6}  \norm{\dtau \eh^{\phi,m+\hf}}{-1,h}^2
	\nonumber
	\\
\le& \, C\tau^3 \int_{t_m}^{t_{m+1}} \norm{\nabla \partial_{ss} \phi(s)}{}^2 ds 
	\nonumber
	\\
& + C \tau^3 \int_{t_m}^{t_{m+1}} \norm{\partial_{ss} \phi^2(s)}{H^1}^2 ds + \frac{\alpha}{6}  \norm{\dtau \eh^{\phi,m+\hf}}{-1,h}^2.
	\label{eq:error-estimate-4}
	\end{align}
Similarly, using Lemmas~\ref{lem:phi-sigma4-estimate} and \ref{lem:technical}, the relation $\e^{\phi,m+1} =\eA^{\phi,m+1}+\eh^{\phi,m+1}$, and a standard finite element error estimate, we arrive at
	\begin{align}
\varepsilon^{-1}\left|\iprd{\sigma_5^{\phi,m+\hf}}{\dtau \eh^{\phi,m+\hf}}\right| \le& \, C \norm{\nabla \sigma_5^{\phi,m+\hf}}{}^2 +  \frac{\alpha}{6}  \norm{\dtau \eh^{\phi,m+\hf}}{-1,h}^2
	\nonumber
	\\
\le& \, C \norm{\nabla \e^{\phi,m+1}}{}^2 + C \norm{\nabla \e^{\phi,m}}{}^2 + \frac{\alpha}{6}  \norm{\dtau \eh^{\phi,m+\hf}}{-1,h}^2
	\nonumber
	\\
\le& \, C \norm{\nabla \eA^{\phi,m+1}}{}^2 +C \norm{\nabla \eh^{\phi,m+1}}{}^2 + C \norm{\nabla \eA^{\phi,m}}{}^2 
	\nonumber
	\\
&+C \norm{\nabla \eh^{\phi,m}}{}^2 + \frac{\alpha}{6}  \norm{\dtau \eh^{\phi,m+\hf}}{-1,h}^2 
	\nonumber
	\\
\le& \, C h^{2q}\left| \phi^{m+1} \right|_{H^{q+1}}^2 + C \norm{\nabla \eh^{\phi,m+1}}{}^2 + C h^{2q}\left| \phi^{m} \right|_{H^{q+1}}^2 
	\nonumber
	\\
&+ C \norm{\nabla \eh^{\phi,m}}{}^2 + \frac{\alpha}{6}  \norm{\dtau \eh^{\phi,m+\hf}}{-1,h}^2.
	\label{eq:error-estimate-5}
	\end{align}
Applying Lemmas~\ref{lem:sigma6-estimate} and \ref{lem:technical}, the relation $\e^{\phi,m+1} =\eA^{\phi,m+1}+\eh^{\phi,m+1}$, and a standard finite element error estimate, we find that 
	\begin{align}
\varepsilon^{-1}\left|\iprd{\sigma_6^{\phi,m+\hf}}{\dtau \eh^{\phi,m+\hf}}\right| \le& \, C \norm{\nabla \sigma_6^{\phi,m+\hf}}{}^2 +  \frac{\alpha}{6}  \norm{\dtau \eh^{\phi,m+\hf}}{-1,h}^2
	\nonumber
	\\
\le & \, C \tau^3  \left( \int_{t_{m-1}}^{t_{m+1}}\norm{\nabla\partial_{ss}\phi(s)}{}^2 \, ds \right) + C\norm{\nabla \e_h^{\phi,m}}{}^2+C \norm{\nabla \e_h^{\phi,m-1}}{}^2 
	\nonumber
	\\
& + C h^{2q}\left| \phi^m \right|_{H^{q+1}}^2 + C  h^{2q}\left| \phi^{m-1} \right|_{H^{q+1}}^2 + \frac{\alpha}{6}  \norm{\dtau \eh^{\phi,m+\hf}}{-1,h}^2.
	\label{eq:error-estimate-6}
	\end{align}

The following inequality is a direct consequence of \eqref{eq:phi-truncation-5}: 
	\begin{equation}
\frac{\varepsilon\tau^2}{4} \, \aiprd{\ddtau\phi^m}{\dtau \eh^{\phi, m+\hf}}  \le  C  \frac{\tau^3}{3} \int_{t_{m-1}}^{t_m} \norm{\nabla \Delta \partial_{ss} \phi(s)}{}^2 ds + \frac{\alpha}{6} \norm{\dtau \eh^{\phi,m+\hf}}{-1,h}^2 .
	\label{eq:error-estimate-7}
	\end{equation}
	
Using Lemma \ref{lem:u-truncation-errors}, we also obtain		
	\begin{align}
\iprd{\sigma^{\bu,m+\hf}_1 + \sigma^{\bu,m+\hf}_2}{\bar{\e}_h^{\bu,m+\hf}} \le& \, C \norm{\sigma^{\bu,m+\hf}_1}{}^2 + C \norm{\sigma^{\bu,m+\hf}_2}{}^2 + \frac{\eta}{22 \gamma} \norm{\nabla \bar{\e}_h^{\bu,m+\hf}}{}^2 
	\nonumber
	\\
\le& \, \frac{\eta}{22 \gamma} \norm{\nabla \bar{\e}_h^{\bu,m+\hf}}{}^2 + C\frac{h^{2q+2}}{\tau} \int_{t_m}^{t_{m+1}} \norm{\partial_s\bu(s)}{H^{q+1}}^2  ds 
	\nonumber
	\\
&+ C \, \frac{\tau^3}{640} \int_{t_m}^{t_{m+1}}\norm{\partial_{sss}\bu(s)}{}^2 ds .
	\label{eq:error-estimate-8}
	\end{align}

Next, using~\eqref{eq:p-truncation-3},
	\begin{align}
-\frac{1}{\gamma} \cform{\bar{\e}_h^{\bu,m+\hf} }{\sigma_3^{p,m+\hf}} &\le \frac{\eta}{22 \gamma} \norm{\nabla \bar{\e}_h^{\bu,m+\hf}}{}^2 + C \norm{ \sigma^{p,m+\hf}_3}{}^2 
	\nonumber
	\\
&\le \frac{\eta}{22 \gamma} \norm{\nabla \bar{\e}_h^{\bu,m+\hf}}{}^2 + \frac{\tau^3}{96} \int_{t_m}^{t_{m+1}} \norm{\partial_{ss} p(s)}{}^2 ds,
	\end{align}
	
Now let's consider the convection trilinear terms. Adding and subtracting the appropriate terms, for all $1 \le m \le M-1$,
	\begin{align}
&\hspace{-0.5in}\left| - \bform{\phi^{m+\hf}}{\bu^{m+\hf}}{\eh^{\mu,m+\hf}} + \bform{\phitil^{m+\hf}}{\bbuh^{m+\hf}}{\eh^{\mu,m+\hf}} \right.
	\nonumber
	\\
& \hspace{-0.5in} \left.+ \bform{\phi^{m+\hf}}{\bar{\e}_h^{\bu,m+\hf}}{\mu^{m+\hf}} - \bform{\phitil^{m+\hf}}{\bar{\e}_h^{\bu,m+\hf}}{\muh^{m+\hf}} \right| 
	\nonumber
	\\
\le & \ \left| \bform{\sigma_6^{\phi, m+\hf}}{\bu^{m+\hf}}{\eh^{\mu, m+\hf}- \overline{\e_h^{\mu,m+\hf}}} \right| + \left|\bform{\sigma_6^{\phi, m+\hf}}{\bar{\e}_h^{\bu,m+\hf}}{\mu^{m+\hf}} \right|
	\nonumber
	\\
&  + \left|\bform{\phitil^{m+\hf}}{\bar{\e}_a^{\bu,m+\hf} - \sigma_3^{\bu,m+\hf}}{\eh^{\mu,m+\hf}-\overline{\e_h^{\mu,m+\hf}}} \right| + \left| \bform{\phitil^{m+\hf}}{\bar{\e}_h^{\bu,m+\hf}}{\eA^{\mu,m+\hf}}\right|
	\nonumber
	\\
\le & \ \norm{\nabla \sigma_6^{\phi, m+\hf}}{} \norm{\bu^{m+\hf}}{L^4} \norm{\eh^{\mu, m+\hf}- \overline{\e_h^{\mu,m+\hf}}}{L^4} + \norm{ \nabla \sigma_6^{\phi,m+\hf}}{} \norm{\bar{\e}_h^{\bu,m+\hf}}{L^4} \norm{\mu^{m+\hf}}{L^4}
	\nonumber
	\\
& + \norm{\nabla \phitil^{m+\hf}}{} \norm{\bar{\e}_a^{\bu,m+\hf} - \sigma_3^{\bu,m+\hf}}{L^4} \norm{\eh^{\mu,m+\hf}- \overline{\e_h^{\mu,m+\hf}}}{L^4}
	\nonumber
	\\
& + \norm{\nabla \phitil^{m+\hf}}{} \norm{\bar{\e}_h^{\bu,m+\hf}}{L^4} \norm{\eA^{\mu,m+\hf}}{L^4}
	\nonumber
	\\
\le & \  \frac{\varepsilon}{8} \norm{\nabla \eh^{\mu,m+\hf}}{}^2 + \frac{\eta}{22 \gamma} \norm{\nabla \bar{\e}_h^{\bu,m+\hf}}{}^2 + C \norm{\nabla \eh^{\phi,m+1}}{}^2 + C \norm{\nabla \eh^{\phi,m}}{}^2 
	\nonumber
	\\
& +  h^{2q}\left| \phi^{m} \right|_{H^{q+1}}^2 + h^{2q}\left| \phi^{m-1} \right|_{H^{q+1}}^2  + h^{2q}\left| \bu^{m+1} \right|_{H^{q+1}}^2  +  h^{2q}\left| \bu^{m} \right|_{H^{q+1}}^2 + h^{2q}\left| \bu^{m-1} \right|_{H^{q+1}}^2
	\nonumber
	\\
&   + h^{2q}\left| \mu^{m+\hf} \right|_{H^{q+1}}^2 + C \tau^3 \int_{t_{m-1}}^{t_{m+1}} \norm{\nabla \partial_{ss} \phi(s)}{}^2 ds  + C \tau^3 \int_{t_m}^{t_{m+1}} \norm{\nabla \partial_{ss} \bu(s)}{}^2 ds,
	\label{eq:error-estimate-10}
	\end{align}
where we have used Lemmas \ref{lem-improved-a-priori-stabilities}, \ref{lem:phi-truncation-errors}, \ref{lem:sigma6-estimate}.

Additionally, after adding and subtracting the appropriate terms, for $1 \le m \le M-1$, we have
	\begin{align}
& \hspace{-0.5in}\frac{1}{\gamma} \left|-\Bform{\bu^{m+\hf}}{\bu^{m+\hf}}{\bar{\e}_h^{\bu,m+\hf}} + \Bform{\butil^{m+\hf}}{\bbuh^{m+\hf}}{\bar{\e}_h^{\bu,m+\hf}} \right| 
	\nonumber
	\\
= & \ \frac{1}{\gamma} \left|\Bform{\sigma_3^{\bu,m+\hf}}{\bu^{m+\hf}}{\bar{\e}_h^{\bu,m+\hf} } - \Bform{\frac{\tau^2}{2} \ddtau  \bu^m}{\bu^{m+\hf}}{\bar{\e}_h^{\bu,m+\hf}} \right.
	\nonumber
	\\
& +\Bform{\tilde{\bu}^{m+\hf}}{\sigma_3^{\bu,m+\hf}}{\bar{\e}_h^{\bu,m+\hf}}   - \Bform{\tilde{\bu}^{m+\hf}}{\bar{\e}_a^{\bu,m+\hf}}{\bar{\e}_h^{\bu,m+\hf}} 
	\nonumber
	\\
&   + \Bform{\tilde{\e}_a^{\bu,m+\hf}}{\bar{\e}_a^{\bu,m+\hf}}{\bar{\e}_h^{\bu,m+\hf}}  - \Bform{\tilde{\e}_a^{\bu,m+\hf}}{\bar{\bu}^{m+\hf}}{\bar{\e}_h^{\bu,m+\hf}} 
	\nonumber
	\\
&  + \Bform{\tilde{\e}_h^{\bu,m+\hf}}{\bar{\e}_a^{\bu,m+\hf}}{\bar{\e}_h^{\bu,m+\hf} } - \Bform{\tilde{\e}_h^{\bu,m+\hf}}{\bar{\bu}^{m+\hf}}{\bar{\e}_h^{\bu,m+\hf}} 
	\nonumber
	\\
&  \left. - \Bform{\butil^{m+\hf}}{\bar{\e}_h^{\bu,m+\hf}}{\bar{\e}_h^{\bu,m+\hf}} \right|.
	\label{eq:error-estimate-11-pre}
	\end{align}
This is the same basic decomposition considered in Baker's paper~\cite{baker76}. We immediately see that the last term vanishes by anti-symmetry in the last two terms of $B$: $\Bform{\butil^{m+\hf}}{\bar{\e}_h^{\bu,m+\hf}}{\bar{\e}_h^{\bu,m+\hf}}=0$. We examine the other eight terms individually. Using estimate \eqref{lem:technical-Bform-1} of Lemma \ref{lem:technical-Bform},
	\begin{align}
\frac{1}{\gamma}\left|\Bform{\sigma_3^{\bu,m+\hf}}{\bu^{m+\hf}}{\bar{\e}_h^{\bu,m+\hf}} \right|  \le & \ C \norm{\nabla \sigma_3^{\bu,m+\hf}}{} \norm{\nabla \bu^{m+\hf}}{} \norm{\nabla \bar{\e}_h^{\bu,m+\hf}}{} 
	\nonumber
	\\
\le & \ \frac{\eta}{22 \gamma} \norm{\nabla \bar{\e}_h^{\bu,m+\hf}}{}^2  +  C \frac{\tau^3}{96} \int_{t_m}^{t_{m+1}} \norm{\nabla \partial_{ss} \bu(s)}{}^2 \, ds ;
	\label{eq:error-estimate-11.1}
		\end{align} 
		\begin{align}
\frac{1}{\gamma}\left|\Bform{\hf \tau^2 \ddtau  \bu^m}{\bu^{m+\hf}}{\bar{\e}_h^{\bu,m+\hf}}\right| \le & \ C \norm{\nabla \tau^2 \ddtau  \bu^m}{} \norm{\nabla \bu^{m+\hf}}{} \norm{\nabla \bar{\e}_h^{\bu,m+\hf}}{} 
	\nonumber
	\\
 \le & \ \frac{\eta}{22 \gamma} \norm{\nabla \bar{\e}_h^{\bu,m+\hf}}{}^2 +  \frac{\tau^3}{3} \int_{t_{m-1}}^{t_{m+1}} \norm{\nabla \partial_{ss} \phi(s)}{}^2 ds ;
	\label{eq:error-estimate-11.2}
	\end{align}
	\begin{align}
\frac{1}{\gamma}\left| \Bform{\tilde{\bu}^{m+\hf} }{\sigma_3^{\bu,m+\hf}}{\bar{\e}_h^{\bu,m+\hf} }\right| \le &  C \norm{\nabla \tilde{\bu}^{m+\hf}}{} \norm{\nabla \sigma_3^{\bu,m+\hf}}{} \norm{\nabla \bar{\e}_h^{\bu,m+\hf}}{} 
	\nonumber
	\\
\le &  \frac{\eta}{22 \gamma} \norm{\nabla \bar{\e}_h^{\bu,m+\hf}}{}^2 + C \norm{\nabla \sigma_3^{\bu,m+\hf}}{} 
	\nonumber
	\\
 \le &  \frac{\eta}{22 \gamma} \norm{\nabla \bar{\e}_h^{\bu,m+\hf}}{}^2 +  C \frac{\tau^3}{96} \int_{t_m}^{t_{m+1}} \norm{\nabla \partial_{ss} \bu(s)}{}^2 \, ds ;
	\label{eq:error-estimate-11.3}
	\end{align}
	\begin{align}
\frac{1}{\gamma}\left| \Bform{\tilde{\bu}^{m+\hf} }{\bar{\e}_a^{\bu,m+\hf}}{\bar{\e}_h^{\bu,m+\hf} } \right|  \le &  C  \norm{\nabla \tilde{\bu}^{m+\hf}}{} \norm{\nabla \bar{\e}_a^{\bu,m+\hf}}{} \norm{\nabla \bar{\e}_h^{\bu,m+\hf}}{}
	\nonumber
	\\
\le & \ \frac{\eta}{22 \gamma} \norm{\nabla \bar{\e}_h^{\bu,m+\hf}}{}^2 + C h^{2q} \left(\left| \bu^{m+1} \right|^2_{H^{q+1}} + \left| \bu^{m} \right|^2_{H^{q+1}}\right)
	\nonumber
	\\
& + C h^{2q} \left(\left| p^{m+1} \right|^2_{H^{q}} + \left| p^{m} \right|^2_{H^{q}} \right);
	\label{eq:error-estimate-11.4}
	\end{align}
	\begin{align}
\frac{1}{\gamma}\left| \Bform{\tilde{\e}_a^{\bu,m+\hf}}{\bar{\bu}^{m+\hf}}{\bar{\e}_h^{\bu,m+\hf}} \right| \le & \ \norm{\nabla\tilde{\e}_a^{\bu,m+\hf}}{} \norm{\nabla \bar{\bu}^{m+\hf}}{} \norm{\nabla \bar{\e}_h^{\bu,m+\hf}}{} 
	\nonumber
	\\
\le & \ \frac{\eta}{22 \gamma} \norm{\nabla \bar{\e}_h^{\bu,m+\hf}}{}^2 + C h^{2q} \left(\left| \bu^{m} \right|^2_{H^{q+1}} + \left| \bu^{m-1} \right|^2_{H^{q+1}}\right)
	\nonumber
	\\
& + C h^{2q} \left( \left| p^{m} \right|^2_{H^{q}} + \left| p^{m-1} \right|^2_{H^{q}} \right).
	\label{eq:error-estimate-11.6}
	\end{align}
Using the stability estimate~\eqref{eq:max-stability-velecity-approx-error},
	\begin{align}
\frac{1}{\gamma}\left| \Bform{\tilde{\e}_a^{\bu,m+\hf}}{\bar{\e}_a^{\bu,m+\hf}}{\bar{\e}_h^{\bu,m+\hf}} \right| \le & \ \norm{\tilde{\e}_a^{\bu,m+\hf}}{L^\infty} \norm{\nabla \bar{\e}_a^{\bu,m+\hf}}{} \norm{\nabla \bar{\e}_h^{\bu,m+\hf}}{} 
	\nonumber
	\\
 \le & \ C \norm{\nabla \bar{\e}_a^{\bu,m+\hf}}{} \norm{\nabla \bar{\e}_h^{\bu,m+\hf}}{} 
	\nonumber
	\\
\le & \ \frac{\eta}{22 \gamma} \norm{\nabla \bar{\e}_h^{\bu,m+\hf}}{}^2 + C h^{2q} \left(\left| \bu^{m+1} \right|^2_{H^{q+1}} + \left| \bu^{m} \right|^2_{H^{q+1}}\right)
	\nonumber
	\\
& + C h^{2q} \left(\left| p^{m+1} \right|^2_{H^{q}} + \left| p^{m} \right|^2_{H^{q}} \right);
	\label{eq:error-estimate-11.5}
	\end{align}
and, with the estimate~\eqref{lem:technical-Bform-5}, the inverse-Sobolev  inequality~\eqref{sobolev-inverse-estimate}, the Poincar\'{e} inequality, and  estimate~\eqref{eq:max-stability-velecity-approx-error} again, 
	\begin{align}
\frac{1}{\gamma}\left| \Bform{\tilde{\e}_h^{\bu,m+\hf}}{\bar{\e}_a^{\bu,m+\hf}}{\bar{\e}_h^{\bu,m+\hf} }  \right| \le & \  C \norm{\tilde{\e}_h^{\bu,m+\hf}}{} \norm{\nabla\bar{\e}_a^{\bu,m+\hf}}{} \norm{\bar{\e}_h^{\bu,m+\hf}}{L^\infty} 
	\nonumber
	\\
& + C \norm{\tilde{\e}_h^{\bu,m+\hf}}{} \norm{\nabla \bar{\e}_h^{\bu,m+\hf}}{} \norm{\bar{\e}_a^{\bu,m+\hf}}{L^\infty}
	\nonumber
	\\
\le & \ C \norm{\tilde{\e}_h^{\bu,m+\hf}}{} h^q (|\bu|_{H^{q+1}} + |p|_{H^{q}}) h^{\frac{1-d}{2}} \norm{\nabla \bar{\e}_h^{\bu,m+\hf}}{} 
	\nonumber
	\\
& + C \norm{\tilde{\e}_h^{\bu,m+\hf}}{} \norm{\nabla \bar{\e}_h^{\bu,m+\hf}}{} 
	\nonumber
	\\
\le & \ \frac{\eta}{22 \gamma} \norm{\nabla \bar{\e}_h^{\bu,m+\hf}}{}^2 + C \norm{\eh^{\bu,m}}{}^2 + C \norm{\eh^{\bu,m-1}}{}^2;
	\label{eq:error-estimate-11.7}
	\end{align}
	\begin{align}
\frac{1}{\gamma}\left| \Bform{\tilde{\e}_h^{\bu,m+\hf}}{\bar{\bu}^{m+\hf}}{\bar{\e}_h^{\bu,m+\hf}} \right| \le & \ \left(\norm{\bar{\bu}^{m+\hf}}{L^\infty} + \norm{\nabla\bar{\bu}^{m+\hf}}{L^4} \right)\norm{\tilde{\e}_h^{\bu,m+\hf}}{}  \norm{\nabla \bar{\e}_h^{\bu,m+\hf}}{} 
	\nonumber
	\\
\le & \ \frac{\eta}{22 \gamma} \norm{\nabla \bar{\e}_h^{\bu,m+\hf}}{}^2 + C \norm{\eh^{\bu,m}}{}^2 + C \norm{\eh^{\bu,m-1}}{}^2.
	\label{eq:error-estimate-11.8}
	\end{align}

Combining the estimates \eqref{eq:error-estimate-1} -- \eqref{eq:error-estimate-11.8} with the error equation \eqref{eq:error-eq}, the result follows.
	\end{proof}

	\begin{lem}
	\label{lem:dtau-phi-error-estimate}
Suppose that $(\phi, \mu, \bu, p)$ is a weak solution to \eqref{eq:weak-error-a} -- \eqref{eq:weak-error-b}, with the additional regularities in Assumption~\ref{asmp:initial-stability-3}.  Then, for any $h, \tau >0$, there exists a constant $C>0$, independent of $h$ and $\tau$, such that, for $1\le m\le M-1$,
	\begin{align}
\norm{\dtau \eh^{\phi,m+\hf}}{-1,h}^2 \le&\, 2 \,  \varepsilon^2 \norm{\nabla \eh^{\mu,m+\hf}}{}^2 +C \norm{\nabla \eh^{\phi,m}}{}^2 + C \norm{\nabla \eh^{\phi,m-1}}{}^2 
	\nonumber
	\\
&+ 5 \, C_2^2 \norm{\nabla \bar{\e}_h^{\bu,m+\hf}}{}^2 + C\mathcal{R}^{m+\hf},
 	\label{eq:dtau-phi-error-estimate}
	\end{align}
where $C_2 = C_0^2 C_1$, $C_0$ is the $H^1(\Omega) \hookrightarrow L^4(\Omega)$ Sobolev embedding constant, $C_1$ is a bound for $\max\limits_{0\le t \le T} \norm{\nabla \phih^m}{}$, and $\mathcal{R}^{m+\hf}$ is the consistency term given in \eqref{eq:consistency}.
	\end{lem}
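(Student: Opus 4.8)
The plan is to exploit the discrete negative norm directly through the phase-field error equation \eqref{eq:error-a}. First I would record that $\dtau \eh^{\phi,m+\hf}$ has zero spatial mean: since the Ritz projection preserves the mean and both the PDE and the scheme conserve mass, $\iprd{\eh^{\phi,k}}{1}=\iprd{\phi^k}{1}-\iprd{\phih^k}{1}=0$ for every $k$, whence $\iprd{\dtau\eh^{\phi,m+\hf}}{1}=0$. Consequently $\dtau\eh^{\phi,m+\hf}\in\Soh$ and its $\norm{\cdot}{-1,h}$ norm is well defined; letting $\Psi\in\Soh$ be the discrete Green's function solving $\aiprd{\Psi}{\xi}=\iprd{\dtau\eh^{\phi,m+\hf}}{\xi}$ for all $\xi\in S_h$, we have $\norm{\nabla\Psi}{}=\norm{\dtau\eh^{\phi,m+\hf}}{-1,h}$ and $\iprd{\dtau\eh^{\phi,m+\hf}}{\Psi}=\norm{\dtau\eh^{\phi,m+\hf}}{-1,h}^2$. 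Setting $\nu=\Psi$ in \eqref{eq:error-a} therefore expresses $\norm{\dtau\eh^{\phi,m+\hf}}{-1,h}^2$ as $-\varepsilon\aiprd{\eh^{\mu,m+\hf}}{\Psi}$ plus the consistency pairing $\iprd{\sigma^{\phi,m+\hf}_1+\sigma^{\phi,m+\hf}_2}{\Psi}$ plus the convection defect $-\bform{\phi^{m+\hf}}{\bu^{m+\hf}}{\Psi}+\bform{\phitil^{m+\hf}}{\bbuh^{m+\hf}}{\Psi}$; the whole task is to bound each of these by a constant times $\norm{\nabla\Psi}{}$ and then cancel one factor of $\norm{\nabla\Psi}{}$.

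The two non-convective terms are immediate. Cauchy--Schwarz gives $-\varepsilon\aiprd{\eh^{\mu,m+\hf}}{\Psi}\le\varepsilon\norm{\nabla\eh^{\mu,m+\hf}}{}\norm{\nabla\Psi}{}$, which is the source of the leading $\varepsilon^2\norm{\nabla\eh^{\mu,m+\hf}}{}^2$ term. For the consistency pairing I would use that $\sigma^{\phi,m+\hf}_1+\sigma^{\phi,m+\hf}_2$ has zero mean, the Poincar\'e inequality on the mean-zero $\Psi$, and the truncation bounds \eqref{eq:phi-truncation-1}--\eqref{eq:phi-truncation-2} of Lemma~\ref{lem:phi-truncation-errors}, to get a bound of the form $C(\mathcal{R}^{m+\hf})^{1/2}\norm{\nabla\Psi}{}$. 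The heart of the proof is the convection defect, which I would split via linearity of $b$ in its first two slots as $\bform{\phitil^{m+\hf}}{\bbuh^{m+\hf}-\bu^{m+\hf}}{\Psi}+\bform{\phitil^{m+\hf}-\phi^{m+\hf}}{\bu^{m+\hf}}{\Psi}$. Writing $\bbuh^{m+\hf}-\bu^{m+\hf}=\sigma_3^{\bu,m+\hf}-\bar{\e}_a^{\bu,m+\hf}-\bar{\e}_h^{\bu,m+\hf}$ and $\phitil^{m+\hf}-\phi^{m+\hf}=-\sigma_6^{\phi,m+\hf}$ isolates the essential piece $-\bform{\phitil^{m+\hf}}{\bar{\e}_h^{\bu,m+\hf}}{\Psi}=-\iprd{\nabla\phitil^{m+\hf}\cdot\bar{\e}_h^{\bu,m+\hf}}{\Psi}$, which by H\"older and the $H^1\hookrightarrow L^4$ embedding (constant $C_0$) applied to both $\bar{\e}_h^{\bu,m+\hf}\in{\bf H}^1_0$ and the mean-zero $\Psi$ is controlled by $C_0^2\norm{\nabla\phitil^{m+\hf}}{}\norm{\nabla\bar{\e}_h^{\bu,m+\hf}}{}\norm{\nabla\Psi}{}$; since $\norm{\nabla\phitil^{m+\hf}}{}$ is bounded by $C_1$ up to the extrapolation weights, this yields the $C_2=C_0^2C_1$ factor that produces the $5C_2^2\norm{\nabla\bar{\e}_h^{\bu,m+\hf}}{}^2$ term.

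The remaining convection pieces are lower order. $\bform{\phitil^{m+\hf}}{\sigma_3^{\bu,m+\hf}-\bar{\e}_a^{\bu,m+\hf}}{\Psi}$ is bounded the same way, with $\norm{\sigma_3^{\bu,m+\hf}}{L^4}$ and $\norm{\bar{\e}_a^{\bu,m+\hf}}{L^4}$ estimated by the truncation bound \eqref{eq:u-truncation-3b} and standard Stokes-projection approximation (feeding $h^{2q}$ and $\tau^3$ contributions into $\mathcal{R}^{m+\hf}$); while $\bform{\sigma_6^{\phi,m+\hf}}{\bu^{m+\hf}}{\Psi}\le\norm{\nabla\sigma_6^{\phi,m+\hf}}{}\norm{\bu^{m+\hf}}{L^4}\norm{\Psi}{L^4}$ is handled by Lemma~\ref{lem:sigma6-estimate} together with $\e^{\phi,k}=\eA^{\phi,k}+\eh^{\phi,k}$, so that $\norm{\nabla\sigma_6^{\phi,m+\hf}}{}^2$ contributes $C\norm{\nabla\eh^{\phi,m}}{}^2+C\norm{\nabla\eh^{\phi,m-1}}{}^2$ plus $\mathcal{R}^{m+\hf}$. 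Collecting all contributions, cancelling the common factor $\norm{\nabla\Psi}{}=\norm{\dtau\eh^{\phi,m+\hf}}{-1,h}$, and squaring the resulting linear bound with a Young's inequality in which the coefficients on $\varepsilon\norm{\nabla\eh^{\mu,m+\hf}}{}$ and $C_2\norm{\nabla\bar{\e}_h^{\bu,m+\hf}}{}$ are kept explicit (they must later be absorbed into the coercive $\frac{3\varepsilon}{4}\norm{\nabla\eh^{\mu,m+\hf}}{}^2$ and $\frac{\eta}{2\gamma}\norm{\nabla\bar{\e}_h^{\bu,m+\hf}}{}^2$ terms of Lemma~\ref{lem:error-rhs-control}), gives the stated constants $2\varepsilon^2$ and $5C_2^2$. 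I expect the main obstacle to be precisely this bookkeeping in the convection term: arranging the split so that the velocity error enters only through $\bar{\e}_h^{\bu,m+\hf}$ with the sharp constant, while every factor multiplying $\Psi$ is reduced to $\norm{\nabla\Psi}{}$ via the mean-zero Poincar\'e inequality, and ensuring that the $\sigma_3^{\bu,m+\hf}$, $\bar{\e}_a^{\bu,m+\hf}$, and $\sigma_6^{\phi,m+\hf}$ remainders collapse cleanly into $\mathcal{R}^{m+\hf}$ and the $\eh^{\phi}$ gradient terms.
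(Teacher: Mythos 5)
Your proposal is correct and follows essentially the same route as the paper: your discrete Green's function $\Psi$ is exactly the paper's $\mathsf{T}_h\left(\dtau \eh^{\phi,m+\hf}\right)$ tested in \eqref{eq:error-a}, and your splitting of the convection defect into the $\sigma_6^{\phi,m+\hf}$, $\sigma_3^{\bu,m+\hf}$, $\bar{\e}_a^{\bu,m+\hf}$, and $\bar{\e}_h^{\bu,m+\hf}$ pieces, with the key term bounded via the $H^1 \hookrightarrow L^4$ embedding to produce $C_2 = C_0^2 C_1$, is the paper's decomposition verbatim. The only cosmetic difference is that you cancel one factor of $\norm{\nabla \Psi}{}$ before squaring, whereas the paper applies Young's inequality term by term and absorbs $\hf \norm{\dtau \eh^{\phi,m+\hf}}{-1,h}^2$ into the left-hand side; these are equivalent.
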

	
	\begin{proof}
Define $\mathsf{T}_h: \Soh \rightarrow \Soh$ via the variational problem: given $\zeta \in \Soh$, find $\xi \in \Soh$ such that $\aiprd{\mathsf{T}_h (\zeta)}{\xi} = \iprd{\zeta}{\xi}$ for all $\xi \in \Soh$. Then, setting $\nu = \mathsf{T}_h\left(\dtau \eh^{\phi,m+\frac12} \right)$ in \eqref{eq:error-a} and combining, we have
	\begin{align}
\norm{\dtau \eh^{\phi,m+\hf}}{-1,h}^2  =& \, - \varepsilon\, \aiprd{\eh^{\mu,m+\hf}}{\mathsf{T}_h\left(\dtau \eh^{\phi,m+\hf} \right)} + \iprd{\sigma^{m+\hf}_1 + \sigma^{m+\hf}_2}{\mathsf{T}_h\left(\dtau \eh^{\phi,m+\hf} \right)} 
	\nonumber
	\\
&- \bform{\phi^{m+\hf}}{\bu^{m+\hf}}{\mathsf{T}_h\left(\dtau \eh^{\phi,m+\hf} \right)} + \bform{\phitil^{m+\hf}}{\bbuh^{m+\hf}}{\mathsf{T}_h\left(\dtau \eh^{\phi,m+\hf} \right)} 
	\nonumber
	\\
=& \, - \varepsilon\, \aiprd{\eh^{\mu,m+\hf}}{\mathsf{T}_h\left(\dtau \eh^{\phi,m+\hf} \right)} + \iprd{\sigma^{m+\hf}_1 + \sigma^{m+\hf}_2}{\mathsf{T}_h\left(\dtau \eh^{\phi,m+\hf} \right)} 
	\nonumber
	\\
&- \bform{\sigma_6^{\phi,m+\hf}}{\bu^{m+\hf}}{\mathsf{T}_h\left(\dtau \eh^{\phi,m+\hf} \right)} 
	\nonumber
	\\
&- \bform{\phitil^{m+\hf}}{\bar{\e}^{\bu,m+\hf} - \sigma_3^{\bu,m+\hf}}{\mathsf{T}_h\left(\dtau \eh^{\phi,m+\hf} \right)} 
	\nonumber
	\\
\le& \, \varepsilon \norm{\nabla \eh^{\mu,m+\hf}}{} \norm{\dtau \eh^{\phi,m+\hf}}{-1,h} + \norm{\sigma^{m+\hf}_1 + \sigma^{m+\hf}_2}{} \norm{\mathsf{T}_h\left(\dtau \eh^{\phi,m+\hf} \right)}{}
	\nonumber
	\\
&+  \norm{\sigma_6^{\phi,m+\hf}}{} \norm{\bu^{m+\hf}}{L^4} \norm{\mathsf{T}_h\left(\dtau \eh^{\phi,m+\hf} \right)}{L^4} 
	\nonumber
	\\
&+ \norm{\phitil^{m+\hf}}{L^\infty} \norm{\bar{\e}^{\bu,m+\hf} - \sigma_3^{\bu,m+\hf}}{} \norm{\mathsf{T}_h\left(\dtau \eh^{\phi,m+\hf} \right)}{}
	\nonumber
	\\
\le& \, \varepsilon^2 \norm{\nabla \eh^{\mu,m+\hf}}{}^2 + \frac{1}{4} \norm{\dtau \eh^{\phi,m+\hf}}{-1,h}^2 + C \norm{\sigma^{m+\hf}_2 + \sigma^{m+\hf}_1}{}^2 
	\nonumber
	\\
&+ C \norm{\sigma^{m+\hf}_6}{} \norm{\nabla \mathsf{T}_h\left(\dtau \eh^{\phi,m+\hf} \right)}{} + C_2 \norm{\nabla \bar{\e}_h^{\bu,m+\hf}}{} \norm{\nabla \mathsf{T}_h\left(\dtau \eh^{\phi,m+\hf} \right)}{}
	\nonumber
	\\
&+ C \norm{\nabla \left(\bar{\e}_a^{\bu,m+\hf} - \sigma_3^{\bu,m+\hf}\right)}{} \norm{\nabla \mathsf{T}_h\left(\dtau \eh^{\phi,m+\hf} \right)}{}
	\nonumber
	\\
\le& \, \varepsilon^2 \norm{\nabla \eh^{\mu,m+\hf}}{}^2 + \hf \norm{\dtau \eh^{\phi,m+\hf}}{-1,h}^2 +C \norm{\nabla \eh^{\phi,m}}{}^2 + C \norm{\nabla \eh^{\phi,m-1}}{}^2 
	\nonumber
	\\
&+ \frac{5 C_2^2}{2} \norm{\nabla \bar{\e}_h^{\bu,m+\hf}}{}^2  + C\mathcal{R}^{m+\hf} ,
	\end{align}
for $1 \le m \le M-1$, where we have used Lemmas~\ref{lem:phi-truncation-errors}, \ref{lem:u-truncation-errors}, and \ref{lem:sigma6-estimate}. The result now follows.
	\end{proof}

	\begin{lem}
	\label{lem:error-rhs-eh-control}
Suppose that $(\phi, \mu, \bu, p)$ is a weak solution to \eqref{eq:weak-error-a} -- \eqref{eq:weak-error-d}, with the additional regularities described in Assumption~\ref{asmp:initial-stability-3}.  Then, for any $h$, $\tau >0$, there exists a constant $C>0$, independent of $h$ and $\tau$, but possibly dependent upon $T$, such that, for any $1\le m\le M-1$,
	\begin{align}
&\frac{\varepsilon}{2\tau} \left( \norm{\nabla\eh^{\phi,m+1}}{}^2 - \norm{\nabla \eh^{\phi,m}}{}^2\right) + \frac{1}{2 \tau \gamma} \left(\norm{ \eh^{\bu,m+1}}{}^2 - \norm{\eh^{\bu,m}}{}^2\right) + \frac{ \varepsilon\tau^2}{4} \, \aiprd{\ddtau\eh^{\phi,m}}{\dtau \eh^{\phi, m+\hf}} 
	\nonumber
	\\
&\qquad+ \frac{\varepsilon}{2} \norm{\nabla \eh^{\mu,m+\hf}}{}^2 + \frac{\eta}{4\gamma} \, \norm{ \nabla \bar{\e}_h^{\bu,m+\hf}}{}^2 \le \, C \norm{\nabla \eh^{\phi,m+1}}{}^2 + C \norm{\nabla \eh^{\phi,m}}{}^2  
	\nonumber
	\\
&\qquad +  C \norm{\nabla \eh^{\phi,m-1}}{}^2 + C \norm{\eh^{\bu,m}}{}^2 + C \norm{\eh^{\bu,m-1}}{}^2 + C \mathcal{R}^{m+\hf} .
	\end{align}
	\end{lem}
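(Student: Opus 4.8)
The plan is to derive the stated estimate directly from the two preceding lemmas. Lemma~\ref{lem:error-rhs-control} already bounds the left-hand side, except that its right-hand side still carries the term $\alpha \norm{\dtau \eh^{\phi,m+\hf}}{-1,h}^2$, and its coefficients in front of $\norm{\nabla \eh^{\mu,m+\hf}}{}^2$ and $\norm{\nabla \bar{\e}_h^{\bu,m+\hf}}{}^2$ are slightly larger ($\frac{3\varepsilon}{4}$ and $\frac{\eta}{2\gamma}$) than the ones required here ($\frac{\varepsilon}{2}$ and $\frac{\eta}{4\gamma}$). The idea is to spend this surplus dissipation to absorb the $\norm{\dtau \eh^{\phi,m+\hf}}{-1,h}^2$ term after estimating it via Lemma~\ref{lem:dtau-phi-error-estimate}; since Lemma~\ref{lem:error-rhs-control} is valid for every $\alpha > 0$, I may fix $\alpha$ to a convenient small constant at the end.

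First I would insert the bound
\[
\norm{\dtau \eh^{\phi,m+\hf}}{-1,h}^2 \le 2\varepsilon^2 \norm{\nabla \eh^{\mu,m+\hf}}{}^2 + C\norm{\nabla \eh^{\phi,m}}{}^2 + C\norm{\nabla \eh^{\phi,m-1}}{}^2 + 5 C_2^2 \norm{\nabla \bar{\e}_h^{\bu,m+\hf}}{}^2 + C\mathcal{R}^{m+\hf}
\]
from Lemma~\ref{lem:dtau-phi-error-estimate} into the $\alpha \norm{\dtau \eh^{\phi,m+\hf}}{-1,h}^2$ term appearing in Lemma~\ref{lem:error-rhs-control}. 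This produces two terms that must be absorbed on the left, namely $2\alpha\varepsilon^2 \norm{\nabla \eh^{\mu,m+\hf}}{}^2$ and $5\alpha C_2^2 \norm{\nabla \bar{\e}_h^{\bu,m+\hf}}{}^2$; every other term it produces is a multiple of $\norm{\nabla \eh^{\phi,m}}{}^2$, $\norm{\nabla \eh^{\phi,m-1}}{}^2$, or $\mathcal{R}^{m+\hf}$, which are already present on the right-hand side and may be merged into the generic constant $C$.

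The key step is the absorption. Moving the two surplus terms onto the left produces factors $\frac{3\varepsilon}{4} - 2\alpha\varepsilon^2$ and $\frac{\eta}{2\gamma} - 5\alpha C_2^2$ in front of $\norm{\nabla \eh^{\mu,m+\hf}}{}^2$ and $\norm{\nabla \bar{\e}_h^{\bu,m+\hf}}{}^2$, respectively. These remain at least $\frac{\varepsilon}{2}$ and $\frac{\eta}{4\gamma}$ precisely when $2\alpha\varepsilon^2 \le \frac{\varepsilon}{4}$ and $5\alpha C_2^2 \le \frac{\eta}{4\gamma}$, so the single choice
\[
\alpha := \min\left\{ \frac{1}{8\varepsilon}, \ \frac{\eta}{20\gamma C_2^2} \right\} > 0
\]
satisfies both constraints at once, and the desired inequality follows.

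The only real point to be careful about is that one constant $\alpha$ must simultaneously dominate both the chemical-potential and the velocity gradient contributions, which ties $\alpha$ to $C_2 = C_0^2 C_1$. Since $C_1$ is the uniform bound on $\max_m \norm{\nabla\phih^m}{}$ supplied by Lemma~\ref{lem-a-priori-stability-trivial}, and is independent of $h$, $\tau$, and $T$, the constant $\alpha$ is likewise a fixed positive number independent of the discretization. Consequently the constant $C$ in the final estimate inherits exactly the dependencies claimed in the statement, completing the argument.
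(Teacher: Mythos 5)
Your proof is correct and follows exactly the paper's own (one-line) argument: combine Lemma~\ref{lem:error-rhs-control} with Lemma~\ref{lem:dtau-phi-error-estimate} and choose $\alpha$ small enough that the terms $2\alpha\varepsilon^2\norm{\nabla\eh^{\mu,m+\hf}}{}^2$ and $5\alpha C_2^2\norm{\nabla\bar{\e}_h^{\bu,m+\hf}}{}^2$ are absorbed by the surplus dissipation $\frac{\varepsilon}{4}$ and $\frac{\eta}{4\gamma}$ on the left. Your explicit choice $\alpha=\min\left\{\frac{1}{8\varepsilon},\,\frac{\eta}{20\gamma C_2^2}\right\}$ and the accompanying constant-tracking are exactly what the paper leaves implicit.
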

	\begin{proof}
This follows upon combining the last two lemmas and choosing $\alpha$ in \eqref{eq:right-hand-side-estimate} appropriately.
	\end{proof}

Using the last lemma, we are ready to show the main convergence result for our second-order splitting scheme.
	\begin{thm}
	\label{thm:soch-error-estimate}
Suppose $(\phi, \mu, \bu, p)$ is a weak solution to \eqref{eq:weak-error-a} -- \eqref{eq:weak-error-d}, with the additional regularities described in Assumption~\ref{asmp:initial-stability-3}.  Then, provided that $0<\tau <\tau_0$, with some $\tau_0$ sufficiently small,
	\begin{align}
\max_{1 \le m \le M-1} \left(\norm{\nabla\eh^{\phi,m+1}}{}^2 + \norm{\eh^{\bu,m+1}}{}^2\right) +&\, \tau\sum_{m=1}^{M-1} \left(\norm{\nabla\eh^{\mu,m+\hf}}{}^2 + \norm{\nabla\bar{\e}_h^{\bu,m+\hf}}{}^2\right) \le C(T)(\tau^4+h^{2q})  , 
	\end{align}
for some $C(T)>0$ that is independent of $\tau$ and $h$.
	\end{thm}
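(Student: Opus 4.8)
The plan is to convert the one-step estimate of Lemma~\ref{lem:error-rhs-eh-control} into a global bound by summation followed by a discrete Gronwall argument. First I would multiply that inequality by $2\tau$ and apply $\sum_{m=1}^{\ell}$ for an arbitrary $1\le\ell\le M-1$. The first two left-hand terms telescope to $\varepsilon\left(\norm{\nabla\eh^{\phi,\ell+1}}{}^2 - \norm{\nabla\eh^{\phi,1}}{}^2\right)$ and $\frac{1}{\gamma}\left(\norm{\eh^{\bu,\ell+1}}{}^2 - \norm{\eh^{\bu,1}}{}^2\right)$. Because the prescribed initial data $\phih^0 = R_h\phi_0$, $\phih^1 = R_h\phi(\tau)$, $\buh^0 = {\bf P}_h\bu_0$, $\buh^1 = {\bf P}_h\bu(\tau)$ force $\eh^{\phi,0} = \eh^{\phi,1} = 0$ and $\eh^{\bu,0} = \eh^{\bu,1} = 0$, the subtracted contributions vanish. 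The terms $\varepsilon\tau\sum\norm{\nabla\eh^{\mu,m+\hf}}{}^2$ and $\frac{\eta}{2\gamma}\tau\sum\norm{\nabla\bar{\e}_h^{\bu,m+\hf}}{}^2$ are exactly the dissipative quantities the theorem controls and stay on the left.

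The distinctive term is the one with the extra factor $\tau^2$, namely $\frac{\varepsilon\tau^2}{4}\aiprd{\ddtau\eh^{\phi,m}}{\dtau\eh^{\phi,m+\hf}}$, which reflects the modified Crank--Nicholson splitting and is the main technical point. Writing $a_m := \nabla\eh^{\phi,m}$ and $d_m := a_{m+1}-a_m$, this term equals $\frac{\varepsilon}{4\tau}\iprd{d_m - d_{m-1}}{d_m}$; after the $2\tau$-scaling and the identity $\iprd{d_m - d_{m-1}}{d_m} = \tfrac12\norm{d_m}{}^2 - \tfrac12\norm{d_{m-1}}{}^2 + \tfrac12\norm{d_m - d_{m-1}}{}^2$, the summed contribution collapses to $\frac{\varepsilon}{4}\norm{d_\ell}{}^2 - \frac{\varepsilon}{4}\norm{d_0}{}^2 + \frac{\varepsilon}{4}\sum\norm{d_m - d_{m-1}}{}^2$. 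Since $d_0 = \nabla\eh^{\phi,1} - \nabla\eh^{\phi,0} = 0$, this entire quantity is nonnegative and may be retained on the left (or simply discarded); this is precisely where vanishing of the initial errors is essential.

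Next I would accumulate the consistency term. Multiplying $\mathcal{R}^{m+\hf}$ from \eqref{eq:consistency} by $\tau$ and summing telescopes the temporal integrals: the $\tau^3\int$-type contributions produce $\tau^4\int_0^T(\cdots)\,ds$; the $\frac{h^{2q+2}}{\tau}\int$-type contributions produce $h^{2q+2}\int_0^T(\cdots)\,ds$; and the $h^{2q}\left|\cdot\right|^2$-type contributions produce $h^{2q}\tau\sum\left|\cdot\right|^2$, a Riemann sum bounded by $h^{2q}\int_0^T(\cdots)\,ds$ up to a constant. Invoking the regularity Assumption~\ref{asmp:initial-stability-3} (in particular $\phi\in H^3(0,T;L^2)\cap H^2(0,T;H^3)$, $\bu\in H^2(0,T;{\bf L}^2)\cap H^1(0,T;{\bf H}^{q+1})$, $\mu\in L^2(0,T;H^{q+1})$, and $p\in L^\infty(0,T;H^q)$) renders every resulting integral finite, so that $\tau\sum_{m=1}^{\ell}\mathcal{R}^{m+\hf}\le C(T)(\tau^4 + h^{2q})$, using $h^{2q+2}\le h^{2q}$.

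Collecting these facts yields, for every $1\le\ell\le M-1$,
\begin{align*}
&\frac{\varepsilon}{2}\norm{\nabla\eh^{\phi,\ell+1}}{}^2 + \frac{1}{2\gamma}\norm{\eh^{\bu,\ell+1}}{}^2 + \varepsilon\tau\sum_{m=1}^{\ell}\norm{\nabla\eh^{\mu,m+\hf}}{}^2 + \frac{\eta}{2\gamma}\tau\sum_{m=1}^{\ell}\norm{\nabla\bar{\e}_h^{\bu,m+\hf}}{}^2 \\
&\qquad \le C\tau\sum_{m=0}^{\ell}\left(\norm{\nabla\eh^{\phi,m}}{}^2 + \norm{\eh^{\bu,m}}{}^2\right) + C(T)(\tau^4 + h^{2q}),
\end{align*}
where the $m=\ell$ instance of $\norm{\nabla\eh^{\phi,m+1}}{}^2$ on the right has been absorbed into the left using $C\tau\le\varepsilon/2$. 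This absorption fixes $\tau_0 := \varepsilon/(2C)$ and is the only place smallness of $\tau$ is required; once it is performed, the top-index unknown no longer appears on the right. I would then apply a standard discrete Gronwall inequality from Appendix~\ref{appen:A}, using once more $\eh^{\phi,0}=\eh^{\phi,1}=0$ and $\eh^{\bu,0}=\eh^{\bu,1}=0$, and take the maximum over $\ell$ to conclude the stated estimate with a constant of the form $C(T)=Ce^{CT}$. The main obstacle throughout is the correct handling of the $\tau^2$-weighted term in the second paragraph; the remaining pieces are telescoping, absorption, and routine truncation-error bookkeeping.
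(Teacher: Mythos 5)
Your proposal is correct and follows essentially the same route as the paper's own proof: sum the one-step estimate of Lemma~\ref{lem:error-rhs-eh-control}, exploit $\eh^{\phi,0}=\eh^{\phi,1}=0$ and $\eh^{\bu,0}=\eh^{\bu,1}=0$ so the telescoped initial terms and the $\tau^2$-weighted term (handled via the same polarization identity, appearing in the paper as the $\frac{1}{8\tau}$ difference terms) are harmless, bound $\tau\sum_m\mathcal{R}^{m+\hf}\le C(\tau^4+h^{2q})$, absorb the top-index term $\norm{\nabla\eh^{\phi,\ell+1}}{}^2$ into the left for $\tau\le\tau_0$ (the paper's $\frac{1}{1-C_4\tau}\le 2$ step), and finish with the discrete Gronwall inequality of Lemma~\ref{lem-discrete-gronwall}. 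No gaps.
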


	\begin{proof}
Using Lemma~\ref{lem:error-rhs-eh-control}, we have
	\begin{align}
\frac{1}{2 \tau} & \,\left( \norm{\nabla \eh^{\phi,m+1}}{}^2 - \norm{\nabla \eh^{\phi,m}}{}^2 \right) + \frac{1}{2 \tau \gamma} \left(\norm{ \eh^{\bu,m+1}}{}^2 - \norm{\eh^{\bu,m}}{}^2\right) +  \frac14 \norm{\nabla\eh^{\mu,m+\hf}}{}^2 \hspace{0.6in}&
	\nonumber
	\\
&+ \frac{\eta}{4\gamma} \, \norm{\nabla\bar{\e}_h^{\bu,m+\hf}}{}^2 + \frac{1}{8 \tau} \left( \norm{\nabla \eh^{\phi,m+1} - \nabla \eh^{\phi,m}}{}^2 - \norm{\nabla \eh^{\phi,m} - \nabla \eh^{\phi,m-1}}{}^2 \right) 
  	\nonumber
  	\\
\le& \, C \norm{\nabla \eh^{\phi, m+1}}{}^2 + C \norm{\nabla \eh^{\phi, m}}{}^2  +  C \norm{\nabla \eh^{\phi, m-1}}{}^2 + C \norm{\eh^{\bu,m}}{}^2 + C \norm{\eh^{\bu,m-1}}{}^2 
	\nonumber
	\\
&+ C\mathcal{R}^{m+\hf}.
	\label{eq:soch-error-sum}
	\end{align}
Now, applying $\tau\sum_{m=1}^\ell$ to \eqref{eq:soch-error-sum}, and observing that $\eh^{\phi,m}\equiv 0$ and $\eh^{\bu,m}\equiv {\bf 0}$, for $m = 0,1$, leads to 
	\begin{align}
\norm{\nabla\eh^{\phi,\ell+1}}{}^2 +& \ \norm{\eh^{\bu,\ell+1}}{}^2 + \frac{\tau}{2} \sum_{m=1}^{\ell} \left(\norm{\nabla\eh^{\mu,m+\hf}}{}^2 + \norm{\nabla \bar{\e}_h^{\bu,m+\hf}}{}^2\right) 
	\nonumber
	\\
& \le  C_3\tau\sum_{m=1}^\ell \mathcal{R}^{m+\hf} + C_4\tau\sum_{m=2}^{\ell+1}\norm{\nabla\eh^{\phi,m}}{}^2 + C_5\tau\sum_{m=2}^{\ell}\norm{\eh^{\bu,m}}{}^2.
	\label{eq:soch-estimate-before-gronwall}
	\end{align}
If $0< \tau \le \tau_0:= \frac{1}{2C_4} < \frac{1}{C_4}$, since $1\le \frac{1}{1-C_4\tau} \le 2$, it follows from the last estimate that
	\begin{align}
\norm{\nabla\eh^{\phi,\ell+1}}{}^2 +& \norm{\eh^{\bu,\ell+1}}{}^2 + \frac{\tau}{2} \sum_{m=1}^{\ell} \left(\norm{\nabla\eh^{\mu,m+\hf}}{}^2 + \norm{\nabla \bar{\e}_h^{\bu,m+\hf}}{}^2\right)  
	\nonumber
	\\
  \le & \, \frac{C_3\tau}{1-C_4\tau}\sum_{m=1}^\ell \mathcal{R}^{m+\hf} + \frac{C_4\tau}{1-C_4\tau} \sum_{m=2}^{\ell}\norm{\nabla\eh^{\phi,m}}{}^2 + \frac{C_5\tau}{1-C_4\tau} \sum_{m=2}^{\ell}\norm{\nabla\eh^{\bu,m}}{}^2
	\nonumber
	\\
\le & \, 2C_3C_6(\tau^4+h^{2q}) + 2C_7 \tau \sum_{m=2}^{\ell}\left( \norm{\nabla\eh^{\phi,m}}{}^2 +  \norm{\nabla\eh^{\bu,m}}{}^2\right),
	\label{eq:soch-pre-gronwall}
	\end{align}
where we have used the fact that $\tau\sum_{m=1}^{M-1} \mathcal{R}^{m+\hf}\le C_6(\tau^4+h^{2q})$ and where $C_7 := \max\left(C_4,C_5\right)$. Appealing to the discrete Gronwall inequality \ref{lem-discrete-gronwall}, it follows that, for any 	\begin{equation}
\norm{\nabla\eh^{\phi,\ell+1}}{}^2 + \norm{\eh^{\bu,\ell+1}}{}^2 + \frac{\tau}{2} \sum_{m=1}^{\ell} \left(\norm{\nabla\eh^{\mu,m+\hf}}{}^2 + \norm{\nabla \bar{\e}_h^{\bu,m+\hf}}{}^2\right)\le 2C_3C_6(\tau^4+h^{2q})\exp(2C_7T),
	\end{equation}
for any $1\le \ell \le M-1$.

	\end{proof}
	
	\begin{rem}
From here it is straightforward to establish an optimal error estimate of the form
	\begin{align}
\max_{1\le m \le M-1} \left( \norm{\nabla\e^{\phi,m+1}}{}^2 + \norm{\e^{\bu,m+1}}{}^2 \right) + \tau\sum_{m=1}^{M-1} \left( \norm{\nabla\e^{\mu,m+\hf}}{}^2 + \norm{\nabla\bar{\e}^{\bu,m+\hf}}{}^2  \right) \le C(T)(\tau^4+h^{2q}) 
	\end{align}
using $\ephi = \eAphi + \ehphi$, \emph{et cetera}, the triangle inequality, and the standard spatial approximations. We omit the details for the sake of brevity.
	\end{rem}
	
	\section*{Acknowledgment}
This work is supported in part by the grants NSF DMS-1418689 (C.~Wang), NSFC 11271281 (C.~Wang), NSF DMS-1418692 (S.~Wise), NSF DMS-1008852 (X.~Wang), and NSF DMS-1312701 (X.~Wang). 
	
	\appendix
	
	\section{Some Discrete Gronwall Inequalities}
	\label{appen:A}
	
We will need the following discrete Gronwall inequality cited in ~\cite{heywood90,layton08}:
	\begin{lem}	
	\label{lem-discrete-gronwall}
Fix $T>0$. Let $M$ be a positive integer, and define $\tau \le \frac{T}{M}$. Suppose $\left\{a_m\right\}_{m=0}^M$, $\left\{b_m\right\}_{m=0}^M$ and $\left\{c_m\right\}_{m=0}^{M-1}$ are non-negative sequences such that $\tau\sum_{m=0}^{M-1} c_m \le C_1$, where $C_1$ is independent of $\tau$ and $M$.  Further suppose that, 
	\begin{equation}
a_\ell + \tau \sum_{m=0}^{\ell} b_m \le C_2 +\tau \sum_{m=0}^{\ell-1} a_m c_m ,  \quad \forall \, 1\le \ell \le  M,	\label{eq:gronwall-assumption}
	\end{equation}
where $C_2>0$ is a constant independent of $\tau$ and $M$.  Then, for all $\tau>0$, 
	\begin{equation}
a_\ell +\tau \sum_{m=0}^{\ell} b_m \le C_2 \exp \left(\tau\sum_{m=0}^{\ell-1}c_m \right) \le C_2\exp(C_1) , \quad \forall \, 1\le \ell \le  M . 
	\label{eq:gronwall-conclusion}
	\end{equation}
	\end{lem}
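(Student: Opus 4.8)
The plan is to reduce the conclusion \eqref{eq:gronwall-conclusion} to a telescoping estimate on the partial sums appearing on the right-hand side of \eqref{eq:gronwall-assumption}; this is the standard argument for discrete Gronwall inequalities of lagged type. First I would introduce the nondecreasing quantity
\[
S_\ell := C_2 + \tau \sum_{m=0}^{\ell-1} a_m c_m , \qquad 0 \le \ell \le M ,
\]
with the empty-sum convention $S_0 = C_2$. Because the sequence $\{b_m\}$ is non-negative, discarding the term $\tau \sum_{m=0}^{\ell} b_m$ from \eqref{eq:gronwall-assumption} yields the pointwise bound $a_\ell \le S_\ell$ for every $1 \le \ell \le M$; combined with $a_0 \le C_2 = S_0$ (the natural normalization, which holds in the intended application, where in fact $a_0 = 0$), this gives $a_\ell \le S_\ell$ for all $0 \le \ell \le M-1$.

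I would then exploit the defining recursion $S_{\ell+1} = S_\ell + \tau a_\ell c_\ell$. Inserting $a_\ell \le S_\ell$ and using $c_\ell \ge 0$ produces the one-step growth estimate
\[
S_{\ell+1} \le (1 + \tau c_\ell)\, S_\ell , \qquad 0 \le \ell \le M-1 .
\]
Iterating from $\ell = 0$ and recalling $S_0 = C_2$ gives the product bound $S_\ell \le C_2 \prod_{m=0}^{\ell-1} (1 + \tau c_m)$. Applying the elementary inequality $1 + x \le e^{x}$ factor by factor, together with the hypothesis $\tau \sum_{m=0}^{M-1} c_m \le C_1$, I obtain
\[
S_\ell \le C_2 \exp\!\left( \tau \sum_{m=0}^{\ell-1} c_m \right) \le C_2 \exp(C_1) .
\]
Chaining the bound $a_\ell + \tau \sum_{m=0}^{\ell} b_m \le S_\ell$ from \eqref{eq:gronwall-assumption} with this last display then yields \eqref{eq:gronwall-conclusion}, for all $1 \le \ell \le M$.

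I do not expect a genuine obstacle, as the estimate is elementary; the single point deserving attention is that the nonlinear sum on the right-hand side of \eqref{eq:gronwall-assumption} is \emph{lagged}, terminating at index $\ell-1$ rather than $\ell$. Since $a_\ell$ consequently never appears on the right, the recursion for $S_\ell$ retains the clean multiplicative form $(1 + \tau c_\ell)\, S_\ell$, and at no stage must one divide by a factor $1 - \tau c_\ell$. This is precisely why the conclusion is valid for all $\tau > 0$, with no smallness restriction of the type $\tau c_\ell < 1$ --- a feature that is essential when the lemma is applied to \eqref{eq:soch-pre-gronwall} in the proof of Theorem~\ref{thm:soch-error-estimate}, where $\tau$ is not assumed small relative to the accumulated constants. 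The only bookkeeping subtlety is the treatment of the initial index, which I would record explicitly as $a_0 \le C_2$ (trivial in the application, since $a_0 = 0$).
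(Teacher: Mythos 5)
The paper itself gives no proof of this lemma---it is quoted from \cite{heywood90,layton08} and stated without argument---so there is nothing internal to compare against; your telescoping/product argument is the standard proof and is correct. It also matches in spirit the proof the authors do write out for the companion Lemma~\ref{lem-discrete-gronwall-2}: there an induction produces products of factors $(1+\tau\alpha^k c_m)$ that are converted to exponentials via $1+x\le e^x$, exactly as you convert $\prod_{m=0}^{\ell-1}(1+\tau c_m)$; the weighted sum $\sum_{j=0}^m\alpha^{m-j}a_j$ is what forces their extra constant $A_\alpha=(1-\alpha)^{-1}$, bookkeeping your lagged case avoids. The one substantive point is the caveat you flagged yourself: hypothesis \eqref{eq:gronwall-assumption} is imposed only for $1\le\ell\le M$, so $a_0$ appears on the right-hand side at $\ell=1$ but is never itself bounded, and without something like $a_0\le C_2$ the stated conclusion is actually false --- take $M=1$, $\tau=c_0=1$, $b_0=b_1=0$, $C_2=1$, $a_0=100$, $a_1=101$: then \eqref{eq:gronwall-assumption} holds with equality and $\tau c_0\le C_1=1$, yet $a_1=101>C_2\exp(C_1)=e$. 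So your insertion of $a_0\le C_2$ is not optional normalization but a genuinely needed additional hypothesis (equivalently, one may require \eqref{eq:gronwall-assumption} to hold at $\ell=0$ as well); it is harmless where the lemma is applied in Theorem~\ref{thm:soch-error-estimate}, since there $\eh^{\phi,m}\equiv 0$ and $\eh^{\bu,m}\equiv{\bf 0}$ for $m=0,1$, so $a_0=0$. Beyond that your proof is complete, and your closing remark is exactly right: the lagged sum is what spares you any factor $(1-\tau c_m)^{-1}$, hence no smallness condition on $\tau$ is needed, consistent with the claim ``for all $\tau>0$'' in the statement.
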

Note that the sum on the right-hand-side of \eqref{eq:gronwall-assumption} must be explicit.

In addition, the following more general discrete Gronwall inequality is needed in the stability analysis. 
	\begin{lem}	
	\label{lem-discrete-gronwall-2}
Fix $T>0$. Let $M$ be a positive integer, and define $\tau \le \frac{T}{M}$. Suppose $\left\{a_m\right\}_{m=0}^M$, $\left\{b_m\right\}_{m=0}^M$ and $\left\{c_m\right\}_{m=0}^{M-1}$ are non-negative sequences such that $\tau\sum_{m=0}^{M-1} c_m \le C_1$, where $C_1$ is independent of $\tau$ and $M$.  Suppose that, for all $\tau>0$ and for some constant $0 < \alpha < 1$, 
	\begin{equation}
a_\ell + \tau \sum_{m=0}^{\ell} b_m \le C_2 +\tau \sum_{m=0}^{\ell-1} c_m \sum_{j=0}^m \alpha^{m-j} a_j ,  \quad \forall \, 1\le \ell \le  M , 
	\label{eq:gronwall-assumption-2}
	\end{equation}
where $C_2>0$ is a constant independent of $\tau$ and $M$.  Then, for all $\tau>0$, 
	\begin{equation}
a_\ell +\tau \sum_{m=0}^{\ell} b_m \le ( C_2 + a_0 C_1) \exp\left(\frac{C_1}{1-\alpha}\right) ,  \quad \forall \, 1\le \ell \le  M . 
	\label{eq:gronwall-conclusion-2}
	\end{equation}
	\end{lem}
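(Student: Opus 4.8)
The plan is to reduce the generalized inequality \eqref{eq:gronwall-assumption-2} to the standard discrete Gronwall inequality of Lemma~\ref{lem-discrete-gronwall} by collapsing the inner geometric sum into a single running maximum. First I would set $v_\ell := a_\ell + \tau\sum_{m=0}^\ell b_m$ and record the elementary observation that $a_j \le v_j$ for every $j\ge 0$, since the $b_m$ are non-negative. The only structural novelty compared with Lemma~\ref{lem-discrete-gronwall} is the \emph{memory} term $\sum_{j=0}^m \alpha^{m-j} a_j$, and the whole point is to exploit the summability of the geometric weights, $\sum_{k=0}^\infty \alpha^k = (1-\alpha)^{-1} < \infty$, to replace this convolution by one maximum.

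Next I would isolate the $j=0$ contribution, writing $\sum_{j=0}^m \alpha^{m-j} a_j = \alpha^m a_0 + \sum_{j=1}^m \alpha^{m-j} a_j$. After multiplying by $\tau c_m$ and summing, the first piece is bounded by $a_0\,\tau\sum_{m=0}^{M-1} c_m \le a_0 C_1$ because $\alpha^m\le 1$; this is exactly the origin of the $a_0 C_1$ term in the conclusion. For the remaining piece I would substitute $a_j\le v_j$ and introduce $V_\ell := \max_{1\le j\le \ell} v_j$, so that for $m\ge 1$ we have $\sum_{j=1}^m \alpha^{m-j} v_j \le V_m \sum_{j=1}^m \alpha^{m-j} \le (1-\alpha)^{-1} V_m$ (the $m=0$ term vanishes since its inner sum is empty). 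Combining these steps with \eqref{eq:gronwall-assumption-2} yields, with $\hat C_2 := C_2 + a_0 C_1$,
\[
v_\ell \le \hat C_2 + \frac{1}{1-\alpha}\,\tau\sum_{m=1}^{\ell-1} c_m V_m , \qquad 1\le\ell\le M.
\]

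Then I would observe that the right-hand side above is non-decreasing in $\ell$, so each $v_j$ with $1\le j\le\ell$ is bounded by the right-hand side evaluated at $\ell$; taking the maximum over $j$ gives the self-contained inequality $V_\ell \le \hat C_2 + (1-\alpha)^{-1}\tau\sum_{m=1}^{\ell-1} c_m V_m$. This is precisely of the form treated by Lemma~\ref{lem-discrete-gronwall}, now with the sequence $V_\ell$ in place of $a_\ell$, with $b_m\equiv 0$, with constant $\hat C_2$, and with the modified weights $c_m/(1-\alpha)$ whose weighted sum still satisfies $\tau\sum_{m=0}^{M-1} c_m/(1-\alpha)\le C_1/(1-\alpha)$. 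Applying that lemma gives $V_\ell \le (C_2 + a_0 C_1)\exp\!\big(C_1/(1-\alpha)\big)$, and since $v_\ell\le V_\ell$ this is exactly \eqref{eq:gronwall-conclusion-2}.

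I expect the main obstacle to be handling the double (convolution) sum cleanly: the tempting move is to interchange the order of summation, but that leaves a tail sum $\sum_{m=j}^{\ell-1}\alpha^{m-j} c_m$ that cannot be controlled termwise without further knowledge of $\{c_m\}$. The decisive idea is instead to dominate the inner sum by $(1-\alpha)^{-1}$ times the running maximum $V_m$, which both decouples the two summations and, crucially, preserves the single-sum structure required by the classical lemma. The separate bookkeeping of the $a_0$ term, handled via $\alpha^m\le 1$, is what forces the $a_0 C_1$ correction in the final constant.
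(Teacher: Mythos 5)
Your proof is correct, and it takes a genuinely different route from the paper's. The paper argues directly by induction on the recursion, establishing a product-form bound $a_\ell + \tau\sum_{m=0}^{\ell} b_m \le \prod_{m=1}^{\ell} d_{\ell,m}$ with factors $d_{\ell,m} = \prod_{k=0}^{m-1}(1+\tau\alpha^k c_m)$ for $m < \ell$ and $d_{\ell,\ell} = C_2 + a_0\tau\sum_{k=0}^{\ell-1}c_k\alpha^k$, and then estimates each factor by an exponential using the geometric series $1+\alpha+\cdots+\alpha^{m-1}\le (1-\alpha)^{-1}$; it never invokes Lemma~\ref{lem-discrete-gronwall}. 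You instead \emph{reduce} the statement to Lemma~\ref{lem-discrete-gronwall}: you split off the $j=0$ contribution (bounded by $a_0C_1$ via $\alpha^m\le 1$), dominate the geometric convolution $\sum_{j=1}^m \alpha^{m-j}a_j$ by $(1-\alpha)^{-1}V_m$ with the running maximum $V_m := \max_{1\le j\le m}v_j$, and observe that, because the right-hand side is monotone in $\ell$, the resulting inequality closes in $V_\ell$ itself with weights $c_m/(1-\alpha)$ — at which point the classical lemma applies verbatim and yields exactly the constant $(C_2+a_0C_1)\exp\left(C_1/(1-\alpha)\right)$. All the steps check out (in particular, setting $V_0:=0$ handles the indexing when invoking Lemma~\ref{lem-discrete-gronwall}, and the empty inner sum at $m=0$ is correctly noted). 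What each approach buys: yours is more modular and transparent, exposing the mechanism (geometric summability collapses the memory term into a single maximum) and outsourcing the Gronwall bookkeeping to the already-stated lemma; the paper's is self-contained and keeps the sharper intermediate product bounds, though the final constants coincide.
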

	
	\begin{proof} 
We set $A_\alpha := \frac{1}{1-\alpha} > 1$.  A careful application of induction, using (\ref{eq:gronwall-assumption-2}), yields the following inequality: 
	\begin{equation}
a_\ell +\tau \sum_{m=0}^{\ell} b_m \le \prod_{m=1}^\ell d_{\ell,m} , \quad \forall \, 1\le \ell \le  M,
	\end{equation}
where
 
   \begin{equation}
d_{\ell,m} = \left\{
	\begin{array}{ccc}
\prod_{k=0}^{m-1}( 1 + \tau \alpha^k c_m ) & \mbox{if} &  1\le m\le \ell-1
	\\
C_2 + a_0 \tau \sum_{k=0}^{\ell-1} c_k \alpha^k  & \mbox{if} & m =\ell
	\end{array}
\right. .
	\label{lem A 2-1}
	\end{equation}
Meanwhile, the following bound is available: 
	\begin{align} 
d_{\ell,m} = & \ ( 1 + \tau c_m )  ( 1 + \alpha \tau c_m ) \cdots ( 1 + \alpha^{m-1} \tau c_m ) 
	\nonumber
	\\	
\le & \ \exp (  \tau c_m )   \exp (  \alpha \tau c_k ) \cdots \exp (  \alpha^{m-1} \tau c_m )  
	\nonumber 
	\\
= & \  \exp \left(  \tau ( 1 + \alpha + \cdots + \alpha^{m-1} ) c_m \right) 
    \le \exp \left(  A_\alpha c_m \tau \right) ,  \quad \forall \, 1\le m < \ell-1 , 
    \label{lem A 2-2} 
	\end{align}
which in turn leads to 
	\begin{align} 
d_{\ell,1}  d_{\ell,2} \cdots  d_{\ell,\ell-1}  \le & \ {\rm e}^{A_\alpha c_1 \tau}  {\rm e}^{A_\alpha c_2 \tau} \cdots  {\rm e}^{A_\alpha c^{\ell-1} \tau}   
	\nonumber 
	\\
\le & \ \exp \left( A_\alpha \tau ( c_1 + c_2 + \cdots + c_{\ell-1} ) \right) \le \exp( A_\alpha C_1 ) . 
    \label{lem A 2-3} 
	\end{align}
On the other hand, we also have 
	\begin{align} 
d_{\ell,\ell} = & \ C_2 + a_0 \tau \left( c_0 + c_1 \alpha + \cdots + c_{\ell-1} \alpha^{\ell-1} \right)  
	\nonumber
	\\
\le & \  C_2 + a_0 \tau \left( c_0 + c_1 + \cdots + c^{\ell-1} \right) \le C_2 + a_0 C_1 .  
	\label{lem A 2-4} 
	\end{align}  
In turn, a substitution of (\ref{lem A 2-3}) and (\ref{lem A 2-4}) into (\ref{lem A 2-1}) results in (\ref{eq:gronwall-conclusion-2}), the desired estimate. The proof of Lemma~\ref{lem-discrete-gronwall-2} is complete. 
\end{proof}

\bibliographystyle{plain}
	\bibliography{SOCHNS}

\end{document}